\def\argmin{\mathop{\rm arg\,min}}
\definecolor{prostate}{rgb}{0,0.4470,0.7410}
\definecolor{nodes}{rgb}{0.4940,0.1840,0.5560}
\definecolor{rectum}{rgb}{0.8500,0.3250,0.0980}
\definecolor{bladder}{rgb}{0.9290,0.6940,0.1250}
\definecolor{femheads}{rgb}{0.6350,0.0780,0.1840}
\newtheorem{theorem}{Theorem}
\newtheorem{lemma}{Lemma}
\newtheorem{definition}{Definition}
\begin{document}

\title{A nonconvex optimization approach to IMRT planning with dose--volume constraints}
\author{Kelsey Maass \\ \small{Department of Applied Mathematics, University of Washington} \\[1ex]
Minsun Kim \\ \small{Department of Radiation Oncology, University of Washington} \\[1ex]
Aleksandr Aravkin \\ \small{Department of Applied Mathematics, University of Washington}}
\date{}

\maketitle

\abstract{Fluence map optimization for intensity-modulated radiation therapy planning can be formulated as a large-scale inverse problem with competing objectives and constraints associated with the tumors and organs-at-risk.
Unfortunately, clinically relevant dose--volume constraints are nonconvex, so standard algorithms for convex problems cannot be directly applied.
While prior work focused on convex approximations for these constraints, we propose a novel relaxation approach to handle nonconvex dose--volume constraints.
We develop efficient, provably convergent algorithms based on partial minimization, and show how to adapt them to handle maximum-dose constraints and infeasible problems.
We demonstrate our approach using the CORT dataset, and show that it is easily adaptable to radiation treatment planning with dose--volume constraints for multiple tumors and organs-at-risk.}

\section{Introduction}
\label{sec:intro}

Cancer is one of the leading causes of death worldwide, with approximately 38.4\% of the population expected to develop cancer at some point in their lives \cite{nci2018stats}.
Many cancer therapies target the weakened DNA damage response of cancer cells; in particular, radiotherapy uses high-energy ionizing radiation to damage cancer cell DNA \cite{o2015targeting, nci2019radiation}.
Unfortunately, this radiation also damages the healthy cells in its path, so the main goal of radiation treatment planning is to maximize the differential between damage to the tumor and to nearby healthy tissue.
The most common form of radiotherapy is external beam radiation therapy (EBRT), where photon beams generated by a linear accelerator are directed at a patient from a rotating gantry \cite{nci2018external}.
By surrounding a tumor with beams from multiple angles, physicians can design treatment plans that target tumors with a variety of geometries while avoiding a large dose to critical structures. 

One way to limit damage to healthy tissue is to require that treatment plans satisfy various constraints on the amount of radiation delivered to organs-at-risk (OARs).
For example, it is often clinically relevant to impose maximum-dose constraints on a fraction of an organ, known as a {\it dose--volume constraint}.
Unfortunately, these constraints are nonconvex, so convex formulations and algorithms cannot be directly applied to the treatment planning problem.
We propose a novel approach to handle dose--volume constraints for intensity-modulated radiation therapy (IMRT) while preserving their nonconvex structure, as opposed to previous efforts that focused on convex approximations.
The proposed method is amenable to efficient algorithms based on partial minimization and naturally adapts to handle maximum-dose constraints and cases of infeasiblity.
We demonstrate our approach using the CORT dataset \cite{craft2014shared, craft2014shared2}, and we show that it easily adapts to radiation treatment planning with dose--volume constraints for multiple tumors and OARs.
To start off, we give a brief overview of EBRT, including IMRT, and the specific challenge posed by dose--volume constraints.

\subsection{External beam radiation therapy}

Conventional EBRT began soon after the discovery of X-rays in 1895 and was significantly impacted by technological innovations such as advanced imaging and multileaf collimators (MLC) \cite{bortfeld2006imrt}.
Specifically, the invention of computed tomography (CT) in the 1970s enabled accurate three-dimensional mapping of a patient's internal anatomy, while the introduction of the MLC to radiotherapy in the 1990s provided more control over beam shape and fluence.
For example, three-dimensional conformal radiation therapy (3D-CRT) uses the sliding leaves of the MLC to shape beam outlines to match tumor geometries, reducing radiation exposure to healthy tissue, while IMRT uses MLCs to not only shape beam outlines, but also modulate beam fluence, giving additional control over the patient dose distribution \cite{webb2003physical}.

In the early history of radiotherapy, treatment plans were created using a trial-and-error approach called {\it forward planning}, where physicians would calculate dose distributions for different sets of beam parameters until an acceptable treatment plan was found.
However, modulating beam fluence for IMRT involves discretizing each beam into subunits, called beamlets or bixels, which dramatically increases the number of variables under consideration, rendering forward planning infeasible \cite{bortfeld1999optimized, ehrgott2010mathematical}.
In the 1980s, the idea of {\it inverse planning} for IMRT was introduced by Brahme \cite{brahme1982solution, brahme1988optimization}, where mathematical optimization was used to calculate beam parameters to deliver a specified dose distribution.

Within radiotherapy inverse planning, there are a variety of different paradigms regarding dose objectives, mathematical models, and optimization methods.
For instance, while our proposed model and many others seek to optimize dose distributions defined in terms of {\it physical criteria} such as dose and volume \cite{bortfeld1999optimized, chui2001inverse}, it is also possible to consider {\it biological criteria} such as tumor control probability (TCP) and normal-tissue complication probability (NTCP) \cite{alber1999objective}, equivalent uniform dose (EUD) \cite{thieke2003physical, olafsson2005optimization, craft2007approach}, and biologically effective dose (BED) \cite{saberian2016optimal, saberian2016theoretical}.
Additionally, many early approaches to inverse planning used analytical {\it direct methods} inspired by the similarities between IMRT planning and CT image reconstruction \cite{cormack1987problem, bortfeld1990methods, holmes1991unified}, but the trend has since moved towards approximate {\it iterative methods} \cite{xing1996iterative, xing1998fast, shepard2000iterative}.

Amongst iterative methods, the presence of nonconvexity within many IMRT models has fueled the debate between {\it stochastic methods} such as simulated annealing \cite{webb1989optimisation, webb1992optimization, mageras1993application} and genetic algorithms \cite{wu2000selection, ahmad2010genetic}, which can escape local minima but tend to have higher computational costs, versus {\it gradient-based methods} using techniques such as Newton's method \cite{bortfeld1990methods, wu2000algorithms}, L-BFGS \cite{lahanas2003multiobjective, pflugfelder2008comparison}, and conjugate gradient methods \cite{spirou1998gradient, alber2002degeneracy}, which tend to be faster but can get trapped in local minima.
Our method uses a projected gradient approach to handle issues related to nonconvexity, which we discuss in Section~\ref{sec:dvc}.
For a more detailed history of radiotherapy and IMRT, see \cite{bortfeld2006imrt, webb2003physical, bortfeld1999optimized, ehrgott2010mathematical, chui2001inverse, shepard1999optimizing}.

\subsection{Intensity-modulated radiation therapy}

In current practice, the patient undergoes imaging (e.g., CT) before IMRT treatment, producing a three-dimensional model of their internal anatomy.
This model is discretized into volume units (voxels) and labeled by structure.
Next, the physician specifies the prescription, often in terms of a uniform target to the tumor with various constraints on critical structures.
The goal of the treatment planner is then to determine the number of beams, beam angles, and beamlet intensities that will deliver the prescribed dose of radiation to the tumor while keeping doses to healthy tissue low.
Inverse planning for IMRT typically involves three steps \cite{ehrgott2010mathematical}:
\begin{enumerate}
    \item Beam angles: Determine the number of beams and their orientations.
    \item Fluence map: Calculate beamlet intensities to deliver the desired dose distribution to the patient.
    \item Segmentation: Design MLC sequences that achieve the optimized fluence map.
\end{enumerate}
These steps require solving optimization problems that can be approached separately or in combination \cite{lee2003integer, ahmed2010automated, zhang2010two, bertsimas2013hybrid}.
In this paper, we focus on the fluence map optimization (FMO) problem of assigning beamlet intensities for a given set of beam angles.

The FMO problem has been approached using a number of different mathematical frameworks including linear programming \cite{langer1987optimization, zaghian2014automatic, kishimoto2018successive}, piecewise linear--quadratic models \cite{cho1998optimization, romeijn2006new, fu2019convex}, quadratic programming \cite{wu2000algorithms, spirou1998gradient, cotrutz2002using}, and other nonlinear programming models \cite{alber1999objective, thieke2003physical, shepard2000iterative, llacer2001comparative}.
Using multi-objective optimization, many studies have generated solutions along the Pareto frontier to explore the trade-off between competing tumor and healthy tissue objectives and constraints \cite{craft2007approach, lahanas2003multiobjective, halabi2006dose}.
Other approaches include formulating the FMO problem as a dynamical system \cite{hou2003optimization} or eigenvalue problem \cite{crooks2001linear}, using a database of reference dose--volume histograms to guide optimization \cite{zarepisheh2014dvh}, and clustering beams and voxels to handle large-scale problems \cite{ungun2019real}.
We develop a new approach that is well adapted to handle nonconvex dose--volume constraints, in addition to convex maximum-dose and mean-dose constraints. We first explain all three constraint types. 

Radiation dose is measured in gray (Gy), where one Gy is defined as the absorption of one joule of energy per kilogram of matter.
Although lower doses are always better for healthy tissue, tolerable doses for many tissues are known empirically and depend on the risk level that a patient and physician are willing to accept.
In practice, tolerable doses for different tissues are formulated using the following three types of constraints:
\begin{itemize}
    \item Maximum dose: No voxel in the organ receives more than $d^\text{max}$ Gy.
    \item Mean dose: Average radiation per voxel does not exceed $d^\text{mean}$ Gy.
    \item Dose--volume: At most $p\%$ of the organ volume receives more than $d^\text{dv}$ Gy. 
\end{itemize}
Serial structures such as the spinal cord and brainstem lose functionality if any of their subvolumes are damaged, so maximum-dose constraints are a good indicator of tissue damage.
Maximum-dose and mean-dose constraints define a {\it convex} feasible region for the decision variables.
The associated optimization problems can be solved at scale; for example, inverse planning with linear inequality constraints can be formulated as a convex quadratic program, with state-of-the-art approaches and commercial implementations readily available (see e.g., \cite{gill2005snopt}).
On the other hand, parallel structures such as the liver and lungs are composed of semi-independent units.
In this case, a fraction of the parallel organ can be damaged without loss of functionality, so dose--volume constraints are appropriate \cite{spirou1998gradient}.
Assigning maximum-dose constraints on partial volumes to achieve the desired dose--volume criteria was first introduced by Langer and Leong in 1987 \cite{langer1987optimization} and brought to the attention of the mathematical community by Shepard et al. in 1999 \cite{shepard1999optimizing}.

\subsection{Challenge of dose--volume constraints}
\label{sec:dvc}

In contrast to maximum-dose and mean-dose constraints, dose--volume constraints are {\it combinatorial}, since the choice of the $p\%$ of organ voxels that may receive more than $d^\text{dv}$ Gy is left to the planner. 
In early IMRT research, using only a small number of beamlets and voxels, it was possible to solve the treatment planning problem for all possible voxel combinations satisfying a dose--volume constraint and then choose the best resulting plan \cite{langer1987optimization}.
This approach becomes infeasible as the problem size increases to clinically relevant situations: for an organ with $n$ voxels and a dose--volume constraint imposed on $m$ of these voxels, there are $\sfrac{n!}{(n-m)!m!}$ possible voxel combinations that would satisfy the constraint \cite{spirou1998gradient}.

To illustrate the nonconvex geometry of a feasible beamlet region induced by a dose--volume constraint, we recreate an example from Wu et al. \cite{wu2003method} using the CORT dataset described in Section~\ref{sec:results}.
We consider three body voxels of a patient, where voxel $u$ belongs to a prostate tumor and voxels $v_1$ and $v_2$ belong to the rectum (left panel of Figure~\ref{fig:simple}).
Using two beamlets $x_1$ and $x_2$, we aim to deliver a uniform dose of 81 Gy to the tumor while ensuring that {\it no more than 50\% of the rectum volume (one voxel in this case) exceeds 20 Gy.} 

We calculate the optimal beamlet intensities by minimizing the objective function 
\begin{equation}
    \label{eq:simpleObj}
    \frac{1}{2} \| A_1x - 81 \|_2^2 + \frac{\lambda}{2} \| x \|_2^2
\end{equation}
subject to the dose--volume constraint
\begin{equation}
    \label{eq:simpleConst}
    \| (A_2x - 20)_+ \|_0 \leq 1.
\end{equation}
The matrices $A_1 \in \mathbb{R}^{1 \times 2}$ and $A_2 \in \mathbb{R}^{2 \times 2}$ map beamlet intensities to prostate and rectum voxel doses, the vector $x \in \mathbb{R}^2$ contains the beamlet intensities, and the regularization term controlled by $\lambda = 5 \times 10^{-6}$ is added to stabilize beamlet solutions, amortizing the ill-conditioned beamlet-to-voxel map.
The dose--volume constraint~\eqref{eq:simpleConst} forces the number of rectum voxels receiving a dose greater than 20 Gy to be less than or equal to 1:
\begin{equation}
    \label{eq:dvcLong}
    \| (A_2 x - 20)_+ \|_0 = \|\max\{0,A_2x - 20\}\|_0 = \text{number of voxels s.t. } A_2x > 20,
\end{equation}
where the $\ell_0$-norm returns the number of nonzero elements in a vector (e.g., \cite{penfold2017sparsity}).
In Figure~\ref{fig:simple} (right), we plot the feasible beamlet region along with the contours of the objective function.
Since negative beamlet intensities are impossible, we restrict the beamlet values to the first quadrant. 

\begin{figure}
    \centering
    \begin{minipage}[c]{0.45\textwidth}
        \begin{overpic}[scale=0.45]{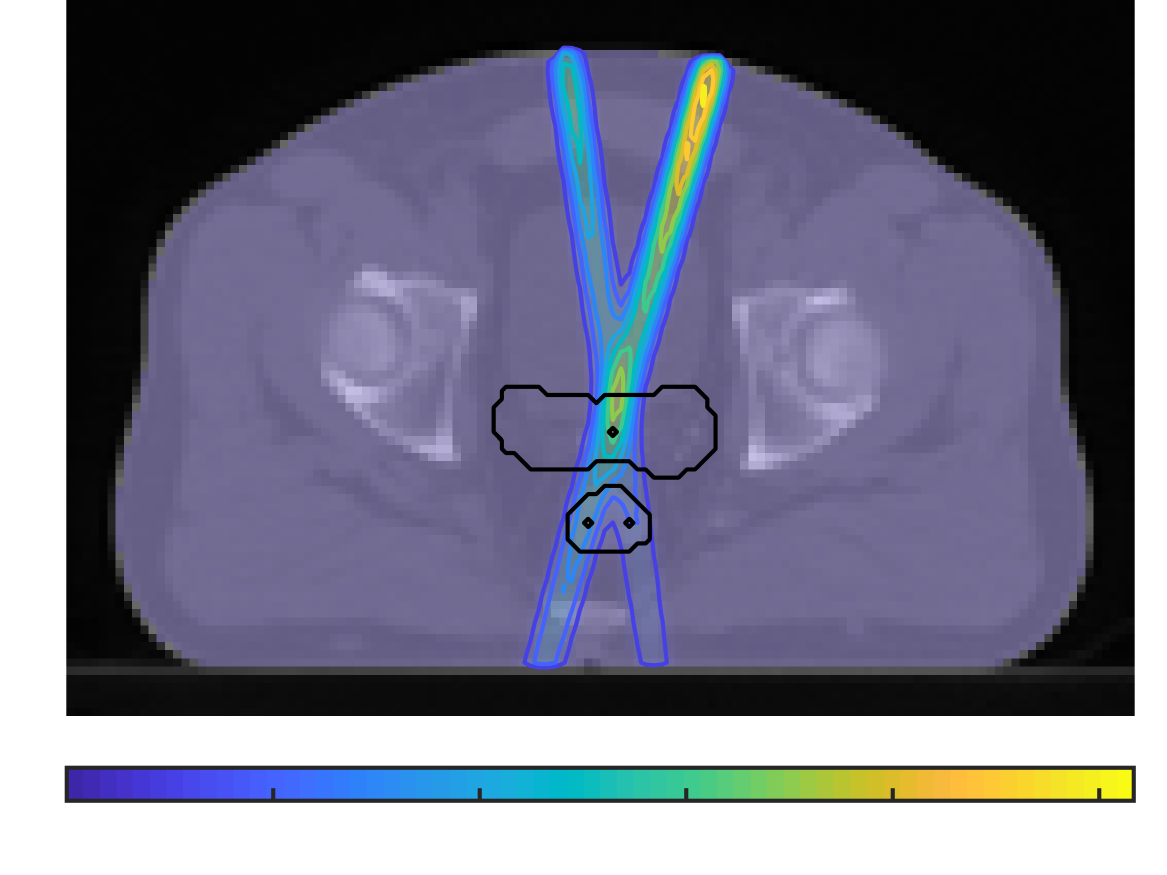}
            \put(43.3,30){\color{white}$v_1$}
            \put(56.4,30){\color{white}$v_2$}
            \put(46.5,37){\color{white}$u$}
            \put(46,72.15){\color{white}$x_2$}
            \put(60,71.5){\color{white}$x_1$}
            \put(5,3){\scriptsize0}
            \put(21.5,3){\scriptsize20}
            \put(39,3){\scriptsize40}
            \put(57,3){\scriptsize60}
            \put(74.5,3){\scriptsize80}
            \put(91.5,3){\scriptsize100}
            \put(43,-1){\scriptsize Dose (Gy)}
        \end{overpic}
    \end{minipage}
    \begin{minipage}[c]{0.45\textwidth} \hspace{5em}
        \begin{overpic}[scale=0.4]{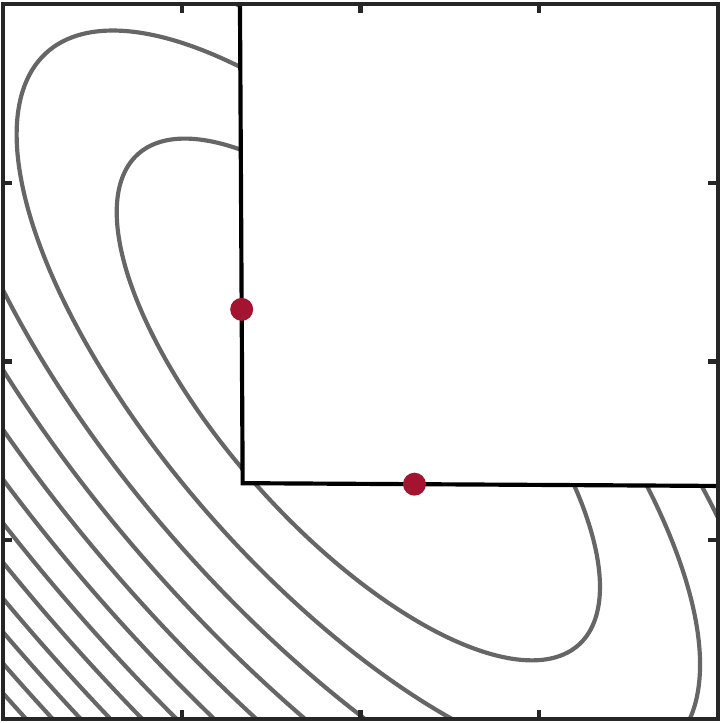}
            \put(-1,-5){\scriptsize0}
            \put(21,-5){\scriptsize0.5}
            \put(48.5,-5){\scriptsize1}
            \put(71,-5){\scriptsize1.5}
            \put(97,-5){\scriptsize2}
            \put(100,-2){\tiny$\times10^4$}
            \put(-4,-1){\scriptsize0}
            \put(-9,23){\scriptsize0.5}
            \put(-4,48.5){\scriptsize1}
            \put(-9,73){\scriptsize1.5}
            \put(-4,98){\scriptsize2}
            \put(-1,101){\tiny$\times10^4$}
            \put(36,55){\small A}
            \put(55,36){\small B}
            \put(35.5,77){\rotatebox{90}{\footnotesize$v_1\leq20$}}
            \put(77,36){\footnotesize$v_2\leq20$}
            \put(47,-11){\small$x_1$}
            \put(-13,46){\rotatebox{90}{\small$x_2$}}
        \end{overpic} \vspace{6ex}
    \end{minipage} \vspace{2ex}
    \caption{Left: Three body voxels, one belonging to a prostate tumor ($u$) and two belonging to the rectum ($v_1$ and $v_2$), are irradiated by two beamlets ($x_1$ and $x_2$).
    The beamlet intensities shown correspond to the global minimum at point B.
    Right: The nonconvex feasible region induced by the dose--volume constraint that no more than 50\% of the rectum volume exceeds 20 Gy is explicitly given by the union of regions $v_1 \leq 20$ and $v_2 \leq 20$.
    The contours indicate the isolines of objective function~\eqref{eq:simpleObj}, with local minimum at point A and global minimum at point B.\label{fig:simple}}
\end{figure}

There are two possibilities for satisfying the dose--volume constraint: either the first rectum voxel is allowed to exceed 20 Gy (Figure~\ref{fig:simple}, $v_2 \leq 20$ region), or the second rectum voxel is allowed to exceed 20 Gy (Figure~\ref{fig:simple}, $v_1 \leq 20$ region).
The union of these two regions forms an L-shaped nonconvex region with two local minima at points A and B.
Depending on the beamlet initialization and algorithm used to minimize the objective function, it is possible to converge to either of these two local minima, and there is no guarantee that the global minimum at B will be reached.
In general, the feasible beamlet region imposed by a dose--volume constraint comprises the nonconvex union of regions satisfying maximum-dose constraints for different choices of voxels \cite{deasy1997multiple}. 

The FMO problem with nonconvex dose--volume constraints is related to the general convex-cardinality problem 
\begin{mini!}[2] 
    {x\in\mathcal{C}}{f(x)}{}{}
    \addConstraint{\|x\|_0}{\leq k}{\label{eq:cardConst}},
\end{mini!}
where $f(x)$ is a convex function and $\mathcal{C}$ is a convex set.
The $\ell_0$-norm constraint defines a highly nonconvex set: the \emph{union} of all subspaces of dimension at most $k$.
Problems with these cardinality (or sparsity) constraints have become widely used within the past three decades \cite{tropp2010computational}, with applications such as portfolio optimization \cite{lobo2007portfolio}, signal processing \cite{candes2006stable}, and compressive sensing \cite{candes2008enhancing}.
Due to the combinatorial nature of these constraints, the related feasibility problem is NP-complete \cite{bienstock1996computational}, and the problem itself is known to be NP-hard in general \cite{bienstock1996computational, welch1982algorithmic}, except for in special cases \cite{gao2013polynomial}.
More generally, it is an NP-hard problem just to  certify whether a point of a nonlinear function is a local minimum (let alone a global one)~\cite{murthy1987some}.
For a more detailed overview of cardinality problems, see \cite{tropp2010computational, bruglieri2006annotated, beck2013sparsity, sun2013recent}.

Many of the standard approaches for solving the convex-cardinality problem have been applied to the FMO problem with nonconvex dose--volume constraints, most relying upon convex relaxations or approximations.
One technique is to reformulate both problems as a mixed-integer programming (MIP) problem, which can be solved using branch-and-bound or branch-and-cut methods \cite{lee2003integer, bienstock1996computational, langer1990large}, simplex methods using big $M$ variables \cite{zhang2017mixed}, or by replacing the binary constraint $x \in \{0,1\}^n$ with the linear constraint $x \in [0,1]^n$ \cite{halabi2006dose, burdakov2016mathematical, mukherjee2019integrating}.
Another popular technique for solving the convex-cardinality problem is to relax the nonconvex $\ell_0$-norm constraint~\eqref{eq:cardConst} using the convex $\ell_1$-norm as either a constraint or regularizer \cite{tropp2010computational, boyd2004convex, tibshirani1996regression}.
Convex approximations to the nonconvex dose--volume constraint $\|(Ax - d^\text{dv})_+\|_0 \leq k$ include the convex constraint $\| (1 + \lambda(Ax - d^\text{dv}))_+ \|_1 \leq k$ \cite{fu2019convex} and conditional value-at-risk (CVaR) constraints \cite{rockafellar2000optimization} that limit the average dose to the tail of the dose distribution \cite{ahmed2010automated, kishimoto2018successive, romeijn2006new, romeijn2003novel}.
One common dose--volume penalty includes the convex objective term $\lambda \|(Ax - d^\text{dv})_+\|_2^2$ \cite{ehrgott2010mathematical, bortfeld1990methods, wu2000algorithms, spirou1998gradient}, though the $\ell_2$-norm is less effective at promoting sparsity than the $\ell_1$-norm \cite{tibshirani1996regression}.
When replacing constraints with objectives, weight factors can be applied to individual vector elements, and iterative re-weighting schemes can then be used to improve convergence \cite{cotrutz2002using, halabi2006dose, candes2008enhancing, abdi2013comparison}.

The complication of nonconvexity arises from the combinatorial nature of the cardinality constraint~\eqref{eq:cardConst}; once a sparsity pattern for $x$ has been chosen, the problem becomes convex.
This observation is the motivation behind the multi-stage approach of 1) solving an approximation of the original problem, 2) determining a sparsity pattern for $x$ based on the resulting solution, and 3) {\it polishing} the solution from Step 1 by solving a convex problem using the sparsity pattern determined in Step 2 \cite{sun2013recent, boyd2004convex, diamond2016general, banjac2017novel}.
In the FMO problem with nonconvex dose--volume constraints, a variety of heuristics have been used to choose the $p\%$ of organ voxels that may receive more than $d^\text{dv}$ Gy in Step 3.
For example, Morrill et al. \cite{morrill1990influence, morrill1991dose} divide organ volumes {\it a priori} into high-dose and low-dose regions based on tumor proximity, Saberian et al. \cite{saberian2016optimal, saberian2016theoretical} and Hou et al. \cite{hou2003optimization} solve the FMO problem without dose--volume constraints to determine sparsity patterns, while Fu et al. \cite{fu2019convex} and Mukherjee et al. \cite{mukherjee2019integrating} use convex relaxations of the original problem to determine which voxels may receive more than $d$ Gy.

The great watershed in optimization is between convex and nonconvex problems \cite{rockafellar1993lagrange}.
When minimizing a convex objective function over a convex set, all local minima are also global.
Conversely, optimizing any function over a nonconvex set leads to a nonconvex problem with potentially many local minima. Even in the small 2-D example in Figure~\ref{fig:simple}, the dose--volume constraint creates a nonconvex feasible set where each choice of voxel corresponds to a different local minimum.
As mentioned above, early IMRT treatment planning relied on stochastic methods such as simulated annealing and genetic algorithms to avoid converging to local minima, but these methods suffered from slow convergence rates \cite{webb2003physical, spirou1998gradient}.
Fortunately, many investigations into the properties of local minima induced by nonconvex dose--volume constraints have concluded that, given a good initialization, it is possible to find local minima whose dose distributions are close to the global minimum using fast gradient methods \cite{bortfeld1999optimized, wu2003method, wu2002multiple}, so the use of stochastic methods may not be warranted \cite{chui2001inverse, alber1999objective, llacer2003absence}.
In this case, the emphasis in IMRT is often on finding a good local minimum that satisfies clinical objectives rather than finding the global minimum \cite{chui2001inverse, zhang2006fluence}.

\subsection{Contributions} 

In this paper, we develop a novel approach to handle dose--volume constraints in the radiation treatment planning problem.
In contrast to prior efforts, which focused on convex approximations, our formulation is  nonconvex, yet can be efficiently solved using continuous optimization techniques.
We start with the idealized problem, formulated using objectives for the tumor and constraints for the OARs, and extend it to a flexible formulation that allows re-weighting strategies to balance infeasible solutions (with respect to the OARs) against delivering a sufficient amount of radiation to the tumor.
To solve this formulation, we develop a customized algorithm that combines quadratic programming with fast nonlinear operators, and show that it is guaranteed to converge to a stationary point of the proposed model.
The resulting approach is scalable, handles multiple objectives on tumors and OARs, and returns actionable treatment plans on patient-scale datasets. 

The paper proceeds as follows.
In Section~\ref{sec:formulation} we formulate radiation treatment planning as a nonconvex optimization problem over the fluence map.
In Section~\ref{sec:approach} we develop a custom optimization algorithm and provide a convergence analysis. In Section~\ref{sec:results} we present multiple radiation treatment use cases with data from an anonymized cancer patient dataset, demonstrating the feasibility of our approach.
Finally, in Section~\ref{sec:conclusion} we summarize the results and discuss future avenues for research. 

\section{Problem Formulation}
\label{sec:formulation}

Given a fixed set of beam angles, the dose delivered to the patient is approximated with a linear mapping from beamlet intensity to voxel dose.
The FMO step is to calculate beamlet intensities that achieve the desired dose distribution to planning target volumes (PTVs) indexed by $i \in \mathcal{I}$, while respecting constraints on OARs indexed by $j \in \mathcal{J}$.
Let $d_i$ denote target delivery doses (often in Gy) to PTV $i \in \mathcal{I}$, and $d_j^{\max}$, $d_j^{\text{mean}}$, and $d_j^{\text{dv}}$ denote the maximum, mean, and dose--volume tolerance values for OAR $j\in \mathcal{J}$,
where the size of dose parameters (i.e., scalar or vector) can be determined by context.
The sets of OARs that have maximum-dose, mean-dose, and dose--volume constraints are denoted by $\mathcal{J}^{\mbox{\scriptsize{max}}}, \mathcal{J}^{\mbox{\scriptsize{mean}}}$, and $\mathcal{J}^{\mbox{\scriptsize{dv}}}$.
Let $A_i$ and $A_j$ be the linear beamlet-to-voxel maps for the PTV $i$ and OAR $j$.
The nonnegative variable $x$ encodes the unknown vector of beamlet intensities.
The idealized FMO problem with a uniform dose to the PTVs and the three constraints described in Section~\ref{sec:intro} on the OARs is given by
\begin{mini!}[2] 
    {x\geq 0}{\sum_{i \in \mathcal{I}} \frac{\alpha_i}{2n_i} \| A_i x - d_i \|_2^2 + \frac{\lambda}{2} \| x \|_2^2,}{\label{eqn:origEq:tumor}}{(\mathcal{P}_0)}
    \addConstraint{A_j x}{\leq d_j^\text{max}, \quad \label{eqn:origEq:max}}{j \in \mathcal{J}^\text{max}}
    \addConstraint{\mathbf{1}^TA_jx}{\leq n_jd_j^\text{mean},\quad \label{eqn:origEq:mean}}{j \in \mathcal{J}^\text{mean}}
    \addConstraint{\| (A_j x - d_j^\text{dv})_+ \|_0}{\leq \frac{n_j p_j}{100},\quad \label{eqn:origEq:dv}}{j \in \mathcal{J}^\text{dv},}
\end{mini!}
where $n_i$ and $n_j$ indicate the number of voxels in the $i$th PTV and $j$th OAR, and the percentage of voxels in the $j$th OAR that may receive a dose exceeding $d_j^\text{dv}$ is specified by $p_j$.
The least-squares term used for the uniform target dose is widely used \cite{ehrgott2010mathematical, bortfeld1990methods, wu2000algorithms, spirou1998gradient, cotrutz2002using}.
In principle, Problem~$(\mathcal{P}_0)$ can be replaced by a feasibility problem with lower and upper dose--volume constraints also placed on the PTV \cite{cho1998optimization, penfold2017sparsity, michalski2004dose}.
Here we focus on $(\mathcal{P}_0)$, but the techniques we propose extend to the feasibility problem.

Problem~$(\mathcal{P}_0)$ is difficult for several reasons.
First, it is high-dimensional (beamlets can be on the order of $10^3-10^5$) \cite{wu2003method}.
Second, beamlet-to-voxel maps are generally ill-conditioned \cite{webb2003physical, shepard2000iterative, ahmad2010genetic, crooks2001linear}.
Third, dose--volume constraints are combinatorial in nature, as constraint~\eqref{eqn:origEq:dv} requires making a choice of voxels allowed to exceed $d_j^\text{dv}$, so finding the global solution is NP-hard \cite{tuncel2012strong}.
Finally, while Problem~$(\mathcal{P}_0)$ is always feasible (i.e., $x = 0$ is a solution), it may not yield a clinically useful radiation distribution. Feasibility-type reformulations of $(\mathcal{P}_0)$ often fail to have any meaningful solutions because the target doses almost always compete with OAR doses, as it is common for tumors to be close to or within OARs.
Practical radiation treatment planning converts constraints into objectives, and then looks for the best trade-off among competing objectives. 

In the next section, we formulate the idealized problem using constraints, and then develop a relaxation that always has a feasible solution.
This allows significant modeling and computational flexibility, yielding efficiently computable and clinically useful solutions that balance the need to deliver enough radiation to the tumor while approximately satisfying OAR constraints.  

\subsection{New formulation} 

To simplify the exposition, we explain the new formulation using only dose--volume constraints on the OARs:
\begin{mini!}[2] 
    {x\geq 0}{\sum_{i \in \mathcal{I}} \frac{\alpha_i}{2n_i} \| A_i x - d_i \|_2^2 + \frac{\lambda}{2} \| x \|_2^2}{}{(\mathcal{P}_1)}
    \label{eq:justDvcEq}
    \addConstraint{\| (A_j x - d_j^{\text{dv}})_+ \|_0}{\leq \frac{n_jp_j}{100},\quad}{j \in \mathcal{J}^{\text{dv}}\label{eq:p1dvc}}.
\end{mini!}
Solutions to~$(\mathcal{P}_1)$ may not be clinically useful if the dose--volume constraints prevent sufficient dose delivered to the tumor.
We develop an extended, nonconvex formulation that relaxes the dose--volume constraints.
The new formulation includes additional variables $w_j \approx A_j x - d_j^{\text{dv}}$, and penalizes deviations of the delivered OAR doses from $w_j$:
\begin{mini!}[2] 
    {x \geq 0,w}{f(x,w) := \sum_{i \in \mathcal{I}} \frac{\alpha_i}{2n_i} \| A_i x - d_i \|_2^2}{}{(\mathcal{P}_2)\nonumber}
    \breakObjective{\hspace{3em}+ \sum_{j \in \mathcal{J}^\text{dv}} \frac{\alpha_j}{2n_j} \| w_j - (A_j x - d_j^{\text{dv}}) \|_2^2 + \frac{\lambda}{2} \| x \|_2^2}\label{eq:fObj}
    \addConstraint{\| (w_j)_+ \|_0}{\leq \frac{n_jp_j}{100},\quad}{j \in \mathcal{J}^{\text{dv}}}.
    \label{newEq}
\end{mini!}
In formulation~$(\mathcal{P}_2)$, the auxiliary variables $w_j$ are forced to satisfy the dose--volume constraints, while the residuals $ A_j x - d_j^{\text{dv}}$ may not necessarily do so.
The weights $\alpha_j$ control how closely $w_j$ must approximate $A_j x - d_j^{\text{dv}}$, and as $\alpha_j \uparrow \infty$, problem~$(\mathcal{P}_2)$ converges to the idealized problem~$(\mathcal{P}_1)$ in an epigraphical sense. 

The epigraph of a function is defined by 
\begin{equation}
    \text{epi}(f) = \{(x,\beta) \,:\, f(x) \leq \beta\}.
\end{equation}
Epigraphs make it possible to go between sets and functions, and the notion of epigraphical convergence~\cite{rockafellar2009variational} 
is exactly that of set convergence for epigraphs, i.e.,
\begin{equation}
    f^\alpha  \quad \rightarrow_{\text{epi}}  \quad f \qquad \text{exactly when}  \qquad \text{epi}(f^\alpha)   \quad \rightarrow \quad \text{epi}(f).
\end{equation}
We know that our objective functions \eqref{eq:fObj} converge epigraphically to the original problem by~\cite[Theorem 7.4(d)]{rockafellar2009variational}
because of monotonicity,
\begin{equation}
    f^{\alpha_j} \leq f^{\alpha_{j+1}} \quad \text{when} \quad \alpha_j \leq \alpha_{j+1},
\end{equation}
which we always have when using quadratic penalties for prox-bounded functions (see discussion below Theorem 7.4(d)).

Formulation~$(\mathcal{P}_2)$ has several desirable features.
First, while most heuristic approaches described in the introduction rely on convex approximations, $(\mathcal{P}_2)$ fully captures the nonconvex structure of dose--volume constraints, while giving the modeler flexibility to match them approximately using weights $\alpha_j$.
The weights $\alpha_i$ and $\alpha_j$ can be tuned to balance the trade-off between covering the tumor and approximately meeting OAR constraints.
Second, from a computational perspective, we can design efficient and provably convergent algorithms that aggregate information from multiple tumors and OARs using partial minimization, as described in the next section.
Furthermore, approximate solutions can be computed efficiently, then refined using techniques such as iterative re-weighting schemes or polishing.

\section{Algorithmic Approach}
\label{sec:approach}

Our main strategy is to {\it partially minimize} formulation~$(\mathcal{P}_2)$ with respect to $x$, viewing~$(\mathcal{P}_2)$ as a {\it value-function optimization} problem
\begin{mini!}[2] 
    {w}{g(w) := \min_{x \geq 0} \sum_{i \in \mathcal{I}} \frac{\alpha_i}{2n_i} \| A_i x - d_i \|_2^2}{}{(\mathcal{P}_3)\nonumber}
    \breakObjective{\hspace{2em}+ \sum_{j \in \mathcal{J}^\text{dv}} \frac{\alpha_j}{2n_j} \| w_j - (A_j x - d_j^{\text{dv}}) \|_2^2 + \frac{\lambda}{2} \| x \|_2^2}{\label{eq:value}}
    \addConstraint{w_j \in \Omega_j := \left\{ w_j \in \mathbb{R}^{n_j} :\, \| (w_j)_+ \|_0 \leq \frac{n_jp_j}{100}\right\}, \quad}{j \in \mathcal{J}^\text{dv}}.
\end{mini!}
The algorithm is centered on optimizing the value function $g(w)$ in~\eqref{eq:value}, using a projected gradient approach to take care of the nonconvex constraints encoded by $\Omega_j$.
The high-level iteration is simply 
\begin{equation}
    \label{eq:pgsimple}
    w^{(k+1)} \in \text{proj}_\Omega \left[w^{(k)} - T \nabla g\left(w^{(k)}\right)\right],
\end{equation}
where $w = (w_{j_1}, w_{j_2}, \dots)$, $T = \text{diag}(t_{j_1}, \dots, t_{j_2}, \dots)$, and $\Omega = \Omega_{j_1} \times \Omega_{j_2} \times \dots$ for $j_i \in \mathcal{J}^\text{dv}$.
Evaluation of $g$ and $\nabla g$ is done through computing the partial minimum over $x$, as detailed in Algorithm~\ref{algo:pg}.
Projection onto $\Omega_j$ can be implemented efficiently by ordering the elements of $w_j$ and setting the lowest $\left\lceil\sfrac{(100-p_j)n_j}{100}\right\rceil$ entries of $w_j$ to $\min\{0,w_j\}$ \cite{penfold2017sparsity}.\\[-5ex]

\begin{figure}[H]
    \centering
    \begin{minipage}{0.8\linewidth}
        \begin{algorithm}[H]
            \caption{Projected gradient descent for $g(w)$ in~$(\mathcal{P}_3)$.}
            \label{algo:pg}
            \begin{algorithmic}
                \State{Input $\epsilon$, $x^{(0)}$. Initialize $k = 0$, $\text{err}> \epsilon$, $w_j^{(0)} = \text{proj}_{\Omega_j} \left(A_jx^{(0)} - d_j^\text{dv}\right)$. Set $t_j \leq \sfrac{n_j}{\alpha_j}$.}
                \While{ err $> \epsilon$}
	            \State{$x^{(k+1)} \leftarrow \argmin_{x \geq 0} f\left(x,w^{(k)}\right)$}
	            \For{$j \in \mathcal{J}^\text{dv}$}
		        \State{$w_j^{(k+1)} \leftarrow \text{proj}_{\Omega_j} \left\{ w_j^{(k)} - \frac{\alpha_j t_j}{n_j} \left[w_j^{(k)} - \left(A_j x^{(k+1)} - d_j^\text{dv} \right) \right] \right\}$}
	            \EndFor
	            \State{err  $\leftarrow\sum_{j \in \mathcal{J}^\text{dv}} \frac{1}{t_j}\left\|w_j^{(k+1)} - w_j^{(k)}\right\|_2$}
	            \State{$k \leftarrow k + 1$}
                \EndWhile
                \State \Return{$\argmin_{x \geq 0} f\left(x,w^{(k)}\right)$}
            \end{algorithmic}
        \end{algorithm}
    \end{minipage}
\end{figure}

The differentiability of $g(w)$, formula for the derivative, and its  Lipschitz constant follow from~\cite[Theorem 1]{zheng2018relax}. In particular, for one PTV and one OAR, define 
\begin{equation}
    g(w) = \min_{x \geq 0} \frac{\alpha_1}{2n_1} \| A_1x - d_1 \|_2^2 +  \frac{\alpha_2}{2n_2}\|w - (A_2x-d_2^\text{dv})\|_2^2 + \frac{\lambda}{2} \| x \|_2^2.
\end{equation}
Next, taking $h(x) = \delta_+(x) + \frac{1}{2}\|A_1x-d_1\|_2^2 + \frac{\lambda}{2} \| x \|_2^2$, where $\delta_+(x)$ is the indicator function of the nonnegative orthant, 
\begin{equation}
    \delta_+(x) = \begin{cases} 0 & \text{if } x \geq 0, \\ +\infty & \text{otherwise,} \end{cases}
\end{equation}
we have
\begin{align}
    g(w) &= \min_x h(x) + \frac{\alpha_2}{2n_2} \| w - (A_2 x - d_2^\text{dv}) \|_2^2, \\
    x(w) &\in \argmin_x h(x) + \frac{\alpha_2}{2n_2} \| w - (A_2x - d_2^\text{dv}) \|_2^2, 
\end{align}
and from \cite[Theorem 1]{zheng2018relax}, we get
\begin{equation}
    \nabla g(w) = \frac{\alpha_2}{n_2} \left\{w - \left[ A_2 x(w) - d_2^\text{dv} \right]\right\} \quad \text{and} \quad \text{lip}(\nabla g) \leq \frac{\alpha_2}{n_2}.
\end{equation}
Similarly, for multiple OARs we have $\sfrac{\partial g}{\partial w_j}(w) = \sfrac{\alpha_j}{n_j}\left\{ w_j - \left[A_j x(w) - d_j^\text{dv}\right]\right\}$ and $\text{lip}\left(\sfrac{\partial g}{\partial w_j}\right) \leq \sfrac{\alpha_j}{n_j}$ for all $j \in \mathcal{J}^\text{dv}$, and for all $w, w' \in \mathbb{R}^{\sum_{j \in \mathcal{J}^\text{dv}} n_j}$ we have
\begin{align}
    \left\| \nabla g(w) - \nabla g\left(w'\right) \right\|_2^2 &= \sum_{j \in \mathcal{J}^\text{dv}} \left\| \tfrac{\partial g}{\partial w_j} (w) - \tfrac{\partial g}{\partial w_j} (w') \right\|_2^2, \\
    &\leq \sum_{j \in \mathcal{J}^\text{dv}} \left(\frac{\alpha_j}{n_j}\right)^2 \left\| w_j - w_j' \right\|_2^2, \\
    &\leq \max_{j \in \mathcal{J}^\text{dv}} \left(\frac{\alpha_j}{n_j}\right)^2 \sum_{j \in \mathcal{J}^\text{dv}} \left\|w_j - w_j'\right\|_2^2, \\
    &= \max_{j \in \mathcal{J}^\text{dv}} \left(\frac{\alpha_j}{n_j}\right)^2 \left\|w - w'\right\|_2^2,
    \end{align}
so $\text{lip}\left(\nabla g\right) \leq \max_{j \in \mathcal{J}^\text{dv}} \left(\sfrac{\alpha_j}{n_j}\right)$.

Algorithm~\ref{algo:pg} is projected gradient descent for the value function~\eqref{eq:value}, and is equivalent to the simple iteration~\eqref{eq:pgsimple}.
Under fairy weak assumptions\footnote{Specifically, the sequence $\{w^{(k)}\}_{k \in \mathbb{N}}$ generated by \eqref{eq:pgsimple} must be bounded, and $g + \delta_\Omega$ must be a Kurdyka-\L{}ojasiewicz function (which covers a wide range of functions, including the class of real semi-algebraic functions).}, the nonconvex gradient projection method converges for a fixed step size $t < \sfrac{1}{\text{lip}(\nabla g)}$ \cite[Theorem 5.3]{attouch2013convergence}.
On the other hand, Algorithm~\ref{algo:pg} allows the flexibility to choose different step sizes $t_j \leq \sfrac{n_j}{\alpha_j}$ for each $j \in \mathcal{J}^\text{dv}$.
We prove the convergence of this variant of projected gradient descent, starting with some definitions to quantify stationarity and optimality.

\begin{definition}[Stationary Point]
    A vector $w \in \Omega$ is called a stationary point for $(\mathcal{P}_3)$ if
    \begin{equation}
        0 \in \nabla g(w) + N_\Omega(w),\label{eq:stationary}
    \end{equation}
    where $N_\Omega$ is the normal cone to $\Omega$ at $w$.\label{def:stationary}
\end{definition}

This definition follows from the basic first-order conditions for optimality \cite[Theorem 6.12]{rockafellar2009variational}, where \eqref{eq:stationary} is a necessary condition for $w$ to be a local minimizer of $g$ on $\Omega$.
This definition of stationarity motivates the following measure of optimality.

\begin{definition}[$\epsilon$-accurate Solution]
    Given fixed step sizes $t_j > 0$ for all $j \in \mathcal{J}^\text{dv}$, we call a vector $w \in \Omega$ an $\epsilon$-accurate solution for $(\mathcal{P}_3)$ if
    \begin{equation}
        \sum_{j \in \mathcal{J}^\text{dv}} \frac{1}{t_j^2} \left\| w_j - \bar{w}_j \right\|_2^2 \leq \epsilon,
    \end{equation}
    where $\bar{w}_j \in \text{proj}_{\Omega_j}\left[ w_j - t_j \sfrac{\partial g}{\partial w_j}(w)\right]$.\label{def:epAccurate}
\end{definition}

By the first-order optimality conditions of the problem defining the projection in \eqref{eq:pgsimple}, for each $j \in \mathcal{J}^\text{dv}$ the vectors $w_j$ and $\bar{w}_j$ satisfy
\begin{equation}
    \frac{1}{t_j}\left(w_j - \bar{w}_j\right) \in \frac{\partial g}{\partial w_j}(w) + N_{\Omega_j}\left(\bar{w}_j\right),
\end{equation}
so if $\sum_{j \in \mathcal{J}^\text{dv}}\sfrac{1}{t_j^2}\left\| w_j - \bar{w}_j \right\|_2^2 = 0$, then $0 \in \sfrac{\partial g}{\partial w_j}(w) + N_{\Omega_j}(w_j)$ for all $j \in \mathcal{J}^\text{dv}$.
Furthermore, by \cite[Proposition 6.41]{rockafellar2009variational}, we have $0 \in \nabla g(w) + N_\Omega(w)$, so $w$ is a stationary point for problem $(\mathcal{P}_3)$.

We now establish the convergence of Algorithm~\ref{algo:pg}, with proofs provided in Appendix~\ref{sec:proofs}.

\begin{theorem}
    For fixed step sizes $t_j \in \left(0, \sfrac{n_j}{\alpha_j}\right]$ for all $j \in \mathcal{J}^\text{dv}$, Algorithm 1 generates a sequence of iterates $\left\{w^{(k)}\right\}_{k \in \mathbb{N}}$ such that $g\left(w^{(k)}\right)\downarrow g^* \geq 0$, in particular $g^* \leq g\left(w^{(k)}\right)$ for all $k \in \mathbb{N}$.
    Furthermore, for any convergent subsequence $w^{(p_k)} \to w_p^*$, the vector $w_p^*$ is a local minimizer of $g$ on $\Omega$, and $g\left(w_p^*\right) = g^*$. \label{thm:g_converge}
\end{theorem}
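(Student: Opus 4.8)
The plan is to follow the standard ``sufficient decrease plus subsequential limit'' template for projected gradient, and then to exploit two structural features specific to $(\mathcal{P}_3)$ — the convexity of the value function $g$ and the fact that $\Omega$ is \emph{locally} a finite union of convex polyhedra — to upgrade the resulting stationary point to a genuine local minimizer.

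First I would record that $g$ is convex: since $f(x,w)$ in \eqref{eq:fObj} is jointly convex in $(x,w)$ and $g$ is obtained by partial minimization over the convex set $\{x \geq 0\}$, convexity is preserved. Together with the per-block Lipschitz bounds $\mathrm{lip}(\tfrac{\partial g}{\partial w_j}) \leq \alpha_j/n_j$ established above, this yields a block-separable descent lemma
\begin{equation*}
  g(w') \leq g(w) + \langle \nabla g(w), w' - w\rangle + \sum_{j \in \mathcal{J}^\text{dv}} \frac{\alpha_j}{2n_j}\|w_j' - w_j\|_2^2 .
\end{equation*}
Applying this at $w = w^{(k)}$, $w' = w^{(k+1)}$ and combining with optimality of the Euclidean projection — comparing $w_j^{(k+1)}$ against the feasible competitor $w_j^{(k)} \in \Omega_j$ gives $\langle \tfrac{\partial g}{\partial w_j}(w^{(k)}), w_j^{(k+1)} - w_j^{(k)}\rangle \leq -\tfrac{1}{2t_j}\|w_j^{(k+1)} - w_j^{(k)}\|_2^2$ — produces the sufficient-decrease estimate
\begin{equation*}
  g(w^{(k)}) - g(w^{(k+1)}) \geq \sum_{j \in \mathcal{J}^\text{dv}} \frac{1}{2}\left(\frac{1}{t_j} - \frac{\alpha_j}{n_j}\right)\|w_j^{(k+1)} - w_j^{(k)}\|_2^2 \geq 0 ,
\end{equation*}
using $t_j \leq n_j/\alpha_j$. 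Since $g \geq 0$ (it is a minimum of nonnegative terms), the sequence $\{g(w^{(k)})\}$ is nonincreasing and bounded below, so $g(w^{(k)}) \downarrow g^* \geq 0$. Telescoping then makes the successive differences summable, hence $w^{(k+1)} - w^{(k)} \to 0$ and the stopping quantity $\mathrm{err} \to 0$.

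Next I would pass to a convergent subsequence $w^{(p_k)} \to w_p^*$. Because the differences vanish, $w^{(p_k+1)} \to w_p^*$ as well, so continuity of $\nabla g$ together with outer semicontinuity (closed graph) of $\text{proj}_\Omega$ for the closed set $\Omega$ lets me pass to the limit in \eqref{eq:pgsimple} and conclude $w_p^* \in \text{proj}_\Omega[w_p^* - T\nabla g(w_p^*)]$; by the first-order characterization recorded after Definition~\ref{def:epAccurate} this is exactly stationarity, $0 \in \nabla g(w_p^*) + N_\Omega(w_p^*)$. Continuity of $g$ gives $g(w_p^*) = \lim_k g(w^{(p_k)}) = g^*$.

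The main obstacle is the final upgrade from stationarity to local minimality, which is where the geometry of $\Omega$ must be used. The key observation is that in a neighborhood of $w_p^*$ each set $\Omega_j = \{w_j : \|(w_j)_+\|_0 \leq n_jp_j/100\}$ coincides with a finite union of convex polyhedra all containing $w_{p,j}^*$: the strictly positive and strictly negative coordinates of $w_{p,j}^*$ are locally frozen by continuity, and only its zero coordinates may change sign. Since $w_{p,j}^*$ is the block-wise Euclidean projection of $w_{p,j}^* - t_j \tfrac{\partial g}{\partial w_j}(w_p^*)$ onto the \emph{union} $\Omega_j$ and lies in each local convex piece $C_{\ell,j}$, it is also the projection onto each $C_{\ell,j}$ separately; the projection variational inequality for the convex $C_{\ell,j}$ then gives $\langle \tfrac{\partial g}{\partial w_j}(w_p^*), u - w_{p,j}^*\rangle \geq 0$ for all $u \in C_{\ell,j}$, and summing over $j$ yields $\langle \nabla g(w_p^*), w - w_p^*\rangle \geq 0$ for every $w$ in the corresponding product piece. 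Convexity of $g$ upgrades this to $g(w) \geq g(w_p^*)$ on that piece, and taking the union over the finitely many local pieces shows $w_p^*$ minimizes $g$ on a neighborhood in $\Omega$, i.e., it is a local minimizer. I expect the delicate points to be (i) verifying the local union-of-polyhedra description of $\Omega$ and the attendant claim that projection onto the union restricts to projection onto each containing convex piece, and (ii) the boundary step size $t_j = n_j/\alpha_j$, where the sufficient-decrease coefficient degenerates to zero and one must instead exploit the Moreau-envelope (cocoercivity) structure of the convex value function $g$ to retain summability of the successive differences.
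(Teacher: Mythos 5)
Your proposal is correct and essentially complete for step sizes $t_j$ \emph{strictly} inside the interval, and there it follows the paper's skeleton: your block-separable descent lemma and sufficient-decrease estimate are exactly Lemma~\ref{lem:descent} and Lemma~\ref{lem:g_decreasing}, and the monotone-convergence conclusion is identical. You deviate in two places, both defensibly. First, you justify that subsequential limits satisfy the fixed-point/stationarity condition via vanishing successive differences plus outer semicontinuity of $\text{proj}_\Omega$; the paper's proof simply asserts that $w_p^*$ is a fixed point and invokes Theorem~\ref{thm:fixedpoint}, so your argument is actually more careful on this step. Second, your upgrade from stationarity to local minimality (local description of $\Omega$ as a finite union of convex polyhedra each contained in $\Omega$, projection onto the union restricting to projection onto each containing piece, variational inequality on each piece, then convexity of $g$) is a valid replacement for the paper's Theorem~\ref{thm:fixedpoint}, which instead does a case analysis on the sorting-based projection formula and a normal-cone argument on a single convex piece. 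Note also that plain convexity of $g$, which you get for free from partial minimization, suffices for this step; the paper's strong convexity (via the Schur-complement Lemma~\ref{lem:f_convex}--\ref{lem:g_convex}) is needed mainly for its later results (coercivity in Lemma~\ref{lem:w_bounded}, Theorem~\ref{thm:w_converge}).

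The genuine gap is the boundary step size $t_j = \sfrac{n_j}{\alpha_j}$, which the theorem explicitly includes and which is the case the paper emphasizes (it is exactly block coordinate descent, Algorithm~\ref{algo:bcd}). There your central mechanism fails: the sufficient-decrease coefficient $\tfrac{1}{2}\left(\sfrac{1}{t_j} - \sfrac{\alpha_j}{n_j}\right)$ is zero, so telescoping gives no control of $\left\|w^{(k+1)} - w^{(k)}\right\|_2$. The cocoercivity/Moreau-envelope repair you sketch does not close this: Baillon--Haddad-type arguments that push projected gradient to step size $\sfrac{1}{L}$ (or $\sfrac{2}{L}$) rely on firm nonexpansiveness of the projection, i.e., on the feasible set being convex, whereas $\Omega$ is a nonconvex union here. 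Indeed, at the boundary the successive differences are not guaranteed to vanish at all: the projection onto $\Omega_j$ can have ties, and the iteration can in principle alternate between distinct tied projections of equal objective value. The theorem's conclusion survives such behavior (each alternating limit point is still a fixed point, hence a local minimizer), but no argument routed through $w^{(k+1)} - w^{(k)} \to 0$ can establish it. This is precisely why the paper's proof is organized around the fixed-point theorem: it needs only monotone decrease, which Lemma~\ref{lem:g_decreasing} provides on the closed interval. What is missing from both your write-up and, strictly speaking, the paper's, is the verification that subsequential limits are fixed points at the boundary; it follows not from vanishing differences but from the partial-minimization structure of the boundary iteration. For any $w \in \Omega$,
\begin{equation*}
    g\left(w^{(p_k+1)}\right) \;\leq\; f\left(x\left(w^{(p_k)}\right), w^{(p_k+1)}\right) \;\leq\; f\left(x\left(w^{(p_k)}\right), w\right) \;\longrightarrow\; f\left(x\left(w_p^*\right), w\right),
\end{equation*}
using continuity of the solution map $x(\cdot)$ (strong convexity in $x$), while the left-hand side is bounded below by $g^* = f\left(x\left(w_p^*\right), w_p^*\right)$; hence $w_p^*$ minimizes $f\left(x\left(w_p^*\right), \cdot\right)$ over $\Omega$, which is the fixed-point condition. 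To finish your proof you need either this argument for $t_j = \sfrac{n_j}{\alpha_j}$, or to restrict your claim to the open interval — which is what the paper itself does in Theorem~\ref{thm:w_converge}, the only place it actually uses vanishing differences.
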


We note that the convergence of our sequence of objective values $\left\{g\left(w^{(k)}\right)\right\}_{k \in \mathbb{N}}$ only implies subsequence convergence for our iterates $\left\{w^{(k)}\right\}_{k \in \mathbb{N}}$.
In what follows, we prove the convergence of our sequence of iterates to a local minimizer of $g$ on $\Omega$ for the case where $t_j < \sfrac{n_j}{\alpha_j}$ for all $j \in \mathcal{J}^\text{dv}$.

\begin{theorem}
    For fixed step sizes $t_j \in \left(0,\sfrac{n_j}{\alpha_j}\right)$ for all $j \in \mathcal{J}^\text{dv}$, the sequence of iterates $\left\{w^{(k)}\right\}_{k \in \mathbb{N}}$ generated by Algorithm~\ref{algo:pg} converges to a local minimizer of $g$ on $\Omega$.
    Furthermore, we reach an $\epsilon$-accurate solution in $\mathcal{O}(\sfrac{1}{\epsilon}$) iterations.\label{thm:w_converge}
\end{theorem}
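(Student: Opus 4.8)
The plan is to prove the $\mathcal{O}(\sfrac{1}{\epsilon})$ rate by telescoping a sufficient-decrease inequality, and to obtain full iterate convergence by fitting Algorithm~\ref{algo:pg} into the Kurdyka--\L{}ojasiewicz (KL) descent framework of~\cite{attouch2013convergence}. The key setup observation is that, since $x^{(k+1)} = \argmin_{x\ge 0} f(x,w^{(k)}) = x(w^{(k)})$, the inner factor $\sfrac{\alpha_j}{n_j}[w_j^{(k)} - (A_jx^{(k+1)} - d_j^\text{dv})]$ is exactly $\sfrac{\partial g}{\partial w_j}(w^{(k)})$, so the $w$-update is the projected-gradient step~\eqref{eq:pgsimple} with diagonal metric $T = \text{diag}(t_j)$. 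Comparing $w_j^{(k+1)}$ with the feasible point $w_j^{(k)}\in\Omega_j$ in the projection subproblem gives $\langle \sfrac{\partial g}{\partial w_j}(w^{(k)}), w_j^{(k+1)}-w_j^{(k)}\rangle \le -\sfrac{1}{2t_j}\|w_j^{(k+1)}-w_j^{(k)}\|_2^2$, and combining this with the block descent lemma (using the established constants $\sfrac{\alpha_j}{n_j}$) yields
\[ g\left(w^{(k)}\right) - g\left(w^{(k+1)}\right) \ \ge\ \sum_{j \in \mathcal{J}^\text{dv}} \left(\frac{1}{2t_j} - \frac{\alpha_j}{2n_j}\right)\left\|w_j^{(k+1)}-w_j^{(k)}\right\|_2^2. \]
Since $t_j < \sfrac{n_j}{\alpha_j}$ every coefficient is strictly positive, so $\{g(w^{(k)})\}$ decreases strictly (until the iterates stop moving) and converges, being bounded below by $0$.

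For the rate, note that $w_j^{(k+1)}$ is precisely the point $\bar{w}_j$ of Definition~\ref{def:epAccurate} evaluated at $w^{(k)}$, so the accuracy measure at $w^{(k)}$ equals $\sum_j \sfrac{1}{t_j^2}\|w_j^{(k+1)}-w_j^{(k)}\|_2^2$. With $\mu := \min_{j} t_j^2\left(\sfrac{1}{2t_j}-\sfrac{\alpha_j}{2n_j}\right) > 0$, the decrease inequality bounds $\mu$ times this measure from below, and telescoping over $k=0,\dots,K-1$ together with $g\ge g^*\ge 0$ gives $\min_{0\le k < K}\sum_j \sfrac{1}{t_j^2}\|w_j^{(k+1)}-w_j^{(k)}\|_2^2 \le \sfrac{(g(w^{(0)})-g^*)}{\mu K}$. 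Hence an $\epsilon$-accurate solution is produced within $\mathcal{O}(\sfrac{1}{\epsilon})$ iterations.

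Full iterate convergence then follows by verifying the three ingredients of~\cite{attouch2013convergence}. Sufficient decrease is the inequality above. For the relative-error bound I would use the projection optimality $\sfrac{1}{t_j}(w_j^{(k)}-w_j^{(k+1)}) \in \sfrac{\partial g}{\partial w_j}(w^{(k)}) + N_{\Omega_j}(w_j^{(k+1)})$ to exhibit a subgradient of $g+\delta_\Omega$ at $w^{(k+1)}$ of norm at most $(\sfrac{\alpha_j}{n_j}+\sfrac{1}{t_j})\|w_j^{(k+1)}-w_j^{(k)}\|_2$, invoking the Lipschitz bound on $\sfrac{\partial g}{\partial w_j}(w^{(k+1)})-\sfrac{\partial g}{\partial w_j}(w^{(k)})$. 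Boundedness of the iterates comes from coercivity of $g$: partially minimizing the jointly convex objective of $(\mathcal{P}_3)$ over $x$ produces a quadratic lower-bounded by one whose Hessian is a Schur complement that is positive definite precisely because $\lambda>0$, so the set $\{w\in\Omega : g(w)\le g(w^{(0)})\}$ is compact and contains all iterates. Finally, $g$ is a piecewise-quadratic value function and each $\Omega_j$ is a finite union of polyhedral cones, so $g+\delta_\Omega$ is semi-algebraic and therefore KL. The framework then yields $\sum_k \|w^{(k+1)}-w^{(k)}\|_2 < \infty$, so the iterates are Cauchy and converge to a single point $w^*$; since $(0,\sfrac{n_j}{\alpha_j})\subset(0,\sfrac{n_j}{\alpha_j}]$, Theorem~\ref{thm:g_converge} identifies $w^*$ as a local minimizer of $g$ on $\Omega$ with $g(w^*)=g^*$.

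The main obstacle is this last step: matching the set-valued, nonconvex projection onto $\Omega$ to the KL machinery. Concretely, one must confirm that the relative-error subgradient is the \emph{same} object for which the KL inequality is stated, and that the $\ell_0$-type sets $\Omega_j$ genuinely yield a semi-algebraic (hence KL) indicator; the sufficient-decrease and rate estimates, by contrast, are routine telescoping once the descent inequality is in hand. I would also remark that the Schur-complement positivity in fact makes $g$ strongly convex, which pins the KL exponent at $\sfrac{1}{2}$ and would upgrade the guarantee to local linear convergence.
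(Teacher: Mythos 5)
Your proposal is correct in substance, and its first two ingredients --- the projection-based descent inequality and the telescoped $\mathcal{O}(1/\epsilon)$ rate --- are essentially the paper's own argument: your inequality is exactly the chain in Lemma~\ref{lem:descent} and Lemma~\ref{lem:g_decreasing}, and your constant $\mu$ plays the role of the paper's $\beta\zeta^2$ in the second half of the proof of Theorem~\ref{thm:w_converge}. Where you genuinely diverge is the convergence of the iterates themselves. The paper stays elementary: it bounds the iterates via strong convexity and coercivity of $g$ (Lemma~\ref{lem:w_bounded}), recalls that every subsequential limit is a local minimizer with value $g^*$ (Theorem~\ref{thm:g_converge}, via Theorem~\ref{thm:fixedpoint}), and then rules out two distinct subsequential limits by a disjoint-balls contradiction --- since $\beta\|w^{(k+1)}-w^{(k)}\|_2^2 \le g(w^{(k)})-g^*$, step lengths eventually become too short for iterates trapped near one limit to travel to a ball around another --- so the bounded sequence has a unique limit point and converges. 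You instead verify sufficient decrease, a relative-error bound, and the KL property so as to invoke the abstract descent framework of~\cite{attouch2013convergence}, concluding finite length and hence Cauchy convergence. Both routes work, but note two things about yours. First, the cited Theorem 5.3 of~\cite{attouch2013convergence} covers a single step size $t < 1/\mathrm{lip}(\nabla g)$; with block-wise step sizes $t_j$ you must pass through the abstract convergence theorem, which is precisely the extension the paper sidestepped by giving a self-contained proof. Second, your per-block relative-error estimate is slightly off: $\sfrac{\partial g}{\partial w_j}$ depends on the \emph{whole} vector $w$ through $x(w)$, so its Lipschitz bound is with respect to $\|w-w'\|_2$, not $\|w_j-w_j'\|_2$; the blockwise inequality you state is therefore not quite right, although the aggregated bound $\|v^{(k+1)}\|_2 \le b\,\|w^{(k+1)}-w^{(k)}\|_2$ that the framework actually requires still follows. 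What your route buys, once semi-algebraicity of $g+\delta_\Omega$ is checked (straightforward, since each $\Omega_j$ is a finite union of polyhedra and $g$ is a piecewise-quadratic marginal function), is strictly more than the paper proves: summable step lengths, and --- via your closing observation that strong convexity of $g$ pins the KL exponent at $\sfrac{1}{2}$ --- local linear convergence of the iterates, whereas the paper's argument yields convergence with no rate on the iterates themselves.
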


If we choose $t_j = \sfrac{n_j}{\alpha_j}$ for each $j \in \mathcal{J}^\text{dv}$, Algorithm~\ref{algo:pg} reduces to block coordinate descent, detailed in Algorithm~\ref{algo:bcd}, and its convergence to a function value corresponding to a local minimizer follows immediately from Theorem~\ref{thm:g_converge}.\\[-5ex]

\begin{figure}[H]
    \centering
    \begin{minipage}{0.8\linewidth}
        \begin{algorithm}[H]
            \caption{Block coordinate descent: Special case of Algorithm~\ref{algo:pg} with $t_j = \sfrac{n_j}{\alpha_j}$.}
            \label{algo:bcd}
            \begin{algorithmic}
                \State{Input $\epsilon$, $x^{(0)}$. Initialize $k = 0$, $\text{err} > \epsilon$, $w_j^{(0)} = \text{proj}_{\Omega_j} \left(A_jx^{(0)} - d_j^\text{dv}\right)$.}
                \While{ err $> \epsilon$}
	            \State{$x^{(k+1)} \leftarrow \argmin_{x \geq 0} f\left(x,w^{(k)}\right)$}
	            \State{$w^{(k+1)} \leftarrow \argmin_{w \in \Omega} f\left(x^{(k+1)},w\right)$}
	            \State{err $\leftarrow \sum_{j\in \mathcal{J}^\text{dv}} \frac{\alpha_j}{n_j}\left\|w_j^{(k+1)} - w_j^{(k)} \right\|_2$}
	            \State{$k \leftarrow k+1$}
                \EndWhile 
                \State \Return{$\argmin_{x \geq 0} f\left(x,w^{(k)}\right)$}
            \end{algorithmic}
        \end{algorithm}
    \end{minipage}
\end{figure}

\subsection{Simple example}

To demonstrate how the relaxed formulation~$(\mathcal{P}_2)$ and Algorithm~\ref{algo:bcd} behave compared to the idealized problem~$(\mathcal{P}_1)$, we revisit our example from the introduction.
For iterations $k = 0, 3, 32$ of Algorithm~\ref{algo:bcd} to solve~$(\mathcal{P}_2)$, we plot the contour lines of the objective function 
\begin{equation}
    \label{eq:simple2}
    f(x,w^{(k)}) = \frac{1}{2} \| A_1 x - 81 \|_2^2 + \frac{5}{2} \| w^{(k)} - (A_2 x - 20) \|_2^2 + \frac{\lambda}{2} \| x \|_2^2
\end{equation}
in terms of the beamlet intensity variable $x$ for fixed $w^{(k)}$.
Here $A_1 \in \mathbb{R}^{1 \times 2}$ and $A_2 \in \mathbb{R}^{2 \times 2}$ are the beamlet-to-voxel maps for the prostate tumor and rectum, $x \in \mathbb{R}^2$ contains the beamlet intensities, $w^{(k)} \in \mathbb{R}^2$ approximates the deviation between the dose received by the rectum and the dose--volume constraint, and $\lambda$ is set at $5 \times 10^{-6}$.
In each figure, $x^{(k+1)}$ corresponds to the global minimum of the relaxed problem at iteration $k$, A and B correspond to the local and global minima of the idealized problem, and the nonconvex feasible region induced by the dose--volume constraint that no more than 50\% of the rectum volume exceeds 20 Gy is given by the union of regions $v_1 \leq 20 $ and $v_2 \leq 20$.

\begin{figure}
    \centering
    \hspace{1em}
    \begin{overpic}[scale=0.4]{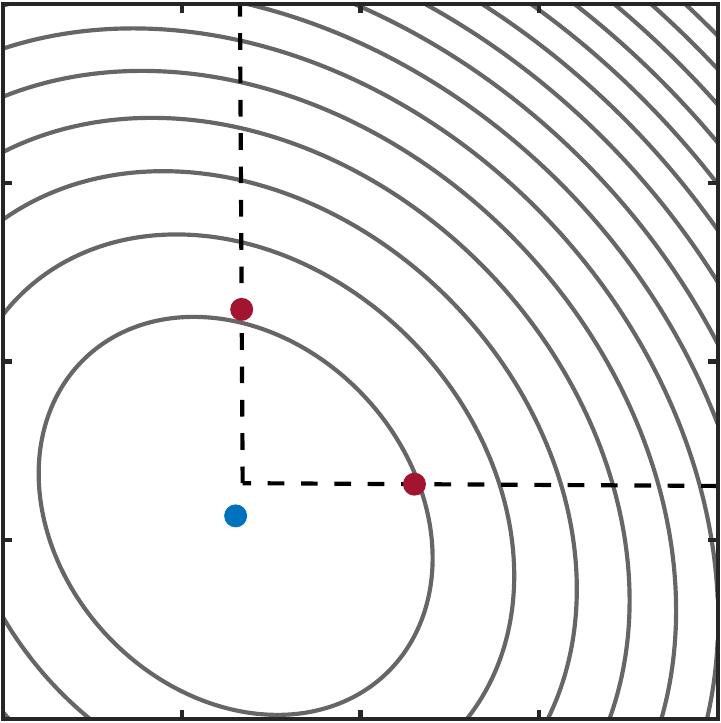}
        \put(41.5,102){\small$k = 0$}
        \put(-1,-5){\scriptsize0}
        \put(21,-5){\scriptsize0.5}
        \put(48.5,-5){\scriptsize1}
        \put(71,-5){\scriptsize1.5}
        \put(88,-5){\scriptsize2}
        \put(91,-4){\tiny$\times10^4$}
        \put(-4,-1){\scriptsize0}
        \put(-9,23){\scriptsize0.5}
        \put(-4,48.5){\scriptsize1}
        \put(-9,73){\scriptsize1.5}
        \put(-4,98){\scriptsize2}
        \put(-1,101){\tiny$\times10^4$}
        \put(25,55){\small A}
        \put(55,25){\small B}
        \put(30,21.5){\small $x^{(1)}$}
        \put(35.5,76.5){\rotatebox{90}{\footnotesize$v_1\leq20$}}
        \put(77,36){\footnotesize$v_2\leq20$}
        \put(47,-11){\small$x_1$}
        \put(-13,46){\rotatebox{90}{\small$x_2$}}
    \end{overpic} \hspace{0.25em}
    \begin{overpic}[scale=0.4]{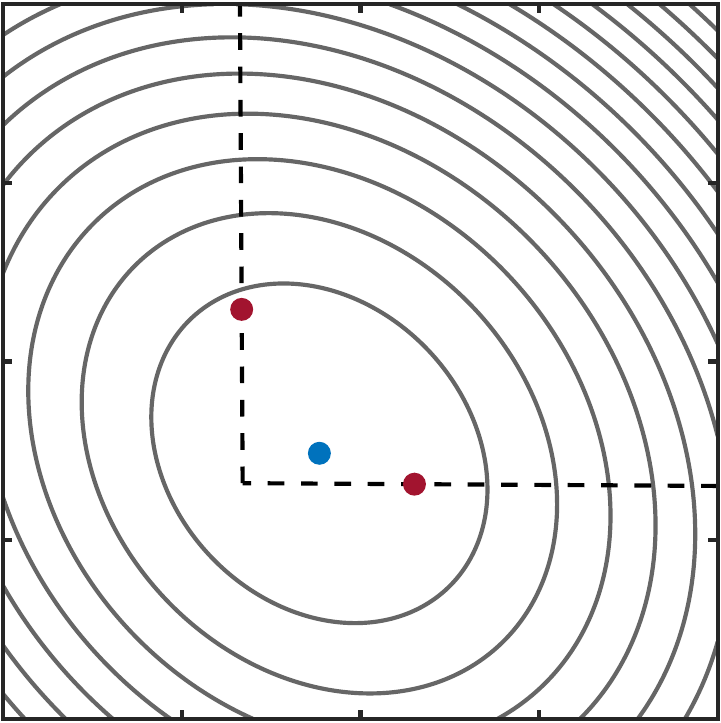}
        \put(41.5,102){\small$k = 3$}
        \put(-1,-5){\scriptsize0}
        \put(21,-5){\scriptsize0.5}
        \put(48.5,-5){\scriptsize1}
        \put(71,-5){\scriptsize1.5}
        \put(88,-5){\scriptsize2}
        \put(91,-4){\tiny$\times10^4$}
        \put(25,55){\small A}
        \put(55,25){\small B}
        \put(42.5,40.75){\small $x^{(4)}$}
        \put(35.5,76.5){\rotatebox{90}{\footnotesize$v_1\leq20$}}
        \put(77,36){\footnotesize$v_2\leq20$}
        \put(47,-11){\small$x_1$}
    \end{overpic} \hspace{0.25em}
    \begin{overpic}[scale=0.4]{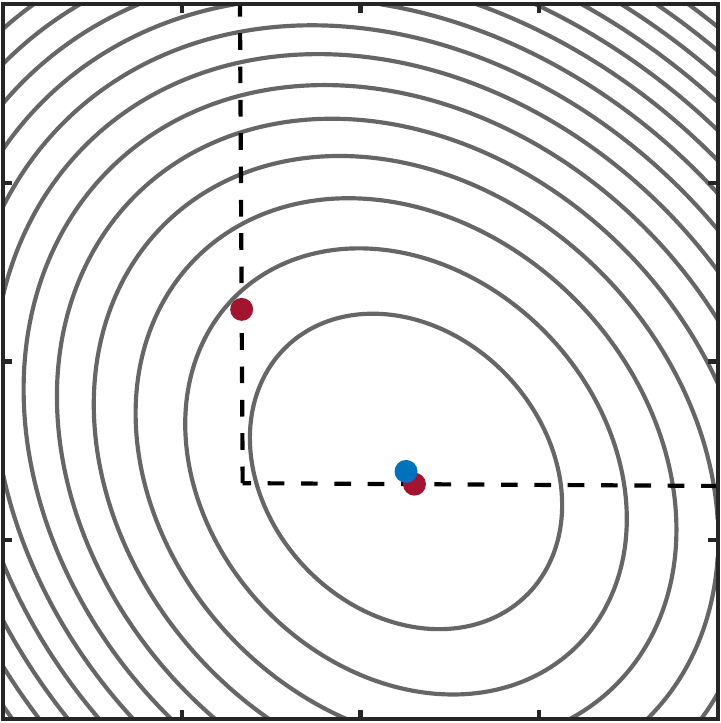}
        \put(41.5,102){\small$k = 32$}
        \put(-1,-5){\scriptsize0}
        \put(21,-5){\scriptsize0.5}
        \put(48.5,-5){\scriptsize1}
        \put(71,-5){\scriptsize1.5}
        \put(88,-5){\scriptsize2}
        \put(91,-4){\tiny$\times10^4$}
        \put(25,55){\small A}
        \put(55,25){\small B}
        \put(54,38){\small $x^{(33)}$}
        \put(35.5,76.5){\rotatebox{90}{\footnotesize$v_1\leq20$}}
        \put(77,36){\footnotesize$v_2\leq20$}
        \put(47,-11){\small$x_1$}
    \end{overpic} \vspace{3ex}
    \caption{Contours of~\eqref{eq:simple2} in $x$ for fixed iterates $w^{(k)}$ with $k = 0, 3, 32$.
    The points A and B correspond to the local and global minima of the idealized problem~$(\mathcal{P}_1)$.
    Starting with the zero vector $x^{(0)}$, the global minimum of the relaxed problem ($x^{(k+1)}$) moves towards the global minimum of the idealized problem (B).\label{fig:contour1}}
\end{figure}

In Figure~\ref{fig:contour1} (left), we initialize the problem with $x^{(0)}$ as the zero vector.
The contours of~$(\mathcal{P}_2)$ are more circular than those of~$(\mathcal{P}_1)$ in Figure~\ref{fig:simple}, indicating that the relaxation has improved the problem conditioning.
Additionally, as $w^{(k)}$ changes with each iteration, the global minimum of the objective function in terms of $x$ at point $x^{(k+1)}$ moves closer to the global minimum of~$(\mathcal{P}_1)$ at point B.
For stopping tolerance $\epsilon = 10^{-3}$, our algorithm converges after 32 iterations, and in Figure~\ref{fig:contour1} (right) we see that the global minimum of our relaxed problem, corresponding to our final beamlet intensities, is just shy of the global minimum of our original problem.
The dose--volume constraint is met approximately, and we may choose larger weight coefficient $\alpha_2$ or decrease the dose level $d_2^{\text{dv}}$ to move closer to the feasible region if necessary.

Next, we plot our iterates along with the contour lines of the objective function 
\begin{equation}
    \label{eq:simple3}
    g(w) = \min_{x \geq 0} \frac{1}{2} \| A_1 x - 81 \|_2^2 + \frac{5}{2} \| w - (A_2 x - 20) \|_2^2 + \frac{\lambda}{2} \| x \|_2^2
\end{equation}
in terms of the variable $w$.
In Figure~\ref{fig:contour2} (left), points A and B again correspond to the local and global minima of the idealized problem, and the nonconvex feasible region \mbox{$\Omega=\{w\in\mathbb{R}^2 \,:\, \| (w)_+ \|_0 \leq 1 \}$} is given by the union of regions $w_1 \leq 0$ and $w_2 \leq 0$.
The result of the gradient step is plotted as squares, and the projection onto $\Omega$ is plotted as circles.
Just as we saw in $x$-space, our algorithm in $w$-space converges to a point near the global minimum at B. 

\subsection{Initialization}

Because our formulation is nonconvex, the initialization can affect both the solution and the time to convergence.
For example, using the initialization $x^{(0)} = (0, 2\times10^3)$ for our simple example, we converge near the local minimum at A after 37 iterations (middle panel of Figure~\ref{fig:contour2}).
Fortunately, as demonstrated in other papers (see e.g., \cite{wu2002multiple, llacer2003absence}), local minima do not have a large effect on the solutions to these least-squares formulations, especially when the convex tumor terms have weights relative in size to the OAR term weights \cite{rowbottom2002configuration}.
Given enough time, different initializations tend to converge to  similar objective values, fluence maps, and dose distributions.

\begin{figure}
    \hspace{1em}
    \begin{overpic}[scale=0.4]{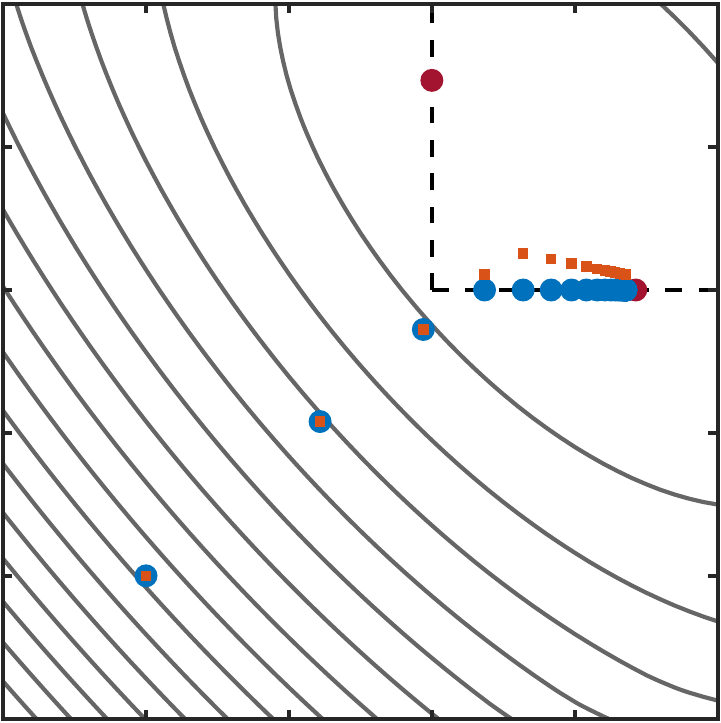}
        \put(-5,-5){\scriptsize-30}
        \put(15,-5){\scriptsize-20}
        \put(34.5,-5){\scriptsize-10}
        \put(58,-5){\scriptsize0}
        \put(76,-5){\scriptsize10}
        \put(96,-5){\scriptsize20}
        \put(-9,0){\scriptsize-30}
        \put(-9,18){\scriptsize-20}
        \put(-9,38){\scriptsize-10}
        \put(-4,58){\scriptsize0}
        \put(-7,78){\scriptsize10}
        \put(-7,98){\scriptsize20}
        \put(52.5,86){\small A}
        \put(85.5,51.5){\small B}
        \put(18,13){\small $w^{(0)}$}
        \put(47,-11){\small$w_1$}
        \put(-13,48){\rotatebox{90}{\small$w_2$}}
    \end{overpic} \hspace{1em}
    \begin{overpic}[scale=0.4]{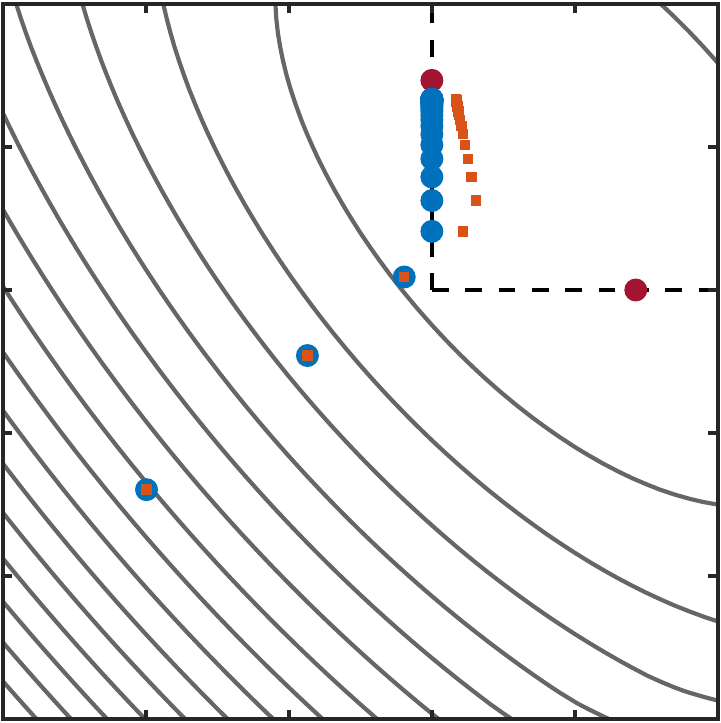}
        \put(-5,-5){\scriptsize-30}
        \put(15,-5){\scriptsize-20}
        \put(34.5,-5){\scriptsize-10}
        \put(58,-5){\scriptsize0}
        \put(76,-5){\scriptsize10}
        \put(96,-5){\scriptsize20}
        \put(-9,0){\scriptsize-30}
        \put(-9,18){\scriptsize-20}
        \put(-9,38){\scriptsize-10}
        \put(-4,58){\scriptsize0}
        \put(-7,78){\scriptsize10}
        \put(-7,98){\scriptsize20}
        \put(52.5,86){\small A}
        \put(85.5,51.5){\small B}
        \put(18,25){\small $w^{(0)}$}
        \put(47,-11){\small$w_1$}
    \end{overpic} \hspace{1em}
    \begin{overpic}[scale=0.4]{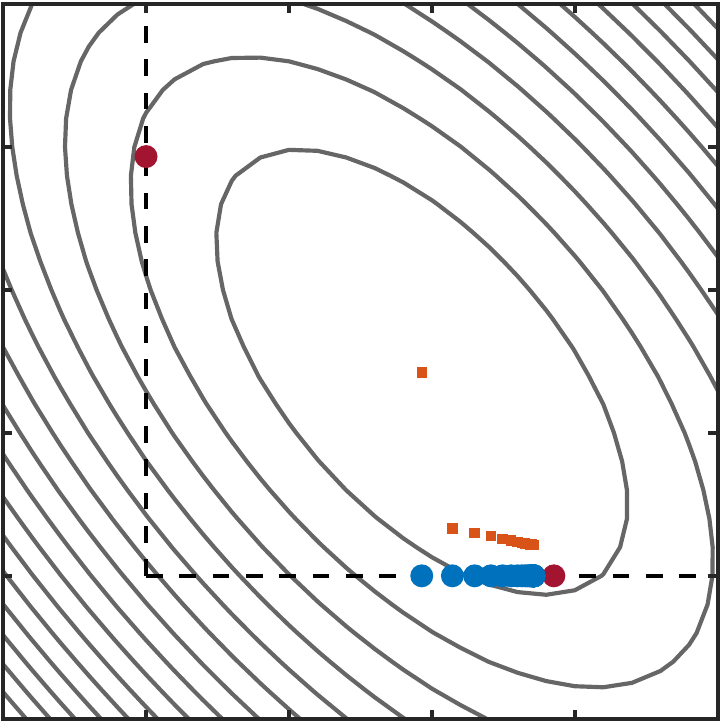}
        \put(-1,-5){\scriptsize-5}
        \put(18.5,-5){\scriptsize0}
        \put(38.5,-5){\scriptsize5}
        \put(56,-5){\scriptsize10}
        \put(76,-5){\scriptsize15}
        \put(96,-5){\scriptsize20}
        \put(-6.5,0){\scriptsize-5}
        \put(-4,18){\scriptsize0}
        \put(-4,38){\scriptsize5}
        \put(-7,58){\scriptsize10}
        \put(-7,78){\scriptsize15}
        \put(-7,98){\scriptsize20}
        \put(12,76){\small A}
        \put(74.5,12){\small B}
        \put(45,13){\small $w^{(0)}$}
        \put(47,-11){\small$w_1$}
    \end{overpic} \vspace{3ex}
    \caption{Contours of~\eqref{eq:simple3} with respect to $w$.
    The points A and B correspond to the local and global minima of the idealized problem~$(\mathcal{P}_1)$, the squares correspond to the iterates $w^{(k)}$ before projection onto the feasible set $\Omega$, and the circles correspond to the iterates after projection.
    Left: Starting with the zero vector $x^{(0)}$, we converge after 32 iterations at a point near the global minimum at B.
    Center: Starting with the initialization $x^{(0)} = (0,2\times10^3)$, we converge near the local minimum at A after 37 iterations.
    Right: Starting with the initialization $x^{(0)} = \argmin_{x \geq 0} \sfrac{1}{2} \| A_1 x - 81 \|_2^2 + \sfrac{\lambda}{2} \| x \|_2^2$, we converge near the global minimum at B after 26 iterations.\label{fig:contour2}}
\end{figure}

For the remainder of our examples, we initialize the beamlet intensity vector as the solution to $(\mathcal{P}_2)$ without any dose--volume terms, i.e., 
\begin{equation}
    \label{eq:init}
    x^{(0)} = \argmin_{x \geq 0} \sum_{i \in \mathcal{I}} \frac{\alpha_i}{2n_i} \| A_i x - d_i \|_2^2 + \frac{\lambda}{2} \| x \|_2^2.
\end{equation}
In Figure~\ref{fig:contour2} (right), we use this initialization for our simple example, converging near the global minimum at B after 26 iterations.
For this initialization, we see that $A_2x^{(0)} - 20$, the value of $w^{(0)}$ before projection onto $\Omega$, corresponds to the global minimum of $g(w)$.
This is true for the general problem $(\mathcal{P}_2)$ as well.
Specifically, for all $j \in \mathcal{J}^\text{dv}$ we have
\begin{equation} 
    \argmin_{w_j} f(x,w) = A_j x - d_j^\text{dv},
\end{equation}
which means that
\begin{equation}
    \min_{w} f(x,w) = \sum_{i \in \mathcal{I}} \frac{\alpha_i}{2n_i} \| A_i x - d_i \|_2^2 + \frac{\lambda}{2} \| x \|_2^2. 
\end{equation}
Therefore letting $x^{(0)} = \argmin_{x \geq 0} \left( \min_w f(x,w) \right)$, we are guaranteed that
\begin{equation} 
    A_j x^{(0)} - d_j^\text{dv} = \argmin_{w_j} g(w)
\end{equation} 
for all $j \in \mathcal{J}^\text{dv}$ because
\begin{equation} 
    \min_{x \geq 0} \left( \min_w f(x,w) \right) = \min_w g(w).
\end{equation}
Since we are ultimately interested in solving $\min_{w\in\Omega} g(w)$, using an initialization for $x^{(0)}$ that corresponds to $w^{(0)} = \text{proj}_{\Omega}(\argmin_w g(w))$ is a natural choice.

\section{Numerical Results}
\label{sec:results}

Our examples are performed on data from an anonymized prostate cancer patient included in the CORT dataset \cite{craft2014shared}, available in the GigaScience repository, GigaDB \cite{craft2014shared2}.
The dataset includes beamlet-to-voxel maps for beam angles ranging from $0^\circ$ to $358^\circ$ in increments of $2^\circ$, PTVs in the prostate and lymph nodes, and OARs including the rectum, bladder, femoral heads, and unspecified tissue (Figure~\ref{fig:CORT}).
In the CORT dataset, beamlet intensities are measured in monitor units (MU), where 100 MU delivers a dose of 1 Gy in 10 cm depth in water in the center of a 10 cm $\times$ 10 cm radiation field \cite{craft2014shared}.

For a fixed set of beams, we consider the problem of treating tumors with dose--volume constraints of increasing difficulty on various combinations of critical structures.
In Section~\ref{sec:onePoneOoneDV}, we look at an example with one PTV and one OAR, and show how the proposed approach works for a single dose--volume constraint on the OAR.
In Section~\ref{sec:onePoneOmulDV}, we increase the complexity of the previous example by imposing multiple dose--volume constraints on the same PTV and OAR.
In Section~\ref{sec:mulPTVmulOAR}, we show how the approach handles multiple PTVs and OARs.
Finally, in Section~\ref{sec:compare}, we compare our approach to other published methods.
For all of our examples, we use seven equally spaced beams with angles ranging from $0^\circ$ to $312^\circ$ in increments of $52^\circ$.
We set our regularization parameter at $\lambda = 10^{-8}$, our stopping tolerance at $\epsilon = 10^{-3}$, and for simplicity we let $\alpha_i = \alpha_j = 1$ for all $i \in \mathcal{I}$ and all $j \in \mathcal{J}^\text{dv}$, unless otherwise stated.
Whenever there is overlap between PTVs and OARs, the voxels in common are assigned to the PTV.
However, this choice is not required for our model, and a single voxel may be assigned to multiple structures if desired.
All examples were run on a computer with a 2.9 GHz dual-core Intel Core i5 processor with 8 GB RAM.

\begin{figure}
    \centering
    \begin{overpic}[scale=0.45]{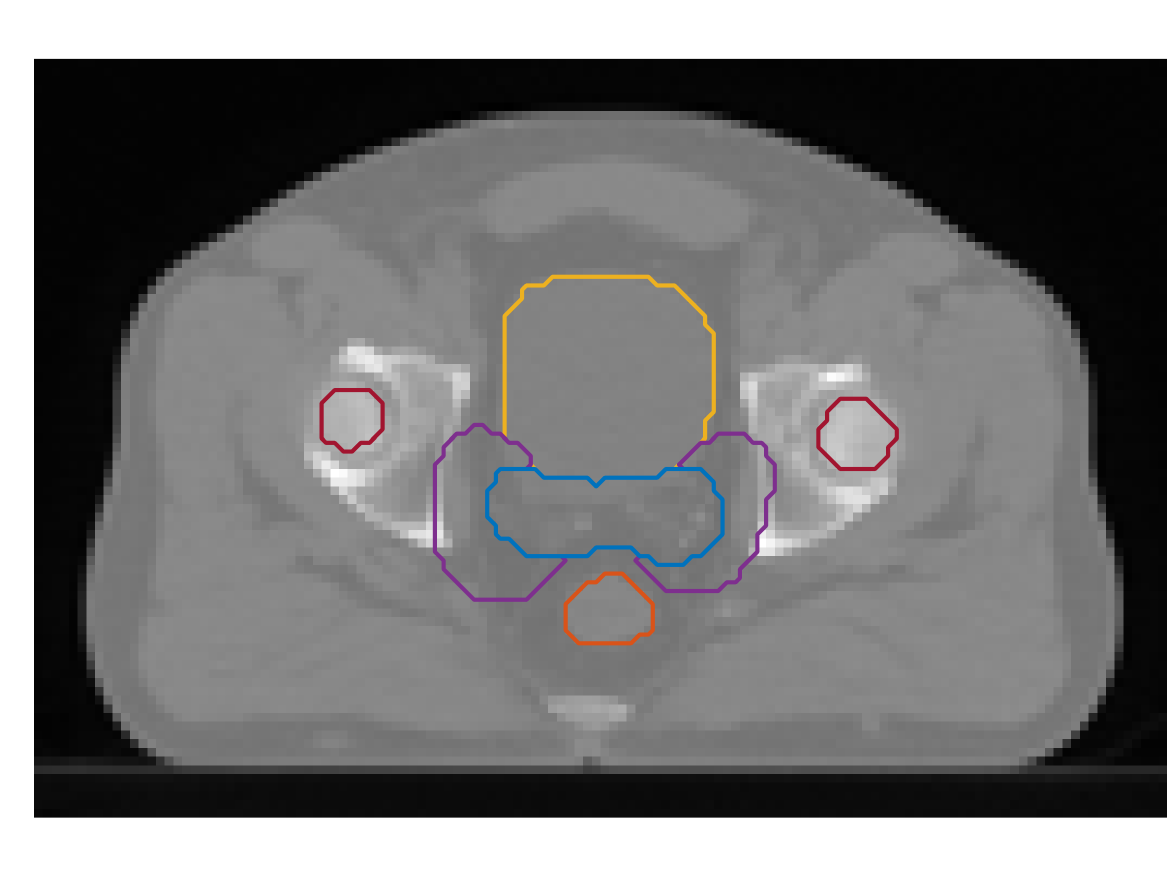}
        \put(44.5,30){\small\color{prostate}Prostate}
        \put(67.5,30){\small\color{nodes}Lymph}
        \put(67.5,26){\small\color{nodes}Nodes}
        \put(45,16){\small\color{rectum}Rectum}
        \put(45,45){\small\color{bladder}Bladder}
        \put(15.5,43){\small\color{femheads}Femoral}
        \put(15.5,39){\small\color{femheads}Heads}
        \put(36,57){\small Unspecified Tissue}
    \end{overpic}
    \vspace{-2ex}
    \caption{Organs from the CORT prostate case on a representative axial CT slice.
    The prostate corresponds to planning target volume PTV\_68 in the CORT dataset, while the lymph nodes correspond to planning target volume PTV\_56.\label{fig:CORT}}
\end{figure}

We use the cumulative dose--volume histogram as a way of evaluating the quality of the resulting treatment plans.
In a dose--volume histogram, each point represents the percent of the organ volume that receives at least a particular dose.
For example, we aim to deliver a uniform dose of 81 Gy to the prostate tumor, corresponding to the dose--volume histogram on the left panel of Figure~\ref{fig:ptvDvh}.
In this case, 100\% of the organ volume receives exactly 81 Gy.
As we cannot deliver a perfectly uniform dose, our goal is to find a treatment plan that is as close as possible to this idealized dose profile.
For instance, in the right panel of Figure~\ref{fig:ptvDvh} we see the dose--volume histogram of the solution to \eqref{eq:init} for one prostate tumor with regularization parameter $\lambda = 10^{-8}$.
We use the beamlet intensities from this solution as our initialization for our examples in Sections~\ref{sec:onePoneOoneDV}, \ref{sec:onePoneOmulDV}, and \ref{sec:compareOne}.

\begin{figure}
    \centering
    \hspace{1em}
    \begin{overpic}[scale=0.4]{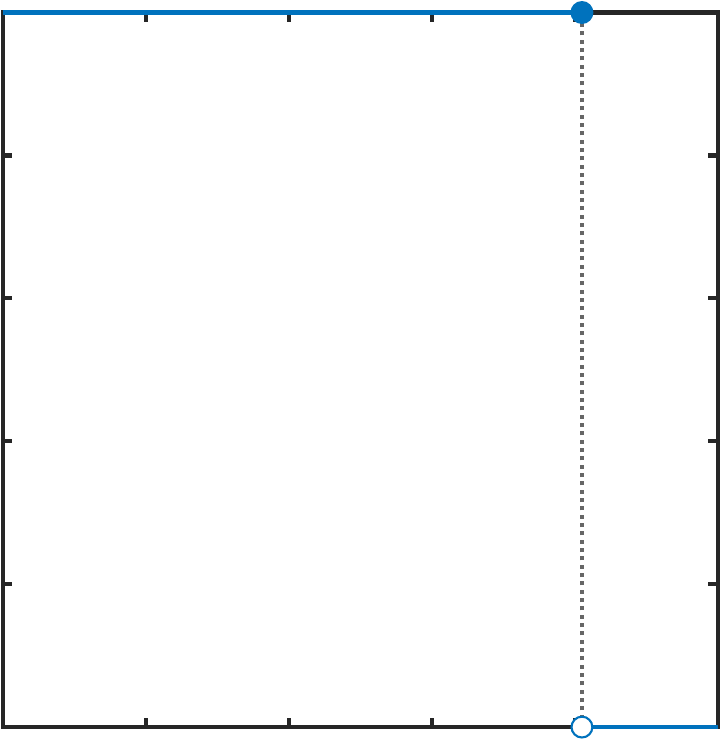}
        \put(-1,-4){\scriptsize0}
        \put(16,-4){\scriptsize20}
        \put(35.5,-4){\scriptsize40}
        \put(55,-4){\scriptsize60}
        \put(75,-4){\scriptsize80}
        \put(92,-4){\scriptsize100}
        \put(-4,0){\scriptsize0}
        \put(-7,19){\scriptsize20}
        \put(-7,38){\scriptsize40}
        \put(-7,58){\scriptsize60}
        \put(-7,77){\scriptsize80}
        \put(-10,96){\scriptsize100}
        \put(34,-12){\small Dose (Gy)}
        \put(-20,18){\rotatebox{90}{\small Relative Volume (\%)}}
    \end{overpic} \hspace{1em}
    \begin{overpic}[scale=0.4]{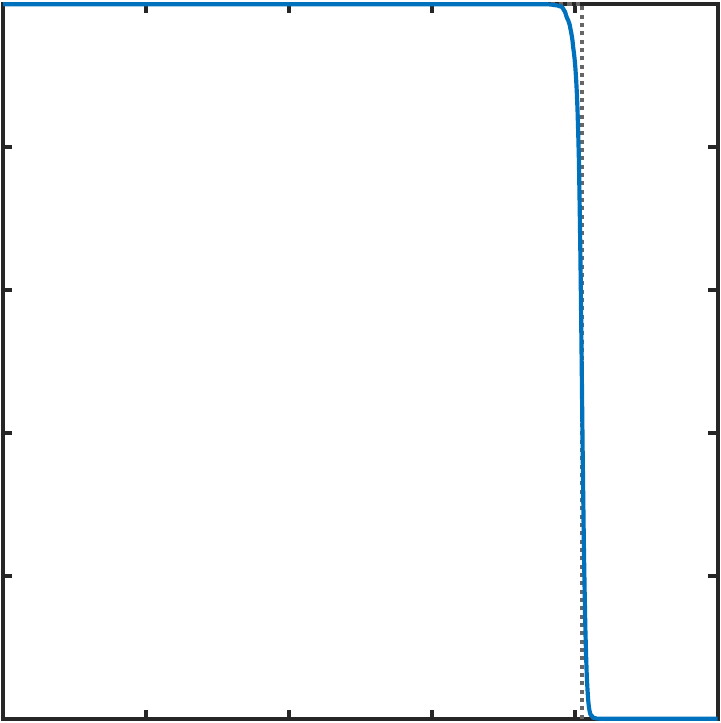}
        \put(-1,-5){\scriptsize0}
        \put(17,-5){\scriptsize20}
        \put(36.5,-5){\scriptsize40}
        \put(57,-5){\scriptsize60}
        \put(76,-5){\scriptsize80}
        \put(94,-5){\scriptsize100}
        \put(35,-13){\small Dose (Gy)}
    \end{overpic}
    \vspace{4ex}
    \caption{Dose--volume histograms for a uniform PTV prescription of 81 Gy.
    Left: An ideal dose profile, with 100\% of the target volume receiving exactly 81 Gy.
    Right: A more realistic dose profile, corresponding to our initialization for Sections~\ref{sec:onePoneOoneDV} and \ref{sec:onePoneOmulDV}, obtained by solving \eqref{eq:init} for one prostate tumor with regularization parameter $\lambda = 10^{-8}$.\label{fig:ptvDvh}}
\end{figure}

Cumulative dose--volume histograms can be used to determine whether or not an upper dose--volume constraint on an OAR has been met by a particular treatment plan.
For example, we consider the dose--volume constraint that no more than 50\% of the rectum volume exceeds 50 Gy.
In Figure~\ref{fig:oarDvh}, the rectangular region within the dotted lines contains all points where at most 50\% of the rectum volume receives more than 50 Gy.
Therefore any dose curve that lies outside of the box does not meet the constraint, while any curve that travels inside the box meets the constraint.
In the left panel of Figure~\ref{fig:oarDvh}, approximately 57\% of the rectum volume receives more than 50 Gy, so the dose--volume constraint is not met.
However, on the right panel of Figure~\ref{fig:oarDvh} approximately 42\% of the rectum volume receives more than 50 Gy, so the dose--volume constraint is met. 

\begin{figure}
    \centering
    \hspace{1em}
    \begin{overpic}[scale=0.4]{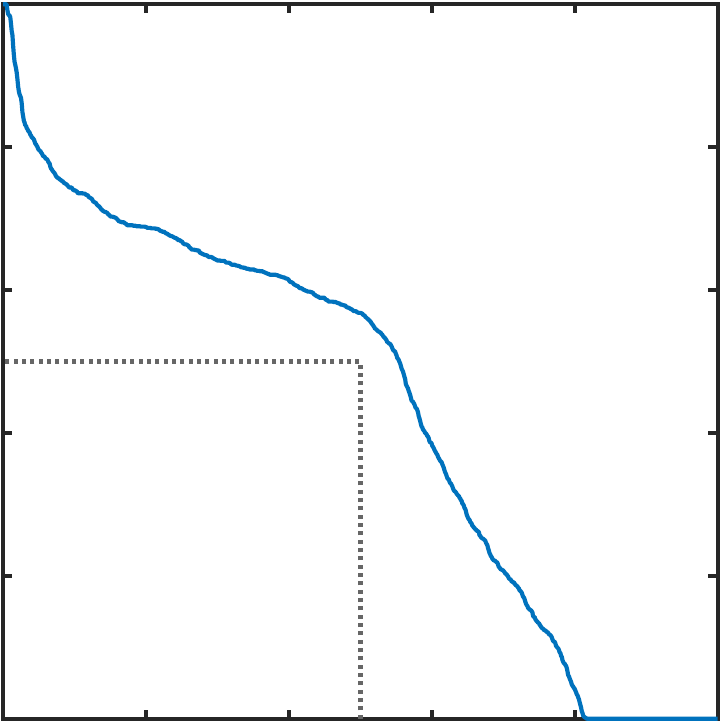}
        \put(-1,-5){\scriptsize0}
        \put(17,-5){\scriptsize20}
        \put(36.5,-5){\scriptsize40}
        \put(57,-5){\scriptsize60}
        \put(76,-5){\scriptsize80}
        \put(94,-5){\scriptsize100}
        \put(-4,-1){\scriptsize0}
        \put(-7,18){\scriptsize20}
        \put(-7,38){\scriptsize40}
        \put(-7,58){\scriptsize60}
        \put(-7,78){\scriptsize80}
        \put(-10,97){\scriptsize100}
        \put(35,-13){\small Dose (Gy)}
        \put(-20,18){\rotatebox{90}{\small Relative Volume (\%)}}
    \end{overpic} \hspace{1em}
    \begin{overpic}[scale=0.4]{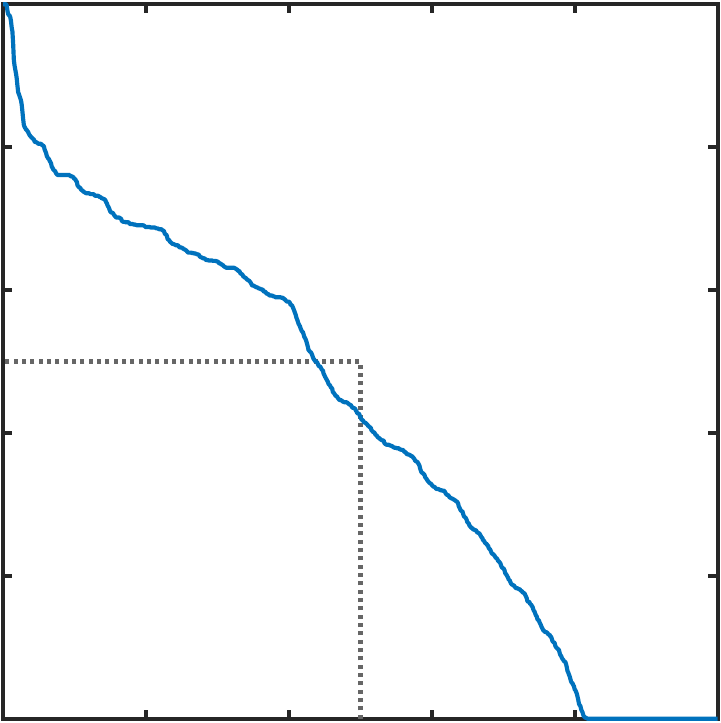}
        \put(-1,-5){\scriptsize0}
        \put(17,-5){\scriptsize20}
        \put(36.5,-5){\scriptsize40}
        \put(57,-5){\scriptsize60}
        \put(76,-5){\scriptsize80}
        \put(94,-5){\scriptsize100}
        \put(35,-13){\small Dose (Gy)}
    \end{overpic}
    \vspace{4ex}
    \caption{Dose--volume histograms for an OAR dose--volume constraint that no more than 50\% of the rectum volume exceeds 50 Gy.
    The solid lines correspond to the dose that the organ receives, and the dotted lines enclose the region that satisfies the dose--volume constraint.
    Left: A treatment plan that does not meet the dose--volume constraint, with approximately 57\% of the organ volume exceeding 50 Gy.
    Right: A treatment plan that does meet the dose--volume constraint, with approximately 42\% of the organ volume exceeding 50 Gy.\label{fig:oarDvh}}
\end{figure}

\subsection{One PTV and one OAR with one dose--volume constraint}
\label{sec:onePoneOoneDV}

For our first example, we consider the problem of delivering a uniform dose of 81 Gy to a prostate tumor, while satisfying the dose--volume constraint that no more than 30\% of the rectum volume receives more than 30 Gy.
In this case, the PTV has 6770 voxels, the OAR has 1764 voxels, and there are 986 beamlets.
In Figure~\ref{fig:dose1}, we see a representative axial slice of the calculated dose along with the intensities of four of the beams (due to symmetry, the remaining three beams have similar intensity patterns).
In this slice, a nearly uniform dose of 81 Gy is delivered to the tumor, while only a small percentage of the rectum volume receives more than 30 Gy.

\begin{figure}
    \centering
    \begin{minipage}[T]{0.45\textwidth} 
        \begin{overpic}[scale=0.45]{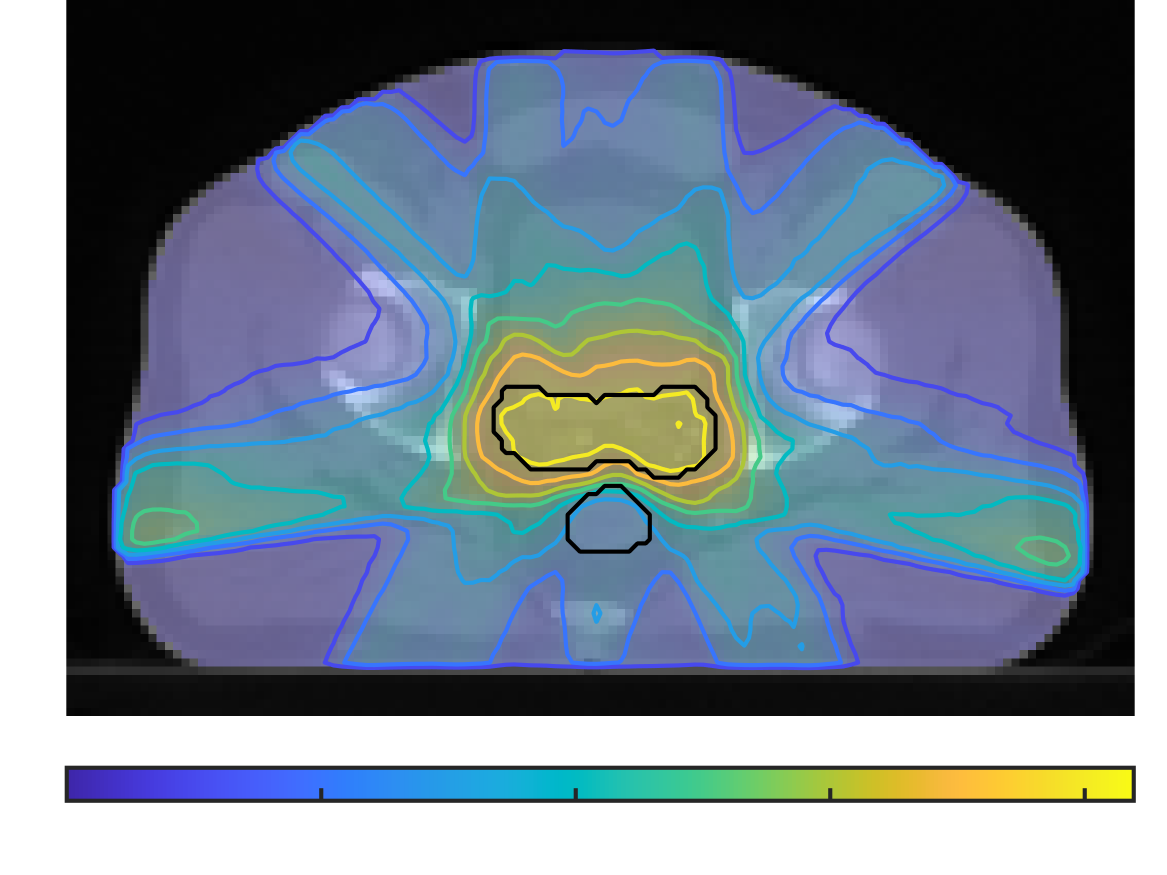}
        	\put(5,3){\scriptsize0}
        	\put(25.5,3){\scriptsize20}
        	\put(47.5,3){\scriptsize40}
        	\put(69.5,3){\scriptsize60}
        	\put(91.5,3){\scriptsize80}
            \put(43,-1){\scriptsize Dose (Gy)}
        	\put(44.5,37){\small Prostate}
        	\put(45,24){\small Rectum}
        	\put(51,71.5){\small\color{white}$0^\circ$}
        	\put(77,64){\small\color{white}$52^\circ$}
        	\put(89,29){\small\color{white}$104^\circ$}
        	\put(64,14.5){\small\color{white}$156^\circ$}
    	\end{overpic}
    \end{minipage} \hspace{4em}
    \begin{minipage}[T]{0.4\textwidth}
    	\begin{overpic}[scale=0.425]{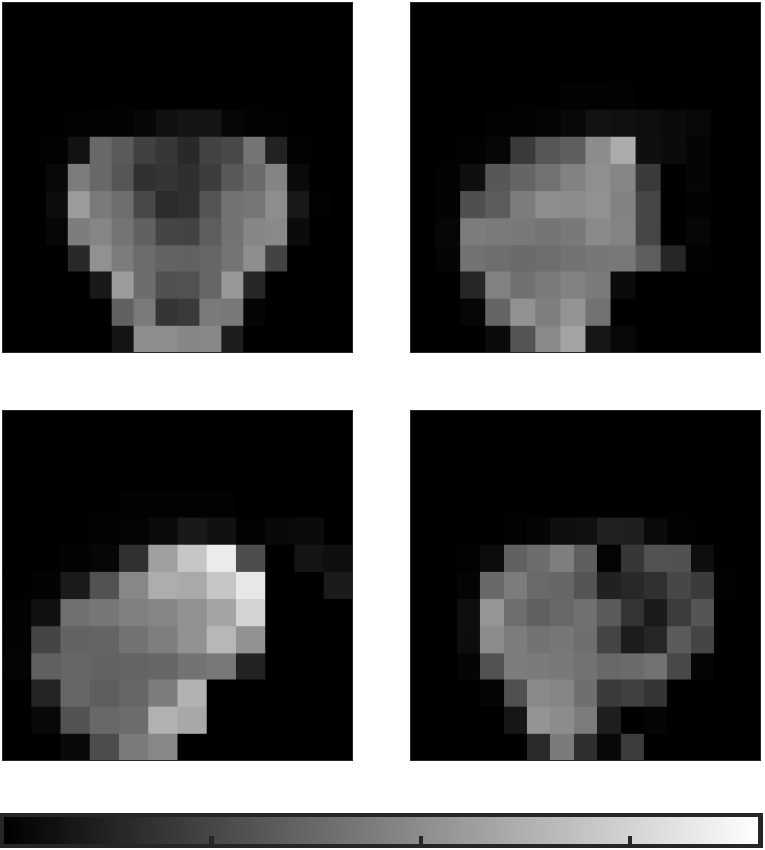}
        	\put(2,94){\small\color{white}$0^\circ$}
        	\put(50,94){\small\color{white}$52^\circ$}
        	\put(2,45.5){\small\color{white}$104^\circ$}
        	\put(50,45.5){\small\color{white}$156^\circ$}
        	\put(-0.5,-5){\scriptsize0}
        	\put(19.5,-5){\scriptsize1000}
        	\put(44.5,-5){\scriptsize2000}
        	\put(69,-5){\scriptsize3000}
            \put(20,-11){\scriptsize Beamlet Intensity (MU)}
    	\end{overpic}
    \vspace{3.5ex}
    \end{minipage} \vspace{2ex}
    \caption{Solution for example in Section~\ref{sec:onePoneOoneDV}.
    Left: The prostate tumor is irradiated by seven equally spaced beams with a dose--volume constraint on the rectum.
    Right: Beamlets intensities for four of the seven beams, calculated to deliver a nearly uniform dose of 81 Gy to the PTV while ensuring that no more than 30\% of the OAR volume receives more than 30 Gy.
    Due to symmetry, the remaining three beams have similar intensity patterns.\label{fig:dose1}}
\end{figure}

Figure~\ref{fig:obj1} shows the behavior of our model using Algorithm~\ref{algo:bcd}.
The objective $g(w)$ decreases monotonically (top left), and Algorithm~\ref{algo:bcd} converges after 42 iterations.
The trade-off between the competing PTV and OAR objectives is shown in the remaining left panels of Figure~\ref{fig:obj1}.
Because our initial beamlet intensities correspond to a solution with no dose--volume constraints, at initialization the tumor term achieves its lowest value and the rectum term achieves its highest value.
Once we begin iterating, the inclusion of the OAR term causes the tumor term value to increase and the rectum term to decrease.
On the right panel of Figure~\ref{fig:obj1}, we show the behavior of the auxiliary variable $w$ (top right) and our approximation of the dose--volume constraint (bottom right).
The percent of rectum voxels exceeding 30 Gy approaches the dose--volume constraint; however, because of the relaxation, we do not meet the dose--volume constraint exactly.
Nevertheless, we improve upon the initialization value according to the trade-off implicitly specified by our weights $\alpha_1$ and $\alpha_2$.

\begin{figure}
    \centering
    \begin{minipage}[T]{0.45\textwidth} \hspace{4em}
        \begin{overpic}[scale=0.4]{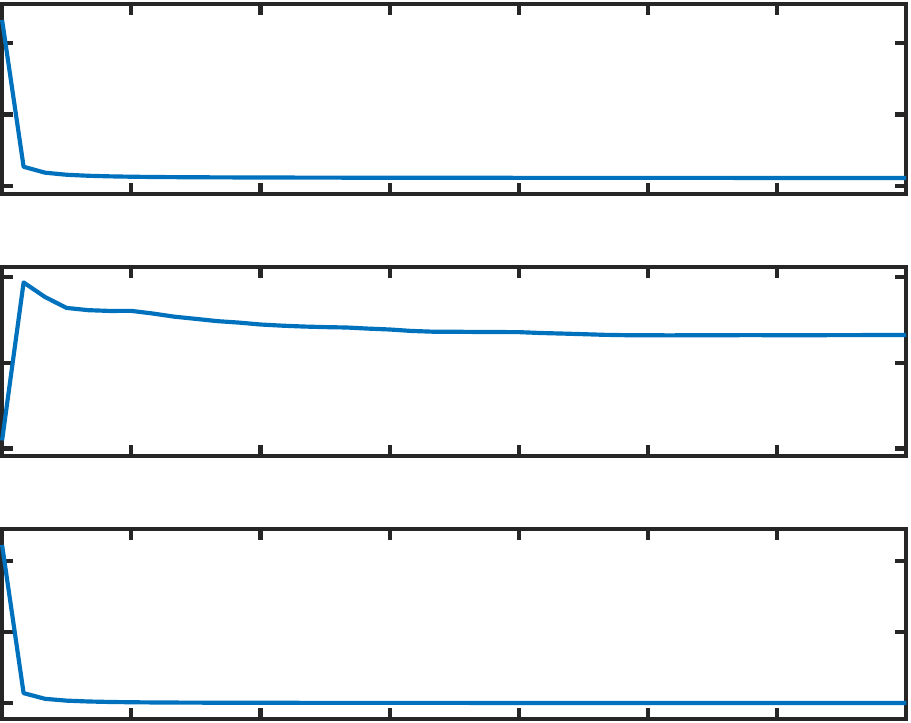}
            \put(-1,-4){\scriptsize0}
        	    \put(13,-4){\scriptsize6}
        	    \put(26,-4){\scriptsize12}
        	    \put(40,-4){\scriptsize18}
        	    \put(54.5,-4){\scriptsize24}
        	    \put(68.5,-4){\scriptsize30}
        	    \put(83,-4){\scriptsize36}
        	    \put(97,-4){\scriptsize42}
        	    \put(-3,0.5){\scriptsize0}
        	    \put(-5.5,8.5){\scriptsize50}
            \put(-8,16){\scriptsize100}
        	    \put(-7,28.5){\scriptsize0.2}
        	    \put(-7,38.5){\scriptsize0.4}
            \put(-7,47.5){\scriptsize0.6}
        	    \put(-3,57.5){\scriptsize0}
        	    \put(-5.5,65){\scriptsize50}
        	    \put(-8,73){\scriptsize100}
            \put(-13,-2){\rotatebox{90}{\scriptsize OAR Term}}
        	    \put(-13,28){\rotatebox{90}{\scriptsize PTV Term}}
        	    \put(-13,58.5){\rotatebox{90}{\scriptsize Objective}}
       	    \put(38,-9){\scriptsize Iteration (k)}
        \end{overpic}
    \end{minipage} \hspace{2em}
    \begin{minipage}[T]{0.45\textwidth}
        \hspace{1em}
    	\begin{overpic}[scale=0.4]{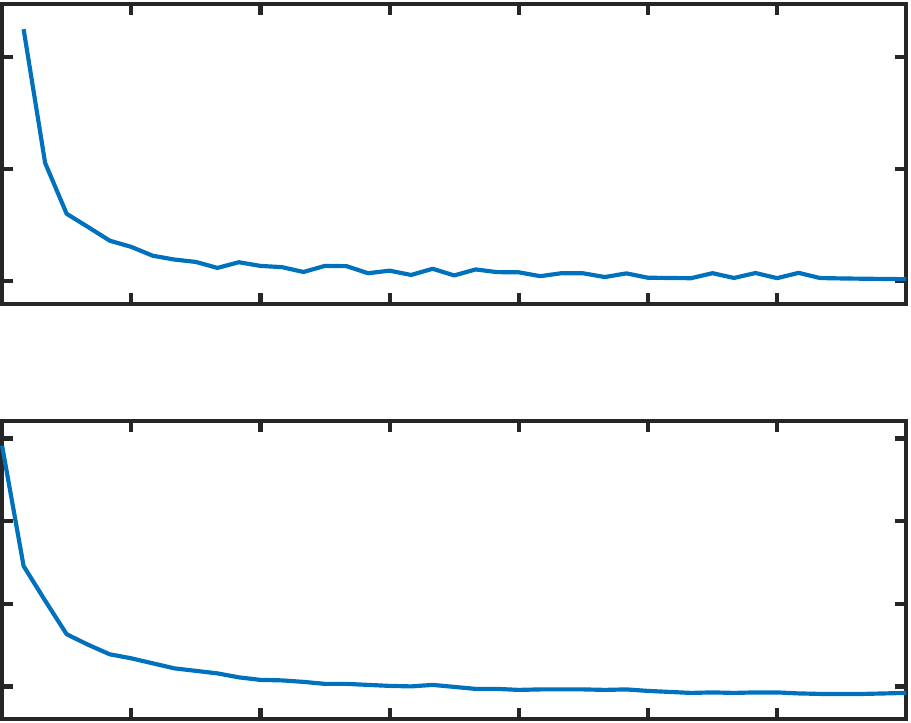}
        	    \put(-1,-4){\scriptsize0}
        	    \put(13,-4){\scriptsize6}
        	    \put(26,-4){\scriptsize12}
        	    \put(40,-4){\scriptsize18}
        	    \put(54.5,-4){\scriptsize24}
        	    \put(68.5,-4){\scriptsize30}
        	    \put(83,-4){\scriptsize36}
        	    \put(97,-4){\scriptsize42}
        	    \put(-5.5,2.5){\scriptsize35}
        	    \put(-5.5,11.5){\scriptsize45}
        	    \put(-5.5,20.5){\scriptsize55}
        	    \put(-5.5,29.5){\scriptsize65}
        	    \put(-3.5,47){\scriptsize0}
        	    \put(-3.5,59){\scriptsize5}
        	    \put(-5.5,71.5){\scriptsize10}
        	    \put(0,80){\tiny$\times10^{-2}$}
        	    \put(-10.5,46){\rotatebox{90}{\scriptsize Conv. Criteria}}
        	    \put(-11,-2){\rotatebox{90}{\scriptsize\% OAR $>$ 30 Gy}}
        	    \put(38,-9){\scriptsize Iteration (k)}
        \end{overpic}
    \end{minipage}
    \vspace{5ex}
    \caption{Convergence of example in Section~\ref{sec:onePoneOoneDV} using Algorithm~\ref{algo:bcd}.
    Left: Iterates of the objective function $g(w)$ and individual organ terms (PTV term = $\sfrac{1}{(2\times6770)} \| A_1 x - 81 \|_2^2$ and OAR term = $\sfrac{1}{(2\times1764)} \| w - (A_2x - 30) \|_2^2$).
    While both $g(w)$ (top) and the OAR term (bottom) decrease monotonically, the PTV term (middle) adjusts to accommodate the competing OAR objective.
    Right: Convergence of the auxiliary variable measured by $\sfrac{1}{t} \| w^{(k)} - w^{(k-1)} \|_2$ (top) and our approximation of the dose--volume constraint (bottom).\label{fig:obj1}}
\end{figure}

To demonstrate how the weight factors and dose--volume parameters can influence solutions, we implement a variation of the penalty decomposition method for the convex-cardinality problem proposed in \cite{lu2010penalty}.
Specifically, we use the iterative re-weighting scheme given in Algorithm~\ref{algo:reweight}, where ($\mathcal{P}_2$) is solved multiple times with increasing OAR weights, decreasing dose--volume parameters, and decreasing stopping tolerance values. 
In this example, we use initial OAR weight $\alpha_2^{(0)} = 1$, dose parameter $d_2^{\text{dv}(0)} = 30$, volume parameter $p_2^{(0)} = 30$, and stopping tolerance $\epsilon^{(0)} = 10^{-3}$, with weight update parameter $\sigma = 0.01$ and stopping tolerance update parameter $\gamma = 0.99$.

\begin{figure}[H]
    \centering
    \begin{minipage}{0.85\linewidth}
        \begin{algorithm}[H]
            \caption{Iterative re-weighting for Algorithm~\ref{algo:bcd}.\label{algo:reweight}}
            \begin{algorithmic}
                \State{Input $\sigma$, $\gamma$, $x^{(0)}$. Initialize $k = 0$, $\alpha_j^{(0)}$, $d_j^{\text{dv}(0)}$, $p_j^{(0)}$, $\epsilon^{(0)}$.}
                \While{not converged}
                    \State{$\left(x^{(k+1)},w^{(k+1)}\right) \leftarrow$ solution to ($\mathcal{P}_2$) from Algorithm~\ref{algo:bcd} with $\left(x^{(k)}, \alpha_j^{(k)}, d_j^{\text{dv}(k)}, p_j^{(k)}, \epsilon^{(k)}\right)$}
	            \For{$j \in \mathcal{J}^\text{dv}$}
	                \If{$\big\|\big(A_jx^{(k+1)} - d_j^{\text{dv}(0)}\big)_+\big\|_0 > \frac{n_jp_j^{(0)}}{100}$}
		            \State{$\alpha_j^{(k+1)} \leftarrow (1 + \sigma)\alpha_j^{(k)}$}
		            \State{$d_j^{\text{dv}(k+1)} \leftarrow (1 - \sigma)d_j^{\text{dv}(k)}$}
	                    \State{$p_j^{(k+1)} \leftarrow (1 - \sigma)p_j^{(k)}$}
	                \EndIf
	            \EndFor
	            \State{$\epsilon^{(k+1)} \leftarrow \gamma\epsilon^{(k)}$}
                    \State{$k \leftarrow k + 1$}
                \EndWhile
                \State \Return{$x^{(k)}$}
            \end{algorithmic}
        \end{algorithm}
    \end{minipage}
\end{figure}

We compare the approximation of the dose--volume constraint in our solutions with and without re-weighting in Table~\ref{tab:ex1} and Figure~\ref{fig:dvh1}.
To illustrate the trade-off between the competing PTV and OAR objectives, we include results from Algorithm~\ref{algo:reweight} using different convergence criteria: (a) stopping once the dose--volume constraint is met, and (b) stopping once the PTV D95 (i.e., the minimum dose delivered to 95\% of the PTV) is less than 98\% of its initial value, where weight factors and dose--volume parameters are updated regardless of whether or not the dose--volume constraint has been satisfied.
There is little difference between the initial tumor dose and the final tumor dose for each solution, both in terms of the PTV D95 and the dose--volume histogram.
At the same time, all solutions have reduced the dose to the rectum to approximately satisfy the dose--volume constraint, where re-weighting produces a lower dose at the expense of increased computation time.
Without re-weighting, the constraint is roughly met, with only 34.16\% of the volume receiving more than 30 Gy, in contrast to the initialization, where 64.14\% of the volume exceeds 30 Gy.
With re-weighting, the constraint is satisfied, with 29.61\% and 19.84\% of the volume receiving more then 30 Gy for stopping conditions (a) and (b), respectively.

\begin{figure}
    \centering
    \begin{table}[H]
        \centering
        \footnotesize
        \caption{Results for Section~\ref{sec:onePoneOoneDV}.\label{tab:ex1}}
        \begin{tabular}{cccc}
            \hline
            & PTV & OAR \\
            & D95 (Gy) & \% $>$ 30 Gy & Time (s) \\
            \hline
            Initialization & 79.65 & 64.14 & 0.18 \\
            Solution w/ Alg.~\ref{algo:bcd} & 79.17 & 34.16 & 7.22 \\
            Solution w/ Alg.~\ref{algo:reweight} (a) & 79.03 & 29.61 & 33.61 \\
            Solution w/ Alg.~\ref{algo:reweight} (b) & 78.05 & 19.84 & 163.51 \\
            \hline
        \end{tabular}
    \end{table}

    \begin{overpic}[scale=0.4]{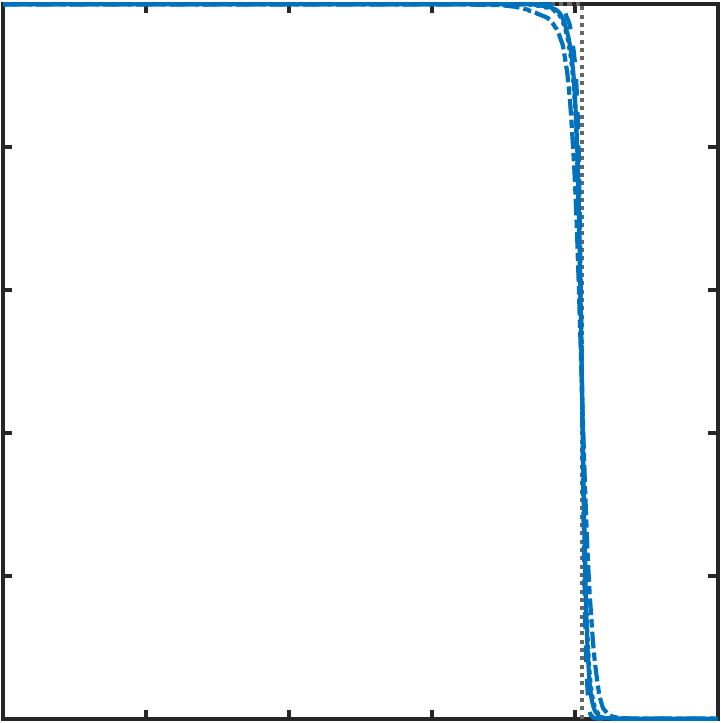}
        \put(-1,-5){\scriptsize0}
        \put(17,-5){\scriptsize20}
        \put(36.5,-5){\scriptsize40}
        \put(57,-5){\scriptsize60}
        \put(76,-5){\scriptsize80}
        \put(94,-5){\scriptsize100}
        \put(-4,-1){\scriptsize0}
        \put(-7,18){\scriptsize20}
        \put(-7,38){\scriptsize40}
        \put(-7,58){\scriptsize60}
        \put(-7,78){\scriptsize80}
        \put(-10,97){\scriptsize100}
        \put(35,-13){\small Dose (Gy)}
        \put(-20,18){\rotatebox{90}{\small Relative Volume (\%)}}
        \put(6,90){\small Prostate}
    \end{overpic} \hspace{1em}
    \begin{overpic}[scale=0.4]{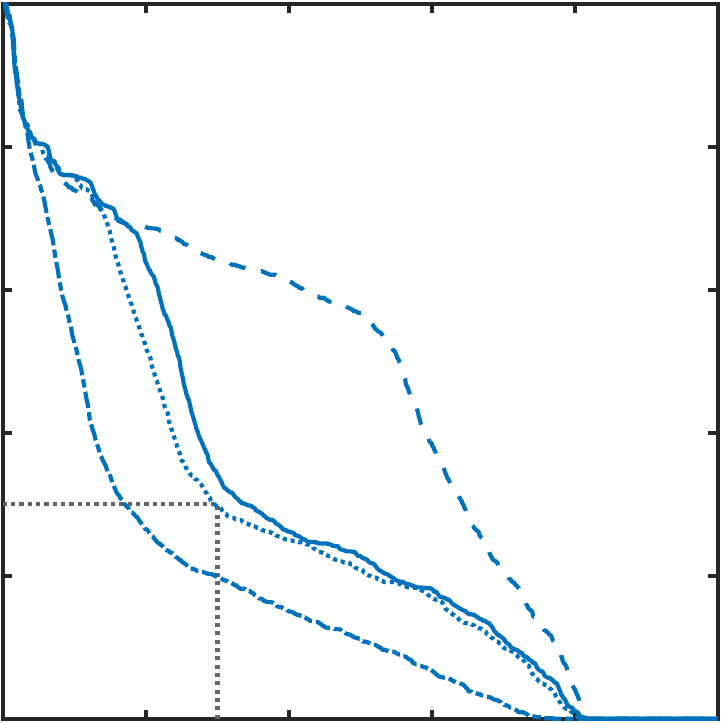}
        \put(-1,-5){\scriptsize0}
        \put(17,-5){\scriptsize20}
        \put(36.5,-5){\scriptsize40}
        \put(57,-5){\scriptsize60}
        \put(76,-5){\scriptsize80}
        \put(94,-5){\scriptsize100}
        \put(35,-13){\small Dose (Gy)}
        \put(70,90){\small Rectum}
        \put(111,90){\color{prostate}\bf-}
        \put(116,90){\color{prostate}\bf-}
        \put(122,90){\scriptsize Initialization}
        \put(111,80){\color{prostate}\bf-}
        \put(113.5,80){\color{prostate}\bf-}
        \put(116,80){\color{prostate}\bf-}
        \put(122,80){\scriptsize Solution w/ Alg.~\ref{algo:bcd}}
        \put(111,71){\color{prostate}\bf...}
        \put(122,70){\scriptsize Solution w/ Alg.~\ref{algo:reweight} (a)}
        \put(111,61){\color{prostate}\bf.}
        \put(114,59.75){\color{prostate}\bf-}
        \put(117,61){\color{prostate}\bf.}
        \put(122,60){\scriptsize Solution w/ Alg.~\ref{algo:reweight} (b)}
    \end{overpic}
    \vspace{4ex}
    \caption{Dose--volume histograms for Section~\ref{sec:onePoneOoneDV} with and without re-weighting.
    Left: There is little notable difference between the initial and final dose to the PTV.
    Right: Without re-weighting, the constraint that no more than 30\% of the OAR volume receives more than 30 Gy is approximately met, with 34.16\% of the volume exceeding 30 Gy, as opposed to the initialization where 64.14\% does.
    With re-weighting, 29.61\% and 19.84\% the volume exceeds 30 Gy for stopping conditions (a) and (b), respectively.\label{fig:dvh1}}
\end{figure}

In general, as the dose to the rectum decreases, the PTV D95 decreases, while the dose to the tumor deviates more from the target uniform dose.
If the dose--volume constraint makes the problem completely infeasible, the approach still returns some solution, but it need not meet the dose--volume constraint nor the target tumor dose.
In this case, the planner can influence the trade-off between treatment goals by modifying weighting factors, dose--volume parameters, and convergence criteria.

\subsection{One PTV and one OAR with multiple dose--volume constraints}
\label{sec:onePoneOmulDV}

Next we consider the problem of delivering a uniform dose of 81 Gy to the prostate tumor, with multiple dose--volume constraints on the tumor and rectum:
\begin{itemize}
    \item No more than 5\% of the tumor volume receives less than 81 Gy,
    \item no more than 0\% of the tumor volume receives more than 85 Gy,
    \item no more than 20\% of the rectum volume receives more than 60 Gy,
    \item no more than 40\% of the rectum volume receives more than 40 Gy, and
    \item no more than 60\% of the rectum volume receives more than 20 Gy.
\end{itemize}

The first constraint, a lower dose--volume constraint on the PTV, is implemented in the same way as the upper dose--volume constraint on the OAR, with the exception that the corresponding term in the objective function is changed to
\begin{equation}
    \frac{\alpha_i}{2n_i} \| w_i - (d_i^{\text{dv}} - A_ix) \|_2^2,
\end{equation}
and the projection onto $\Omega_i$ is computed by ordering the elements of $w_i$ and setting the highest $\lceil\sfrac{(100-p_i)n_i}{100}\rceil$ entries of $w_i$ to $\max\{0,w_i\}$.
The second constraint corresponds to a maximum-dose constraint, but expressing it as an upper dose--volume constraint allows us to apply our relaxation rather than enforcing a hard constraint.

As in the previous example, the PTV has 6770 voxels, the OAR has 1764 voxels, and there are 986 beamlets.
One downside to adding multiple dose--volume constraints on a particular structure is the inclusion of more terms in the objective function, increasing the problem dimension. 
However, we can reduce the number of objective terms by combining each of the $K_j$ dose--volume constraint terms corresponding to organ $j$,
\begin{equation}
    \frac{\alpha_{j_k}}{2n_j} \| w_{j_k} - (A_j x - d_{j_k}^\text{dv}) \|_2^2 \quad \text{s.t.} \quad \| (w_{j_k})_+ \|_0 \leq \frac{n_jp_{j_k}}{100} \quad \text{for} \quad k = 1,\dots,K_j,
\end{equation}
into one term,
\begin{equation}
    \frac{\alpha_j}{2n_j} \| y_j - A_j x \|_2^2 \quad \text{s.t.} \quad y_j \in \tilde{\Omega}_j,\label{eq:multDVCs}
\end{equation}
where $\tilde{\Omega}_j = \tilde{\Omega}_{j_1} \cap \cdots \cap \tilde{\Omega}_{j_{K_j}}$ represents the set of vectors that satisfy all of the organ's $K_j$ dose--volume constraints, with each constraint defined as
\begin{equation}
    y_j \in \tilde{\Omega}_{j_k} := \left\{ y_j \in \mathbb{R}^{n_j} \,:\, \|(y_j - d_{j_k}^\text{dv})_+\|_0 \leq \frac{n_jp_{j_k}}{100}\right\} \quad\text{for}\quad k = 1,\dots,K_j.
\end{equation}
Projections onto $\tilde{\Omega}_j$ can be computed efficiently by ordering the elements of $y_j$, setting the lowest $(100 - p_{j_k})$\% of $y_j$ to $\min\{d_{j_k},y_j\}$ for upper dose--volume constraints, and setting the highest $(100 - p_{j_k})$\% of $y_j$ to $\max\{d_{j_k},y_j\}$ for lower dose--volume constraints, for all $k = 1,\dots,K_j$.

Using Algorithm~\ref{algo:bcd} with $\alpha_1 = 10$ for the PTV dose--volume constraint term, we converge after 21 iterations.
In Table~\ref{tab:ex2} and Figure~\ref{fig:dvh2}, we see that the dose--volume constraints have been approximately met.
Specifically, none of the PTV volume receives more than 85 Gy, 24.47\% of the PTV volume receives less than 81 Gy (53.26\% initially), and the PTV D95 increases from 79.65 Gy to 80.26 Gy.
Furthermore, 61.95\% of the OAR volume exceeds 20 Gy (68.81\% initially), 42.9\% exceeds 40 Gy (61.29\% initially), and 21.84\% exceeds 60 Gy (38.47\% initially).
Importantly, it is possible to specify the shape of the PTV and OAR dose--profiles in terms of multiple dose--volume constraints without any significant increase in computation.

\begin{figure}
    \centering
    \begin{table}[H]
        \centering
        \footnotesize
        \caption{Results for Section~\ref{sec:onePoneOmulDV}.\label{tab:ex2}}
        \begin{tabular}{ccccccc}
            \hline
            & PTV & PTV & OAR & OAR & OAR \\
            & D95 (Gy) & \% $<$ 81 Gy & \% $>$ 20 Gy & \% $>$ 40 Gy & \% $>$ 60 Gy & Time (s) \\
            \hline
            Initialization & 79.65 & 53.26 & 68.81 & 61.29 & 38.47 & 0.18 \\
            Solution w/ Alg.~\ref{algo:bcd} & 80.26 & 24.27 & 61.95 & 42.90 & 21.84 & 8.89 \\
            Solution w/ Alg.~\ref{algo:reweight} & 81.32 & 4.09 & 59.10 & 39.26 & 19.78 & 55.86 \\
            \hline
        \end{tabular}
    \end{table}

    \hspace{1em}
    \begin{overpic}[scale=0.4]{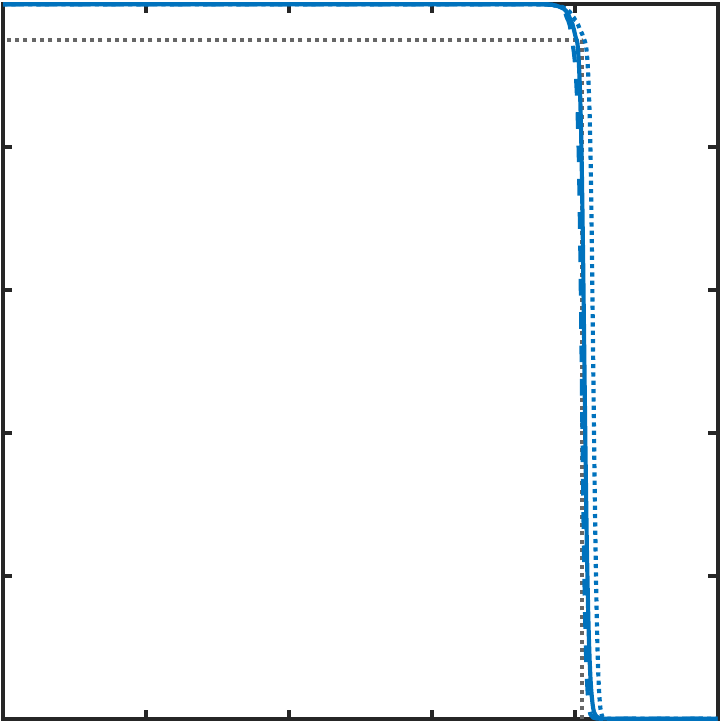}
        \put(-1,-5){\scriptsize0}
        \put(17,-5){\scriptsize20}
        \put(36.5,-5){\scriptsize40}
        \put(57,-5){\scriptsize60}
        \put(76,-5){\scriptsize80}
        \put(94,-5){\scriptsize100}
        \put(-4,-1){\scriptsize0}
        \put(-7,18){\scriptsize20}
        \put(-7,38){\scriptsize40}
        \put(-7,58){\scriptsize60}
        \put(-7,78){\scriptsize80}
        \put(-10,97){\scriptsize100}
        \put(35,-13){\small Dose (Gy)}
        \put(-20,18){\rotatebox{90}{\small Relative Volume (\%)}}
        \put(6,85){\small Prostate}
    \end{overpic} \hspace{1em}
    \begin{overpic}[scale=0.4]{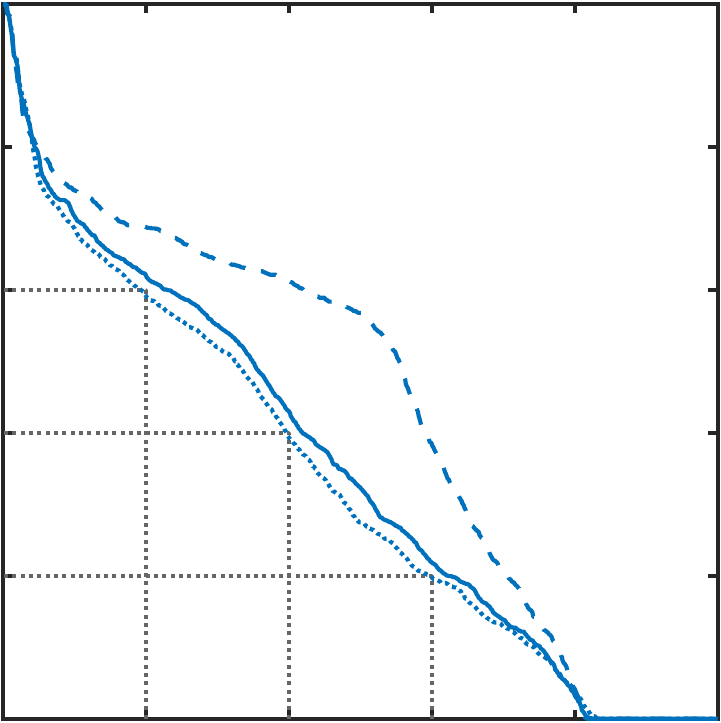}
        \put(-1,-5){\scriptsize0}
        \put(17,-5){\scriptsize20}
        \put(36.5,-5){\scriptsize40}
        \put(57,-5){\scriptsize60}
        \put(76,-5){\scriptsize80}
        \put(94,-5){\scriptsize100}
        \put(35,-13){\small Dose (Gy)}
        \put(70,90){\small Rectum}
        \put(111,90){\color{prostate}\bf-}
        \put(116,90){\color{prostate}\bf-}
        \put(122,90){\scriptsize Initialization}
        \put(111,80){\color{prostate}\bf-}
        \put(113.5,80){\color{prostate}\bf-}
        \put(116,80){\color{prostate}\bf-}
        \put(122,80){\scriptsize Solution w/ Alg.~\ref{algo:bcd}}
        \put(111,71){\color{prostate}\bf...}
        \put(122,70){\scriptsize Solution w/ Alg.~\ref{algo:reweight}}
    \end{overpic}
    \vspace{4ex}
    \caption{Dose--volume histograms for Section~\ref{sec:onePoneOmulDV}.
    Left: Without re-weighting, the dose--volume constraints on the PTV are approximately met, with 24.27\% of the volume receiving less than 81 Gy (53.26\% initially) and none of the volume exceeding 85 Gy.
    With re-weighting, only 4.09\% of the volume receives less than 81 Gy.
    Right: Without re-weighting, the dose--volume constraints on the OAR are approximately met, with 61.95\% of the volume exceeding 20 Gy (68.61\% initially), 42.9\% exceeding 40 Gy (61.29\% initially), and 21.84\% exceeding 60 Gy (38.47\% initially).
    With re-weighting, only 59.1\% of the volume exceeds 20 Gy, 39.25\% exceeds 40 Gy, and 19.78\% exceeds 60 Gy.\label{fig:dvh2}}
\end{figure}

Next, we use Algorithm~\ref{algo:reweight} with weight factors and dose--volume parameters initialized with the values used in Algorithm~\ref{algo:bcd}, initial stopping tolerance $\epsilon^{(0)} = 10^{-3}$, weight update parameter $\sigma = 0.01$, and stopping tolerance update parameter $\gamma = 0.99$.
For the lower dose--volume constraint on the PTV, the dose parameter increases each iteration,
\begin{equation}
    d_i^{\text{dv}(k+1)} = (1 + \sigma)d_i^{\text{dv}(k)},
\end{equation}
rather than decreasing like the upper dose--volume constraints.
Stopping once all dose--volume constraints are met, none of the PTV volume receives more than 85 Gy, 4.09\% of the PTV volume receives less than 81 Gy, and the PTV D95 is 81.32 Gy.
In addition, only 59.1\% of the OAR volume exceeds 20 Gy, 39.26\% exceeds 40 Gy, and 19.78\% exceeds 60 Gy.
In cases of infeasibility due to competing PTV and OAR dose--volume constraints, additional convergence criteria for Algorithm~\ref{algo:reweight} can be specified, e.g., setting a maximum number of iterations.

\subsection{Multiple PTVs and OARs}
\label{sec:mulPTVmulOAR}

Now we consider the problem of delivering a uniform dose of 81 Gy to the prostate tumor and a uniform dose of 60 Gy to the lymph nodes, while satisfying the dose--volume constraints that no more than 50\% of the rectum receives more than 50 Gy and no more than 30\% of the bladder receives more than 30 Gy.
In this case the first PTV has 6770 voxels, the second PTV has 8483 voxels, the first OAR has 1643 voxels, the second OAR has 9708 voxels, and there are 986 beamlets.
We converge after 48 iterations of Algorithm~\ref{algo:bcd}.

Due to the proximity of the two PTVs and the difference between their target doses, it is much more difficult to deliver a uniform dose to both targets.
The increase in geometric constraints and competing objectives results in more complicated dose distributions and beamlet intensity patterns, illustrated in Figure~\ref{fig:dose4}.
More of the prostate volume receives less than 81 Gy than in previous examples, while more of the lymph node volume exceeds 60 Gy than we would like (Figure~\ref{fig:dvh3}, top row).
In this case, it may be important to add lower and upper dose--volume constraints on the PTVs to ensure that appropriate doses are delivered.
The dose--volume constraints on the rectum and bladder are both approximately met, with 57.33\% of the rectum exceeding 50 Gy (82.47\% initially) and 38.14\% of the bladder exceeding 30 Gy (91.87\% initially) (Figure~\ref{fig:dvh3}, bottom row).

\begin{figure}
    \centering
    \begin{minipage}[T]{0.45\textwidth} 
        \begin{overpic}[scale=0.45]{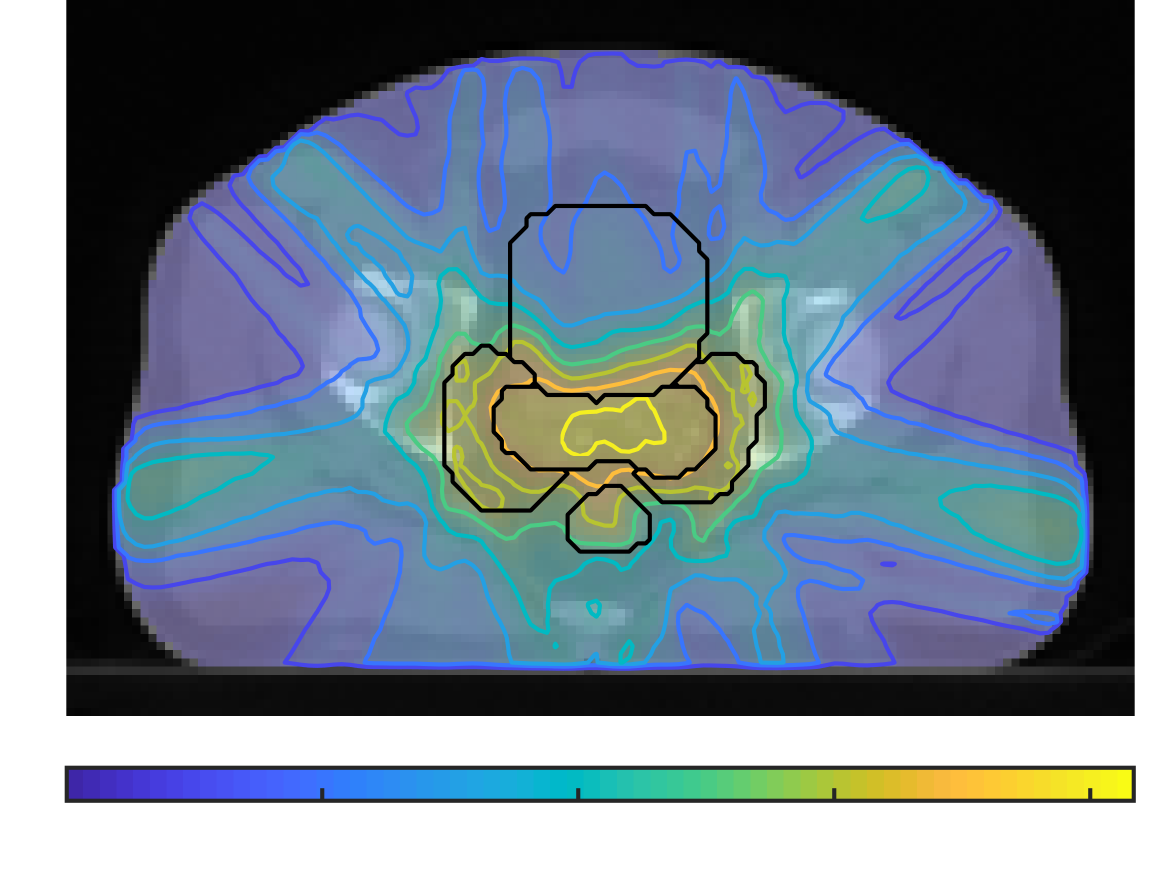}
        	    \put(5,3){\scriptsize0}
        	    \put(26,3){\scriptsize20}
        	    \put(47.5,3){\scriptsize40}
        	    \put(69.5,3){\scriptsize60}
        	    \put(91.5,3){\scriptsize80}
            \put(43,-1){\scriptsize Dose (Gy)}
        	    \put(44.5,37){\small Prostate}
        	    \put(45,24){\small Rectum}
        	    \put(66.5,39){\small Lymph}
        	    \put(66.5,35){\small Nodes}
        	    \put(45,51){\small Bladder}
        	    \put(51,71.5){\small\color{white}$0^\circ$}
        	    \put(79,63){\small\color{white}$52^\circ$}
        	    \put(89,27){\small\color{white}$104^\circ$}
            \put(61,14.5){\small\color{white}$156^\circ$}
    	\end{overpic}
    \end{minipage} \hspace{4em}
    \begin{minipage}[T]{0.4\textwidth}
    	\begin{overpic}[scale=0.425]{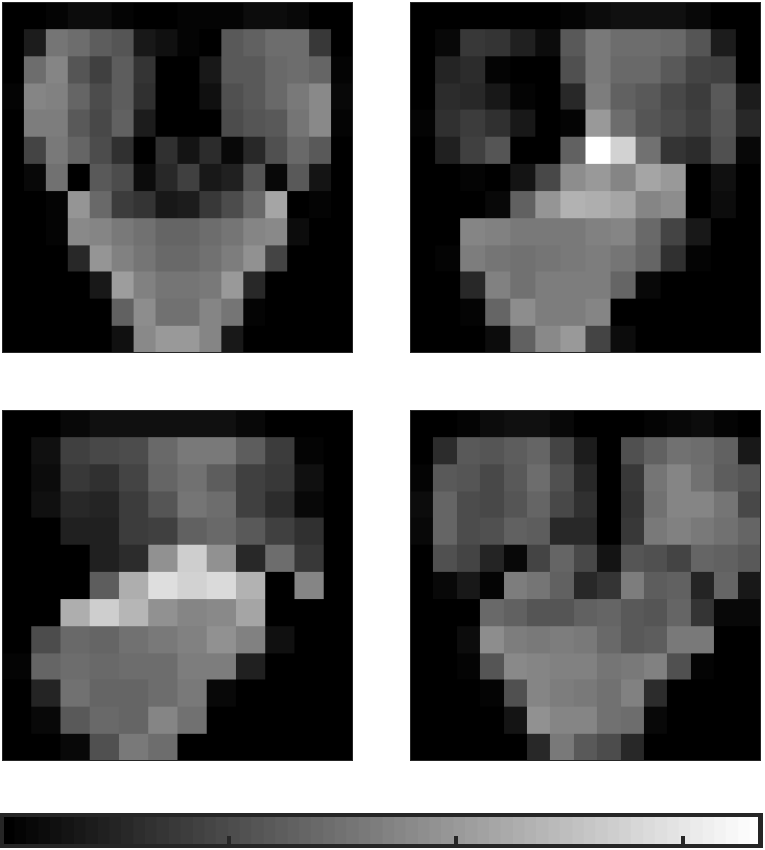}
        	    \put(2,94){\small\color{white}$0^\circ$}
        	    \put(50,94){\small\color{white}$52^\circ$}
        	    \put(2,45.5){\small\color{white}$104^\circ$}
        	    \put(50,45.5){\small\color{white}$156^\circ$}
        	    \put(-0.5,-5){\scriptsize0}
        	    \put(22,-5){\scriptsize1000}
        	    \put(49,-5){\scriptsize2000}
        	    \put(75.5,-5){\scriptsize3000}
            \put(20,-11){\scriptsize Beamlet Intensity (MU)}
    	\end{overpic}
        \vspace{3.2ex}
    \end{minipage}
    \vspace{2ex}
    \caption{Solution for example in Section~\ref{sec:mulPTVmulOAR}.
    Right: Targets in the prostate and lymph nodes are irradiated by seven equally spaced beams with dose--volume constraints on the rectum and bladder.
    Left: Beamlet intensities for four of the seven beams, calculated to deliver a nearly uniform dose of 81 Gy to the prostate and 60 Gy to the lymph nodes while ensuring that no more than 50\% of the rectum volume receives more than 50 Gy and no more than 30\% of the bladder volume receives more than 30 Gy.
    Due to symmetry, the remaining three beams have similar intensity patterns.\label{fig:dose4}}
\end{figure}

Using Algorithm~\ref{algo:reweight} with weight factors and dose--volume parameters initialized with the values used in Algorithm~\ref{algo:bcd}, initial stopping tolerance $\epsilon^{(0)} = 10^{-3}$, weight update parameter $\sigma = 0.01$, and stopping tolerance update parameter $\gamma = 0.99$, we are able to meet both of the dose--volume constraints on the OARs.
Specifically, only 47.53\% of the rectum volume receives more than 50 Gy and 29.93\% of the bladder volume exceeds 30 Gy, with little difference in the PTV doses.

\begin{figure}
    \centering
    \begin{table}[H]
        \centering
        \footnotesize
        \caption{Results for Section~\ref{sec:mulPTVmulOAR}.\label{tab:ex3}}
        \begin{tabular}{cccccc}
            \hline
            & PTV 1 & PTV 2 & OAR 1 & OAR 2 \\
            & D95 (Gy) & D95 (Gy) & \% $>$ 50 Gy & \% $>$ 30 Gy & Time (s) \\
            \hline
            Initialization & 76.96 & 58.53 & 82.47 & 91.87 & 0.37 \\
            Solution w/ Alg.~\ref{algo:bcd} & 76.70 & 57.51 & 57.33 & 38.14 & 15.52 \\
            Solution w/ Alg.~\ref{algo:reweight} & 76.48 & 57.31 & 47.53 & 29.93 & 84.83 \\
            \hline
        \end{tabular}
    \end{table}

    \begin{overpic}[scale=0.4]{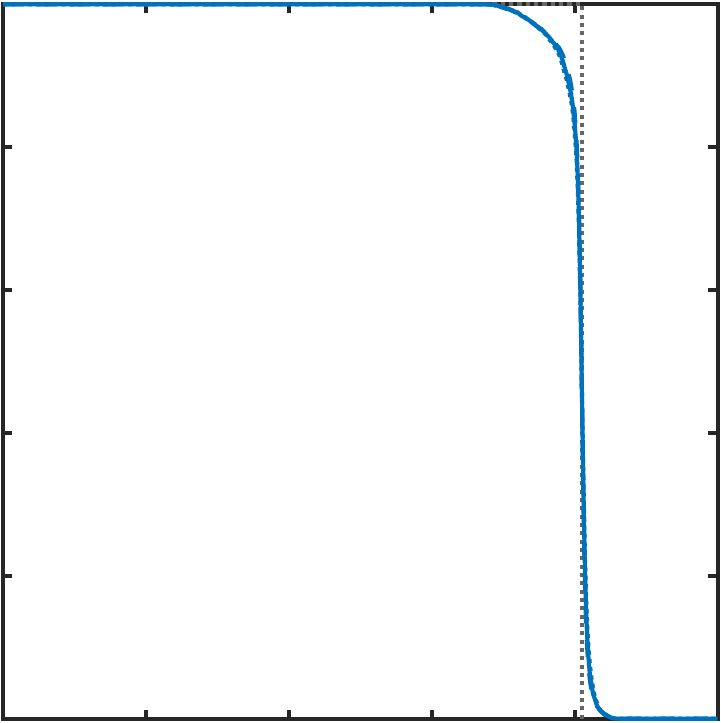}
        \put(-4,-1){\scriptsize0}
        \put(-7,18){\scriptsize20}
        \put(-7,38){\scriptsize40}
        \put(-7,58){\scriptsize60}
        \put(-7,78){\scriptsize80}
        \put(-10,97){\scriptsize100}
        \put(-20,18){\rotatebox{90}{\small Relative Volume (\%)}}
        \put(6,90){\small Prostate}
    \end{overpic} \hspace{0.5em}
    \begin{overpic}[scale=0.4]{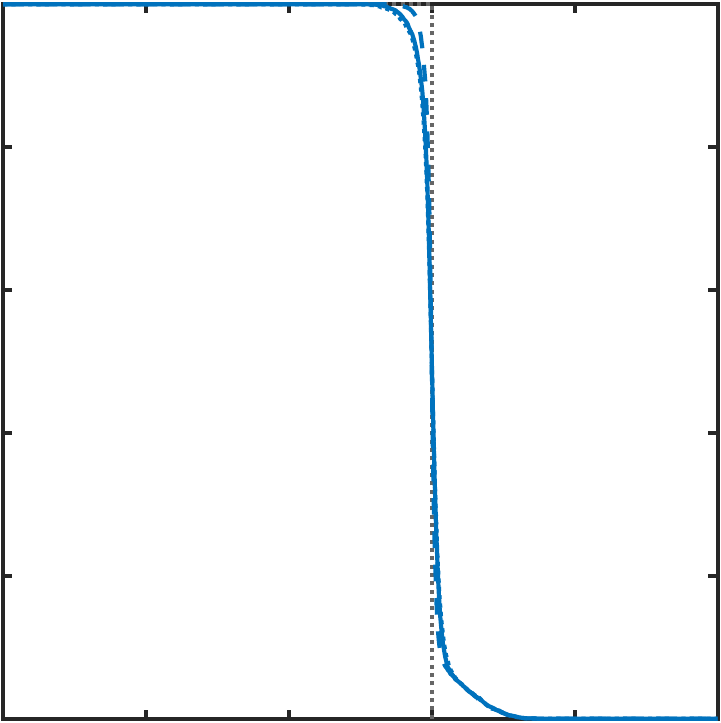}
        \put(6,90){\small Lymph Nodes}
        \put(111,90){\color{prostate}\bf-}
        \put(116,90){\color{prostate}\bf-}
        \put(122,90){\scriptsize Initialization}
        \put(111,80){\color{prostate}\bf-}
        \put(113.5,80){\color{prostate}\bf-}
        \put(116,80){\color{prostate}\bf-}
        \put(122,80){\scriptsize Solution w/ Alg.~\ref{algo:bcd}}
        \put(111,71){\color{prostate}\bf...}
        \put(122,70){\scriptsize Solution w/ Alg.~\ref{algo:reweight}}
    \end{overpic} \\ \vspace*{2.5ex} \hspace{0.1em}
    \begin{overpic}[scale=0.4]{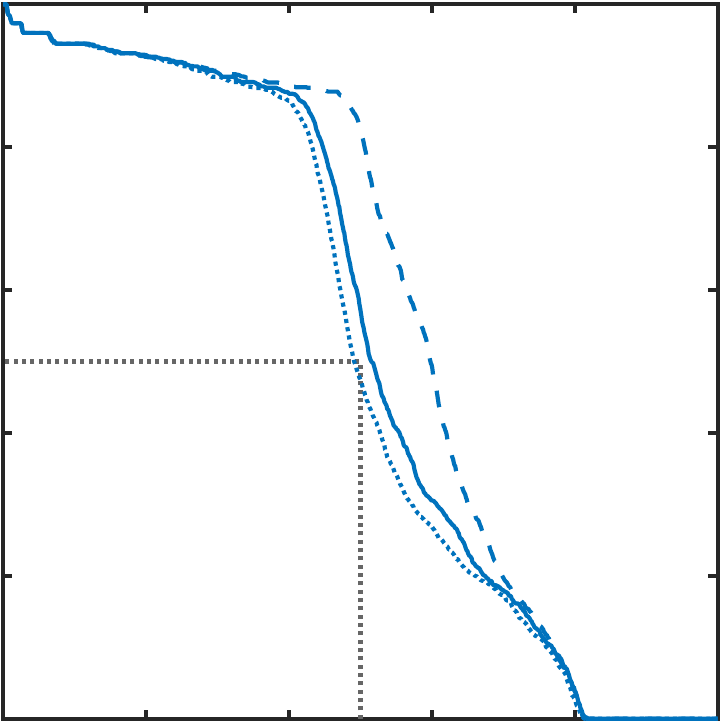}
        \put(-1,-5){\scriptsize0}
        \put(17,-5){\scriptsize20}
        \put(36.5,-5){\scriptsize40}
        \put(57,-5){\scriptsize60}
        \put(76,-5){\scriptsize80}
        \put(94,-5){\scriptsize100}
        \put(-4,-1){\scriptsize0}
        \put(-7,18){\scriptsize20}
        \put(-7,38){\scriptsize40}
        \put(-7,58){\scriptsize60}
        \put(-7,78){\scriptsize80}
        \put(-10,97){\scriptsize100}
        \put(35,-13){\small Dose (Gy)}
        \put(-20,18){\rotatebox{90}{\small Relative Volume (\%)}}
        \put(70,90){\small Rectum}
    \end{overpic} \hspace{0.5em}
    \begin{overpic}[scale=0.4]{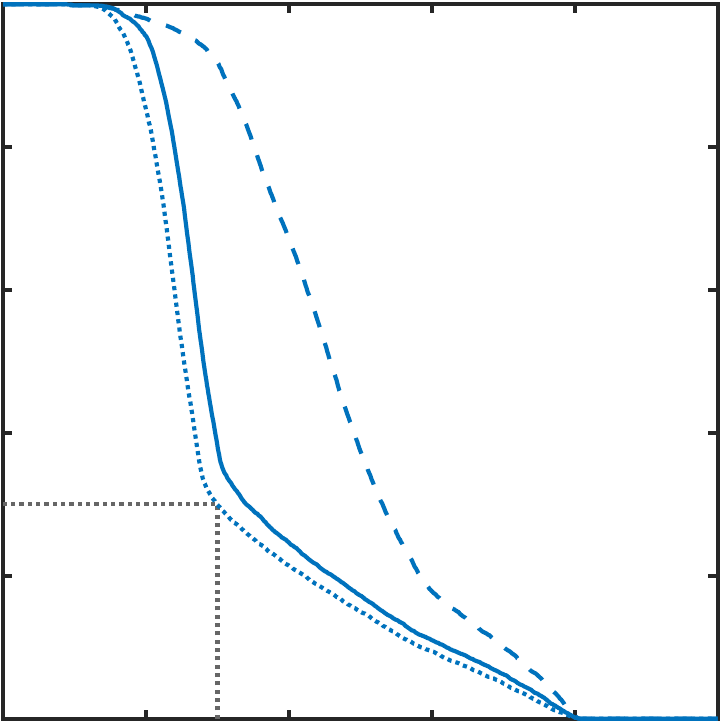}
        \put(-1,-5){\scriptsize0}
        \put(17,-5){\scriptsize20}
        \put(36.5,-5){\scriptsize40}
        \put(57,-5){\scriptsize60}
        \put(76,-5){\scriptsize80}
        \put(94,-5){\scriptsize100}
        \put(35,-13){\small Dose (Gy)}
        \put(70,90){\small Bladder}
    \end{overpic}
    \vspace{4ex}
    \caption{Dose--volume histograms for Section~\ref{sec:mulPTVmulOAR}.
    Top row: There is little noticeable difference between the dose to the PTVs in the prostate and lymph nodes, though their proximity causes less prostate voxels to receive their target dose and more lymph node voxels to exceed their target dose than we would like.
    Bottom row: Without re-weighting, the dose--volume constraints to the OARs are approximately met, with 57.33\% of the rectum exceeding 50 Gy (82.47\% initially) and 38.14\% of the bladder exceeding 30 Gy (91.87\% initially).
    With re-weighting, only 47.53\% of the rectum volume exceeds 50 Gy and 29.93\% of the bladder volume exceeds 30 Gy.\label{fig:dvh3}}
\end{figure}

\subsection{Comparisons with other methods}
\label{sec:compare}

Finally, we compare results from Algorithm~\ref{algo:bcd}, Algorithm~\ref{algo:reweight}, and four other methods for solving the FMO problem with dose--volume constraints.
We start with two methods that, like our approach, approximate dose--volume constraints with objective terms, and thus provide flexibility for cases of infeasiblity.
In particular, as mentioned in Section~\ref{sec:dvc}, many least-squares models include a penalty term for OAR voxels that exceed a specified dose (see e.g., \cite{ehrgott2010mathematical,bortfeld1990methods, wu2000algorithms,spirou1998gradient,llacer2001comparative, llacer2003absence,zhang2006fluence,zhang2008dose}).
This penalty may apply to the entire OAR, or in the case of dose--volume objectives, to a fraction of the OAR volume.
Following the approach used in \cite{spirou1998gradient, llacer2001comparative, llacer2003absence}, we replace our dose--volume terms in \eqref{eq:fObj} with
\begin{equation} \label{eq:iter}
    \sum_{j \in \mathcal{J}^{\text{dv}}}\frac{\alpha_j}{2n_j} \| (A_jx - \tilde{d}_j^{\text{dv}})_+ \|_2^2,
\end{equation}
where the vector $\tilde{d}_j^{\text{dv}}$ changes each iteration.
Specifically, after each iteration we sort the voxel dose vector $A_jx$, assigning the highest $p_j$\% to the dose value $\tilde{d}_j^{\text{dv}} = 10^{6}$ and the remaining $(100-p_j)$\% to $\tilde{d}_j^{\text{dv}} = d_j^{\text{dv}}$.
The beamlet vector $x$ is updated each iteration with a step of gradient descent, where only the the lowest $(100-p_j)$\% of voxels that exceed $\tilde{d}_j^\text{dv}$ influence the direction of the gradient, followed by a projection onto the nonnegative orthant.
If the vector $\tilde{d}_j^\text{dv}$ converges, then the problem becomes convex \cite{llacer2003absence}.

Alternatively, we can penalize voxels that exceed a given dose by adding nonnegative slack variables to the OAR terms.
For example, following the model used in the Sensitivity-Driven Greedy Algorithm \cite{zhang2006fluence, zhang2008dose}, we replace our dose--volume terms in \eqref{eq:fObj} with
\begin{equation} \label{eq:slack}
    \sum_{j \in \mathcal{J}^{\text{dv}}}\frac{\alpha_j}{2n_j} \| A_jx + s_j - y_j \|_2^2 \quad \text{s.t.} \quad s_j \geq 0, \quad y_j \in \tilde{\Omega}_j,
\end{equation}
which is similar to our modified OAR terms in \eqref{eq:multDVCs}.
The alternating projections approach of the Sensitivity-Driven Greedy Algorithm is equivalent to the block coordinate descent approach of Algorithm~\ref{algo:bcd}, with two important differences.
First, the nonnegative least-squares step to compute $x^{(k+1)}$ also includes the slack variables $s_j^{(k+1)}$, so the number of variables in this subproblem increases by $\sum_{j \in \mathcal{J}^\text{dv}}n_j$.
Second, to ensure the convergence of the dose variables, the $y^{(k+1)}$ update includes a projection onto the modified set \mbox{$\tilde{\Omega}_j\cap\{ y_j \in \mathbb{R}^{n_j} \,:\, y_j \geq y_j^{(k)}\}$}.
Using the initialization $y_j^{(0)} = d_j^\text{dv}$, this approach can be interpreted as iteratively selecting the voxels that may exceed $d_j^\text{dv}$, and once $\lfloor \sfrac{n_j p_j}{100} \rfloor$ elements of vector $y_j^{(k)}$ surpass $d_j^\text{dv}$, the problem becomes convex.

Next we look at two methods that solve a convex problem by imposing maximum-dose constraints on a chosen subvolume of each OAR. 
As mentioned in Section~\ref{sec:dvc}, many methods take a multi-stage approach of 1) solving an approximation of the original problem, 2) determining which voxels may exceed $d_j^\text{dv}$ based on the resulting solution, and 3) polishing the solution from Step 1 by solving a convex problem using the subvolumes determined in Step 2.
For example, one heuristic to choose the $p_j\%$ of the voxels that may receive more than $d_j^\text{dv}$ Gy is to solve the FMO problem without dose--volume constraints \cite{saberian2016optimal, saberian2016theoretical, hou2003optimization}.
Following this approach, we first solve $(\mathcal{P}_1)$ without any dose--volume constraints (which is equivalent to our initialization in \eqref{eq:init}), and then re-solve with maximum-dose constraints applied to the $(100 - p_j)$\% of the voxels that received the lowest dose in the previous solution.

Another heuristic is to solve an approximation of $(\mathcal{P}_1)$ where the dose--volume constraints in \eqref{eq:p1dvc} are replaced by the convex constraint
\begin{equation} \label{eq:convex}
    \|(\nu_j + A_jx - d_j^{\text{dv}})_+\|_1 \leq \frac{\nu_jn_jp_j}{100} \quad \text{s.t.} \quad \nu_j \geq 0,
\end{equation}
where $\nu_j$ is optimized along with the beamlet vector $x$ \cite{fu2019convex}.
If this problem is feasible, the solution is guaranteed to satisfy the dose--volume constraints, but it may not be optimal for $(\mathcal{P}_1)$.
Therefore, the problem is polished by re-solving with maximum-dose constraints applied to the $(100-p_j)$\% of the voxels that received the lowest dose in the approximate solution.
We compare results from these two convex methods with solutions from our re-weighting scheme in Algorithm~\ref{algo:reweight} and solutions from Algorithm~\ref{algo:bcd} which are then polished to satisfy the dose--volume constraints.

\subsubsection{One PTV and one OAR.}
\label{sec:compareOne}

First we consider the problem from Section~\ref{sec:onePoneOoneDV} with stopping tolerance $\epsilon = 10^{-2}$, where an additional lower dose--volume constraint that no more than 5\% of the tumor volume receives less than 81 Gy has been applied to the PTV.
In Table~\ref{tab:ex4a} and Figure~\ref{fig:dvh4a} (found in Appendix~\ref{sec:compareFigs}), we see the results of using approximate dose--volume objectives.
Because none of the solutions come close to satisfying the lower dose--volume constraint on the PTV, and only one satisfies the upper dose--volume constraint on the OAR, we look for the best candidate method for re-weighting or polishing.
Compared to the solution with \eqref{eq:iter}, our solution achieves a higher PTV D95 and a lower OAR dose in less time.
Because the method with \eqref{eq:iter} does not solve a subproblem to update the beamlet vector, it unsurprisingly requires less time per iteration than the other two methods, though it does require more iterations overall.
The method with \eqref{eq:slack} requires significantly more time than the other two approaches due to the additional slack variables, but it produces the only solution that meets the dose--volume constraint on the OAR.
However, its lower OAR dose comes at the expense of a lower PTV D95, and in less time we can use re-weighting or polishing with $(\mathcal{P}_2)$ to meet all of the dose--volume constraints (first two rows of Table~\ref{tab:ex4b}).

\begin{table}
    \centering
    \footnotesize
    \caption{Results for Section~\ref{sec:compareOne} with dose--volume objectives.\label{tab:ex4a}}
    \begin{tabular}{lccccc}
        \hline
        & PTV & PTV & OAR \\
        & D95 (Gy) & \% $<$ 81 Gy & \% $>$ 30 Gy & Iter. ($k$) & Time (s) \\
        \hline
        Solution w/ $(\mathcal{P}_2)$ & 79.65 & 55.32 & 38.35 & 5 & 1.83\\
        Solution w/ \eqref{eq:iter} & 78.74 & 44.33 & 56.55 & 173 & 5.20 \\
        Solution w/ \eqref{eq:slack} & 77.89 & 43.74 & 20.63 & 5 & 198.70 \\
        \hline
    \end{tabular}
\end{table}

Next we examine results computed through either re-weighting or polishing.
All four solutions in Table~\ref{tab:ex4b} and Figure~\ref{fig:dvh4b} satisfy the dose--volume constraints on the PTV and OAR, and are thus feasible solutions of $(\mathcal{P}_1)$.
However, the lowest objective value for \eqref{eq:justDvcEq} is achieved by solving $(\mathcal{P}_2)$ with Algorithm~\ref{algo:bcd}, then re-solving with maximum-dose constraints applied to subvolumes of the PTV and OAR.
The solution with \eqref{eq:init} has the highest objective value, perhaps because it is the only method that does not include any approximations of the dose--volume constraints in Step 1, while the solution with \eqref{eq:convex} requires the most time. 
The solution computed with Algorithm~\ref{algo:reweight}, on the other hand, requires the least amount of time, though it is not as good with respect to the objective value as the solution computed with Algorithm~\ref{algo:bcd} and polishing.

\begin{table}
    \centering
    \footnotesize
    \caption{Results for Section~\ref{sec:compareOne} with dose--volume constraints.\label{tab:ex4b}}
    \begin{tabular}{lccccc}
        \hline
        & PTV & PTV & OAR \\
        & D95 (Gy) & \% $<$ 81 Gy & \% $>$ 30 Gy & Obj. & Time (s) \\
        \hline
        Solution w/ $(\mathcal{P}_2)^a$ & 81.32 & 2.97 & 29.43 & 7.80 & 46.94 \\
        Solution w/ $(\mathcal{P}_2)^b$ & 81.00 & 4.37 & 29.43 & 6.95 & 65.56 \\
        Solution w/ \eqref{eq:init} & 81.00 & 3.26 & 28.40 & 9.12 & 63.60 \\
        Solution w/ \eqref{eq:convex} & 81.00 & 2.20 & 28.16 & 8.94 & 379.46 \\
        \hline
    \end{tabular}
    \begin{flushleft}\vspace{2.5ex}\hspace{9.5em}{\footnotesize $^a$Re-weighted \\ \hspace{9.5em} $^b$Polished}\end{flushleft}
\end{table}

\subsubsection{Two PTVs and two OARs.}
\label{sec:compareMultiple}

Next we consider the problem from Section~\ref{sec:mulPTVmulOAR} with stopping tolerance $\epsilon = 10^{-2}$, with additional lower dose--volume constraints that no more than 5\% of the tumor volumes receives less than 81 Gy and 60 Gy for the prostate and lymph nodes, respectively.
In Table~\ref{tab:ex5a} and Figure~\ref{fig:dvh5a}, we see the results of using approximate dose--volume objectives, with patterns similar to those in Table~\ref{tab:ex4a}.
Specifically, none of the solutions come close to meeting the lower dose--volume constraints on the PTVs, our solution from Algorithm~\ref{algo:bcd} achieves the highest PTV D95 values and requires the least amount of time, and the solution with \eqref{eq:slack} is the only method to satisfy the upper dose--volume constraints on the OARs, at the expense of significantly more time.
While the method with \eqref{eq:iter} produces a lower dose to the bladder, our method is still the best candidate for re-weighting or polishing, where it achieves solutions that meet all dose--volume constraints in less time than it takes the method with \eqref{eq:slack} to compute its approximate solution (first two rows of Table~\ref{tab:ex5b}).

\begin{table}
    \centering
    \footnotesize
    \caption{Results for Section~\ref{sec:compareMultiple} with dose--volume objectives.\label{tab:ex5a}}
    \begin{tabular}{lcccccccc}
        \hline
        & PTV 1 & PTV 1 & PTV 2 & PTV 2 & OAR 1 & OAR 2 \\
        & D95 (Gy) & \% $<$ 81 Gy & D95 (Gy) & \% $<$ 60 Gy & \% $>$ 50 Gy & \% $>$ 30 Gy & Iter. ($k$) & Time (s) \\
        \hline
        Solution w/ $(\mathcal{P}_2)$ & 76.95 & 57.86 & 57.51 & 52.95 & 64.58 & 49.47 & 3 & 2.80 \\
        Solution w/ \eqref{eq:iter} & 76.57 & 42.07 & 56.87 & 41.18 & 77.48 & 44.45 & 195 & 12.98 \\
        Solution w/ \eqref{eq:slack} & 74.18 & 45.73 & 57.34 & 42.25 & 25.68 & 27.55 & 3 & 2014.94 \\
        \hline
    \end{tabular}
\end{table}

We see the results for this example computed through either re-weighting or polishing in Table~\ref{tab:ex5b} and Figure~\ref{fig:dvh5b}.
This time, the PTV and OAR subvolumes determined through the solution computed with \eqref{eq:init} does not lead to a feasible solution for $(\mathcal{P}_1)$, though the subvolumes chosen by the other three methods do.
Additionally, the method with \eqref{eq:convex} delivers the lowest dose to the OARs, at the expense of a higher objective value and computation time, while the solution found using $(\mathcal{P}_2)$ and re-weighting produces the lowest objective value in the least amount of time.

\begin{table}
    \centering
    \footnotesize
    \caption{Results for Section~\ref{sec:compareMultiple} with dose--volume constraints.\label{tab:ex5b}}
    \begin{tabular}{lcccccccc}
        \hline
        & PTV 1 & PTV 1 & PTV 2 & PTV 2 & OAR 1 & OAR 2 \\
        & D95 (Gy) & \% $<$ 81 Gy & D95 (Gy) & \% $<$ 60 Gy & \% $>$ 50 Gy & \% $>$ 30 Gy & Obj. & Time (s) \\
        \hline
        Solution w/ $(\mathcal{P}_2)^a$ & 81.17 & 4.71 & 60.32 & 3.90 & 48.39 & 29.84 & 27.71 & 138.36 \\
        Solution w/ $(\mathcal{P}_2)^b$ & 81.00 & 4.46 & 60.01 & 3.68 & 44.31 & 28.78 & 28.64 & 309.83 \\
        Solution w/ \eqref{eq:convex} & 81.00 & 3.07 & 60.01 & 2.69 & 33.41 & 25.02 & 36.02 & 1118.13 \\
        \hline
    \end{tabular}
    \begin{flushleft}\vspace{2.5ex}\hspace{0.5em}{\footnotesize $^a$Re-weighted \\ \hspace{0.5em} $^b$Polished}\end{flushleft}
\end{table}

\subsubsection{Two PTVs and four OARs.}
\label{sec:compareMultiple2}

For our last example, we use the same PTVs, OARs, and prescriptions as the previous section, with the exception of the bladder, where the upper dose--volume constraint is changed so that at most 40\% of the organ volume may receive more than 40 Gy.
Moreover, we add the left and right femoral heads.
For illustration purposes, these new OARs have maximum-dose constraints of 30 Gy, expressed as upper dose--volume constraints that no more than 0\% of the OAR volumes receive more than 30 Gy.
The left and right femoral heads have 5957 and 5974 voxels, respectively.

Tables~\ref{tab:ex6a1} and \ref{tab:ex6a2} and Figure~\ref{fig:dvh6a} contain the results computed using approximate dose--volume objectives.
While the solution from \eqref{eq:slack} achieves the lowest OAR doses, meeting the dose--volume constraints for both the rectum and bladder, the method scales poorly.
In particular, the addition of the femoral heads leads to an unreasonably long computation time.
The other two methods produce solutions with similar PTV D95 doses, though neither do well at approximating the lower dose--volume constraints on the PTVs.
On the other hand, the solution from Algorithm~\ref{algo:bcd} requires less time and delivers a lower dose to the OARs than the method with \eqref{eq:iter}.

\begin{table}
    \centering
    \footnotesize
    \caption{PTV results for Section~\ref{sec:compareMultiple2} with dose--volume objectives.\label{tab:ex6a1}}
    \begin{tabular}{lccccc}
        \hline
        & PTV 1 & PTV 1 & PTV 2 & PTV 2 \\
        & D95 (Gy) & \% $<$ 81 Gy & D95 (Gy) & \% $<$ 60 Gy & Iter. ($k$) \\
        \hline
        Solution w/ $(\mathcal{P}_2)$ & 76.83 & 58.18 & 58.25 & 53.77 & 2 \\
        Solution w/ \eqref{eq:iter} & 76.86 & 43.68 & 58.13 & 46.09 & 97 \\
        Solution w/ \eqref{eq:slack} & 74.17 & 45.21 & 57.78 & 42.80 & 2 \\
        \hline
        \end{tabular}
\end{table}

\begin{table}
    \centering
    \footnotesize
    \caption{OAR results for Section~\ref{sec:compareMultiple2} with dose--volume objectives.\label{tab:ex6a2}}
    \begin{tabular}{lccccc}
        \hline
        & OAR 1 & OAR 2 & OAR 3 & OAR 4 \\
        & \% $>$ 50 Gy & \% $>$ 40 Gy & \% $>$ 30 Gy & \% $>$ 30 Gy & Time (s) \\
        \hline
        Solution w/ $(\mathcal{P}_2)$ & 65.61 & 52.89 & 2.37 & 2.38 & 2.16 \\
        Solution w/ \eqref{eq:iter} & 73.89 & 53.02 & 3.88 & 3.63 & 7.38 \\
        Solution w/ \eqref{eq:slack} & 21.06 & 19.37 & 1.65 & 1.79 & 29246.23 \\
        \hline
    \end{tabular}
\end{table}

Finally, Table~\ref{tab:ex6b} and Figure~\ref{fig:dvh6b} include the results computed using either re-weighting or polishing.
In this case, the solver for the convex approximation of the dose--volume constraints in \eqref{eq:convex} fails to find a feasible solution.
The other three methods lead to beamlets that satisfy all of the dose--volume constraints.
The subvolumes chosen by the initialization in \eqref{eq:init} produces the solution with the lowest OAR doses, at the expense of a higher objective value and computation time, while the solutions using $(\mathcal{P}_2)$ with re-weighting and polishing require the least amount of time and achieve the lowest objective value, respectively.
Overall, these results demonstrate that, compared to other convex and nonconvex methods, our approach can efficiently deliver similar or better solutions to the FMO problem in the presence of competing PTV and OAR dose--volume constraints.

\begin{table}
    \centering
    \footnotesize
    \caption{Results for Section~\ref{sec:compareMultiple2} with dose--volume constraints.\label{tab:ex6b}}
    \begin{tabular}{lcccccccc}
        \hline
        & PTV 1 & PTV 1 & PTV 2 & PTV 2 & OAR 1 & OAR 2 \\
        & D95 (Gy) & \% $<$ 81 Gy & D95 (Gy) & \% $<$ 60 Gy & \% $>$ 50 Gy & \% $>$ 40 Gy & Obj. & Time (s) \\
        \hline
        Solution w/ $(\mathcal{P}_2)^a$ & 81.05 & 4.93 & 60.18 & 3.97 & 46.68 & 38.12 & 23.67 & 215.38 \\
        Solution w/ $(\mathcal{P}_2)^b$ & 81.00 & 4.89 & 60.01 & 4.10 & 48.27 & 37.58 & 15.25 & 417.34\\
        Solution w/ \eqref{eq:init} & 81.01 & 4.17 & 60.26 & 1.19 & 37.49 & 32.75 & 52.90 & 474.25 \\
        \hline
        \end{tabular}
    \begin{flushleft}\vspace{2.5ex}\hspace{0.5em}{\footnotesize $^a$Re-weighted \\ \hspace{0.5em} $^b$Polished}\end{flushleft}
\end{table}

\section{Conclusions and Future Research}
\label{sec:conclusion} 

The main challenge of radiation treatment planning is to deliver a prescribed dose to the tumor while sparing surrounding healthy tissues.
The fluence map optimization problem for intensity-modulated radiation therapy can be formulated as a large-scale inverse problem with competing objectives and constraints on the planning target volumes and organs-at-risk.
Unfortunately, the clinically relevant dose--volume constraints are nonconvex, so solving treatment planning problems exactly with dose--volume constraints is NP-hard.
We proposed a new approach that is well adapted to handle nonconvex dose--volume constraints without relying on convex relaxations.
To solve this formulation, we developed a customized algorithm that is guaranteed to converge to a stationary point of the proposed model.
The overall approach is scalable, handles multiple objectives on PTVs and OARs, and returns actionable treatment plans as demonstrated with examples on the CORT dataset. 

While this paper focuses primarily on the methodology and feasibility of our new approach, future work on this project will be directed towards practical considerations.
We will test our model on additional datasets including different tumor types, patient geometries, and prescriptions.
The comparison of the results using our approach and commercial treatment planning systems is left for future work.

\section*{Acknowledgements}
This work is in part supported by NSF Grant DGE-1258485 AM002 and the Washington Research Foundation Data Science Professorship.
The first author acknowledges the generous support of the University of Washington Applied Mathematics Boeing Fellowship.

\bibliographystyle{unsrt}
\bibliography{references}

\appendix
\begin{section}{Availability of Materials} 
The figures in this article, as well as the plotting scripts necessary to reproduce them, are available openly
under the CC-BY license \cite{maass2021figures}. All examples are performed on the CORT prostate dataset \cite{craft2014shared, craft2014shared2}. Solutions to nonnegative least-squares problems are solved with the function {\tt minConf\_TMP} from the minConf package \cite{schmidt2008minconf, schmidt2009optimizing}, and solutions for \eqref{eq:convex} in Section~\ref{sec:compare} were solved with CVX, a package for specifying and solving convex programs \cite{cvx, gb08}.
\end{section}

\begin{section}{Proofs for Section~\ref{sec:approach}}
\label{sec:proofs}

In this section, we provide proofs for Theorems~\ref{thm:g_converge}~and~\ref{thm:w_converge}, which follow from several auxiliary lemmas. 
For simplicity, we let $m$ be the number of beamlets and $N:= \sum_{j \in \mathcal{J}^\text{dv}}n_j$ be the number of dose--volume constrained voxels, so that $x \in \mathbb{R}^m$ and $w \in \mathbb{R}^N$.
We begin by proving that all fixed points of Algorithm~\ref{algo:pg} are local minimizers of $g$ on $\Omega$.

\begin{lemma}
    The objective $f(x,w)$ is strongly convex on $\mathbb{R}^m \times \mathbb{R}^N$.\label{lem:f_convex}
\end{lemma}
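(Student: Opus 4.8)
The plan is to exploit that $f$ is a convex quadratic, so its Hessian is a \emph{constant} symmetric matrix $H$, and strong convexity with modulus $\mu = \lambda_{\min}(H)$ is then equivalent to showing $H \succ 0$. Since the only term of $f$ that is not already a squared affine map is $\tfrac{\lambda}{2}\|x\|_2^2$, which is itself a squared scaled linear map, the homogeneous (purely quadratic) part of $f$ is
\begin{equation}
    Q(x,w) = \sum_{i\in\mathcal{I}} \frac{\alpha_i}{2n_i}\|A_i x\|_2^2 + \sum_{j\in\mathcal{J}^\text{dv}} \frac{\alpha_j}{2n_j}\|w_j - A_j x\|_2^2 + \frac{\lambda}{2}\|x\|_2^2,
\end{equation}
and $\langle H(x,w),(x,w)\rangle = 2\,Q(x,w)$. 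Because every summand is a nonnegative quadratic, $Q \geq 0$, which already confirms $H \succeq 0$ (convexity of $f$); the real work is to upgrade this to strict positivity with a uniform lower bound.

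First I would observe that $Q(x,w) = 0$ forces each nonnegative summand to vanish. The regularization term gives $\tfrac{\lambda}{2}\|x\|_2^2 = 0$, and since $\lambda > 0$ this yields $x = 0$. Substituting $x=0$ into the coupling terms, $\tfrac{\alpha_j}{2n_j}\|w_j - A_j x\|_2^2 = \tfrac{\alpha_j}{2n_j}\|w_j\|_2^2 = 0$ forces $w_j = 0$ for every $j \in \mathcal{J}^\text{dv}$ (recall $\alpha_j, n_j > 0$). Hence $Q(x,w) = 0$ only at $(x,w)=(0,0)$, so $Q(x,w) > 0$ for all $(x,w)\neq 0$, and therefore $H \succ 0$.

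Then I would conclude the lemma: since $f$ is twice continuously differentiable with constant positive-definite Hessian $H$, it is strongly convex on $\mathbb{R}^m\times\mathbb{R}^N$ with modulus $\mu = \lambda_{\min}(H) > 0$; equivalently, $f(x,w) - \tfrac{\mu}{2}\big(\|x\|_2^2 + \|w\|_2^2\big)$ is convex. Note that the PTV terms play no role in this argument and may be discarded, since they are nonnegative.

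The main subtlety to flag is that each individual piece of $Q$ is rank-deficient: the coupling terms are flat along the subspace $\{w_j = A_j x\}$, while the remaining terms are flat along $\{x = 0\}$, so strict positivity cannot be read off from any single summand. The key structural fact is that these degenerate subspaces intersect only at the origin, which is precisely what the two-step ``$x=0$, then $w=0$'' argument verifies. I would also emphasize that positivity of the regularizer $\lambda > 0$ is used essentially: without it, any direction with $w_j = A_j x$ and $x$ in the common kernel of the maps $A_i$ would lie in $\ker H$, destroying even strict convexity. I do not anticipate a genuine obstacle here, only the need to make this intersection argument explicit rather than appealing to a naive Young's-inequality splitting, which loses too much in the $x$-block to certify a positive modulus.
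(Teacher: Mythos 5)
Your proof is correct, and it takes a genuinely different route from the paper's. The paper rewrites $f$ (for one PTV and one OAR, asserting the general case is analogous) as a single least-squares objective $\tfrac{1}{2}\|\tilde{A}\tilde{x}-\tilde{d}\|_2^2$, writes out the block Hessian $\tilde{A}^T\tilde{A}$ explicitly, and invokes the Schur complement criterion: the lower-right block $(\sfrac{\alpha_2}{n_2})I$ is positive definite and the Schur complement $(\sfrac{\alpha_1}{n_1})A_1^TA_1+\lambda I$ is positive definite, hence so is the Hessian. You instead show positive definiteness by a null-space argument on the homogeneous quadratic form $Q$: each summand is nonnegative, $Q=0$ forces $x=0$ via the $\lambda$-term and then $w_j=0$ via the coupling terms, so $Q>0$ off the origin and (by finite-dimensionality) $H\succ 0$ with modulus $\lambda_{\min}(H)$. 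Your approach is more elementary, handles arbitrarily many PTVs and OARs uniformly with no extra notation (the paper must gesture at how its block computation generalizes), and isolates exactly which terms carry the strong convexity --- the regularizer and the coupling terms, with the PTV terms discardable; your closing remark that $\lambda>0$ is essential (otherwise directions with $w_j=A_jx$ and $x\in\bigcap_i\ker A_i$ kill definiteness) matches the paper's parenthetical observation that the result does not depend on the rank of the $A_i$. What the paper's Schur-complement route buys in exchange is an explicit closed form for the Hessian blocks and for the complement $(\sfrac{\alpha_1}{n_1})A_1^TA_1+\lambda I$, which makes the source of the curvature visible as a concrete matrix rather than via a vanishing argument; neither proof yields a sharper modulus than the other, since both certify $\lambda_{\min}(H)>0$ without estimating it.
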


\begin{proof}
    For simplicity, we consider the case with one PTV and one OAR, but the arguments used generalize to include additional terms.
    Rearranging variables, the objective $f(x,w)$ can be expressed using a single quadratic term,
    \begin{align}
        f(x, w) &= \frac{\alpha_1}{2n_1} \left\| A_1 x - d_1 \right\|_2^2 + \frac{\alpha_2}{2n_2} \left\| w - \left[ A_2 x - d_2^\text{dv} \right] \right\|_2^2 + \frac{\lambda}{2} \| x \|_2^2, \\
        \begin{split}
            &= \frac{1}{2} \left\| \begin{bmatrix} \sqrt{\frac{\alpha_1}{n_1}} A_1 &\quad 0 \end{bmatrix} \begin{bmatrix} x \\ w \end{bmatrix} - \sqrt{\tfrac{\alpha_1}{n_1}} d_1 \right\|_2^2 \\
            &\quad+ \frac{1}{2} \left\| \begin{bmatrix} \sqrt{\frac{\alpha_2}{n_2}} A_2 &\quad -\sqrt{\frac{\alpha_2}{n_2}} I \end{bmatrix} \begin{bmatrix} x \\ w \end{bmatrix} - \sqrt{\tfrac{\alpha_2}{n_2}} d_2^\text{dv} \right\|_2^2 \\
            &\quad+ \frac{1}{2} \left\| \begin{bmatrix} \sqrt{\lambda} I &\quad 0 \end{bmatrix} \begin{bmatrix} x \\ w \end{bmatrix} \right\|_2^2,
        \end{split} \\
        &= \frac{1}{2} \left\| \begin{bmatrix} \sqrt{\frac{\alpha_1}{n_1}} A_1 & 0 \\ \sqrt{\frac{\alpha_2}{n_2}} A_2 &\quad -\sqrt{\frac{\alpha_2}{n_2}} I \\ \sqrt{\lambda} I & 0 \end{bmatrix} \begin{bmatrix} x \\ w \end{bmatrix} - \begin{bmatrix} \sqrt{\frac{\alpha_1}{n_1}} d_1 \\ \sqrt{\frac{\alpha_2}{n_2}} d_2^\text{dv} \\ 0 \end{bmatrix} \right\|_2^2, \\
        &= \frac{1}{2} \left\| \tilde{A} \tilde{x} - \tilde{d} \right\|_2^2,
    \end{align}
    with symmetric Hessian matrix
    \begin{equation}
        \nabla^2 f(\tilde{x}) = \tilde{A}^T\tilde{A} = \begin{bmatrix} \frac{\alpha_1}{n_1} A_1^TA_1 + \frac{\alpha_2}{n_2} A_2^TA_2 + \lambda I &\quad -\frac{\alpha_2}{n_2} A_2^T \\[1ex] -\frac{\alpha_2}{n_2} A_2 & \frac{\alpha_2}{n_2} I \end{bmatrix}.
    \end{equation}
    
    Consider the matrix $\left(\sfrac{\alpha_2}{n_2}\right) I$ and the Schur complement of $\tilde{A}^T\tilde{A}$ in $\left(\sfrac{\alpha_2}{n_2}\right) I$,
    \begin{align}
        \tilde{A}^T\tilde{A} \,/ \left( \frac{\alpha_2}{n_2} I \right) &= \frac{\alpha_1}{n_1} A_1^TA_1 + \frac{\alpha_2}{n_2} A_2^TA_2 + \lambda I - \left( -\frac{\alpha_2}{n_2} A_2^T \right)\left( \frac{\alpha_2}{n_2} I \right) \left( -\frac{\alpha_2}{n_2} A_2 \right), \\
        &= \frac{\alpha_1}{n_1} A_1^TA_1 + \frac{\alpha_2}{n_2} A_2^TA_2 + \lambda I - \frac{\alpha_2}{n_2} A_2^TA_2, \\
        &= \frac{\alpha_1}{n_1} A_1^TA_1 + \lambda I.
    \end{align}
    Because the coefficients $\sfrac{\alpha_1}{n_1}$, $\sfrac{\alpha_2}{n_2}$, and $\lambda$ are positive, the matrices $\left(\sfrac{\alpha_2}{n_2}\right) I$ and $\tilde{A}^T\tilde{A} \, / \left[ \left(\sfrac{\alpha_2}{n_2}\right) I \right]$ are positive definite (note that this does not depend on the rank of matrices $A_1$ or $A_2$), which implies that the Hessian matrix $\tilde{A}^T\tilde{A}$ is also positive definite \cite{boyd2004convex}.
    Therefore the objective $f(x, w)$ with one PTV and one OAR is strongly convex on $\mathbb{R}^m \times \mathbb{R}^{n_2}$.
    
    For multiple PTVs and OARs, the lower-right block of the Hessian matrix becomes a block diagonal matrix with matrices $\left(\sfrac{\alpha_j}{n_j}\right) I$ for $j \in \mathcal{J}^\text{dv}$ along the diagonal, and the Schur complement becomes the matrix $\sum_{i \in \mathcal{I}} A_i^TA_i + \lambda I$.
    Because the coefficients $\sfrac{\alpha_i}{n_i}$ for $i \in \mathcal{I}$, $\sfrac{\alpha_j}{n_j}$ for $j \in \mathcal{J}^\text{dv}$, and $\lambda$ are positive, the arguments above can again be used to show that the objective $f(x, w)$ with multiple PTVs and OARs is strongly convex on $\mathbb{R}^m \times \mathbb{R}^N$.
\end{proof}

\begin{lemma}
    The objective $g(w)$ is strongly convex on $\mathbb{R}^N$.\label{lem:g_convex}
\end{lemma}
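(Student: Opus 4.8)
The plan is to recognize $g$ as the \emph{partial minimization} (infimal projection) of a jointly strongly convex function and to show that this operation preserves strong convexity. First I would absorb the nonnegativity constraint into the objective by writing $g(w) = \min_{x} \tilde f(x,w)$ with $\tilde f(x,w) := f(x,w) + \delta_+(x)$. By Lemma~\ref{lem:f_convex}, $f$ is strongly convex with some modulus $\mu > 0$ (concretely $\mu = \lambda_{\min}(\tilde A^T \tilde A)$), meaning $f - \tfrac{\mu}{2}\|(x,w)\|_2^2$ is convex; since $\delta_+$ is convex and depends only on $x$, the sum $\tilde f - \tfrac{\mu}{2}\|(x,w)\|_2^2$ is convex, so $\tilde f$ is jointly $\mu$-strongly convex on $\mathbb{R}^m \times \mathbb{R}^N$.

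Next I would confirm that the inner minimizer is well defined. For each fixed $w$, the function $\tilde f(\cdot, w)$ is proper, lower semicontinuous, and strongly convex, hence coercive in $x$, so the minimum is attained at a unique point $x(w) \geq 0$ and $g(w) = \tilde f(x(w), w)$ is finite-valued everywhere (the value $f(0,w)$ already furnishes a finite upper bound).

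The core step is a direct convex-combination estimate. Fix $w_0, w_1$ and $\theta \in [0,1]$, set $w_\theta = \theta w_1 + (1-\theta) w_0$, and let $x_0 = x(w_0)$, $x_1 = x(w_1)$. Since the nonnegative orthant is convex, the point $x_\theta := \theta x_1 + (1-\theta) x_0$ is feasible, so $g(w_\theta) \leq \tilde f(x_\theta, w_\theta)$. Applying the strong-convexity inequality for $\tilde f$ along the segment from $(x_0, w_0)$ to $(x_1, w_1)$ gives
\begin{equation}
g(w_\theta) \leq \theta \tilde f(x_1, w_1) + (1-\theta)\tilde f(x_0, w_0) - \frac{\mu}{2}\theta(1-\theta)\left\| (x_1, w_1) - (x_0, w_0)\right\|_2^2.
\end{equation}
Using $\tilde f(x_i, w_i) = g(w_i)$ and discarding the nonnegative term $\|x_1 - x_0\|_2^2$ from the identity $\|(x_1,w_1)-(x_0,w_0)\|_2^2 = \|x_1 - x_0\|_2^2 + \|w_1 - w_0\|_2^2$ yields exactly
\begin{equation}
g(w_\theta) \leq \theta g(w_1) + (1-\theta) g(w_0) - \frac{\mu}{2}\theta(1-\theta)\|w_1 - w_0\|_2^2,
\end{equation}
which is strong convexity of $g$ with modulus $\mu$ on $\mathbb{R}^N$.

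I expect the only delicate point to be the bookkeeping around the inner problem: ensuring $g$ is everywhere finite and that the argmin $x(w)$ is a genuinely attained minimizer, so that the feasible competitor $x_\theta$ can legitimately upper-bound $g(w_\theta)$. A Schur-complement computation of $\nabla^2 g$ as in the proof of Lemma~\ref{lem:f_convex} would also deliver the result in the unconstrained case, but because the constraint $x \geq 0$ makes $g$ only piecewise quadratic, the inequality-based argument above is cleaner and sidesteps differentiability concerns entirely.
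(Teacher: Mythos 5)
Your proposal is correct and follows essentially the same route as the paper's proof: both invoke the strong convexity of $f$ from Lemma~\ref{lem:f_convex}, use the convexity of the non-negative orthant to make the convex combination $\theta x(w_1) + (1-\theta)x(w_0)$ a feasible competitor for the inner minimization, and then discard the non-negative $\|x(w_1)-x(w_0)\|_2^2$ term to obtain the strong-convexity inequality for $g$ with the same modulus $\mu$. The only difference is presentational (you absorb the constraint into $\delta_+$ and remark on coercivity and attainment, where the paper keeps the constraint explicit and cites existence and uniqueness of $x(w)$ directly), so the two arguments are substantively identical.
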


\begin{proof}
    Because the objective $f(x, w)$ is strongly convex on $\mathbb{R}^m \times \mathbb{R}^N$ [Lemma~\ref{lem:f_convex}], there exists a constant $\mu > 0$ such that for all vector pairs $(x, w), \left(x', w'\right) \in \mathbb{R}^m \times \mathbb{R}^N$ and constants $t \in [0, 1]$, we have
    \begin{equation}
        f\left[tx + (1-t)x', tw + (1-t)w'\right] \leq tf(x, w) + (1-t)f\left(x', w'\right) - \frac{\mu}{2}t(1-t)\left( \left\| x - x' \right\|_2^2 + \left\| w - w' \right\|_2^2 \right).
    \end{equation}
    Let $x(w) \in \argmin_{x \geq 0}f(x, w)$. Since $f(x, w)$ is a strongly convex function and the nonnegative orthant is a nonempty closed convex set, the vector $x(w) \in \mathbb{R}_{\geq0}^m$ exists and is uniquely defined for all vectors $w \in \mathbb{R}^N$ \cite{boyd2004convex}, and $g(w) = \min_{x \geq 0}f(x, w) = f(x(w), w)$.
    Furthermore, for all vectors $w, w' \in \mathbb{R}^N$ and constants $t \in [0, 1]$, we have
    \begin{align}
        g\left[tw + (1-t)w'\right] &= \min_{x\geq0} f\left[x, tw + (1-t)w'\right], \\
        &\leq f\left[ tx(w) + (1-t)x\left(w'\right), tw + (1-t)w' \right],\label{ineq:nonneg} \\
        \begin{split}
            &\leq t f\left[x(w), w\right] + (1-t) f\left[x\left(w'\right), w'\right] \\
            &\quad- \frac{\mu}{2}t(1-t)\left( \left\| x(w) - x\left(w'\right) \right\|_2^2 + \left\| w - w' \right\|_2^2 \right),
        \end{split} \\
        &= tg(w) + (1-t)g\left(w'\right) - \frac{\mu}{2}t(1-t)\left( \left\| x(w) - x\left(w'\right) \right\|_2^2 + \left\| w - w' \right\|_2^2 \right), \\
        &\leq tg(w) + (1-t)g\left(w'\right) - \frac{\mu}{2}t(1-t)\left\| w - w' \right\|_2^2,
    \end{align}
    where line \eqref{ineq:nonneg} follows from the fact that the nonnegative orthant is a convex set and $x(w) \geq 0$ for all vectors $w \in \mathbb{R}^N$, so $tx(w) + (1-t)x\left(w'\right) \geq 0$ for all vectors $w, w' \in \mathbb{R}^N$ and constants $t \in [0, 1]$.
    Therefore $g(w)$ is strongly convex on $\mathbb{R}^N$.
\end{proof}

\begin{theorem}
    The fixed points of Algorithm~\ref{algo:pg} are local minimizers of $g$ on $\Omega$.\label{thm:fixedpoint}
\end{theorem}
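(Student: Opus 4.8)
The plan is to push past the stationarity condition \eqref{eq:stationary}. A fixed point of Algorithm~\ref{algo:pg} certainly satisfies $0 \in \nabla g(w^*) + N_\Omega(w^*)$ — this is exactly what the projection's first-order conditions give, as derived just before the theorem — but since $\Omega$ is nonconvex this is only \emph{necessary} for local optimality. Abstract normal-cone calculus certifies that $w^*$ minimizes $g$ along \emph{one} convex branch of $\Omega$, whereas I must rule out descent into \emph{every} branch passing through $w^*$. The two ingredients that make this possible are strong convexity of $g$ (Lemma~\ref{lem:g_convex}) and the explicit combinatorial form of the projection: retain the $\kappa_j := \lfloor n_j p_j/100 \rfloor$ largest entries of $w_j$ and clip the rest to $\min\{0,\cdot\}$.

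First I would read off coordinatewise optimality conditions at a fixed point $w^*$. Writing $q_j := \partial g/\partial w_j(w^*)$ and using $w_j^* = \text{proj}_{\Omega_j}[\,w_j^* - t_j q_j\,]$ (the $w$-update of \eqref{eq:pgsimple} specialized at the fixed point, valid since $t_j>0$), let $T_j$ denote the retained index set and $v^* := w_j^* - t_j q_j$ the pre-projection vector. Comparing $w_j^*$ with $\text{proj}_{\Omega_j}(v^*)$ coordinate by coordinate yields: $q_{j,i}=0$ on $T_j$ and on every coordinate with $w_{j,i}^*<0$; and $q_{j,i}\le 0$ (with $w_{j,i}^*=0$, $v_i^*=-t_jq_{j,i}\ge 0$) on the clipped zero coordinates. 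In particular the strictly positive support $\mathrm{supp}^+(w_j^*)$ lies inside $T_j$.

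Next I would reduce local minimality to a first-order inequality. By strong convexity, $g(w) \ge g(w^*) + \langle \nabla g(w^*),\, w-w^*\rangle$ everywhere, so it suffices to show $\langle \nabla g(w^*),\, w-w^*\rangle \ge 0$ for all $w\in\Omega$ in a neighborhood of $w^*$; this separates as a sum of block terms $\langle q_j,\, w_j-w_j^*\rangle$. The conditions above kill the contributions from $T_j$ and from strictly negative coordinates, leaving only the zero coordinates outside $T_j$, where $q_{j,i}\le 0$. I finish by splitting on the cardinality budget. If the budget is tight, $|\mathrm{supp}^+(w_j^*)|=\kappa_j$, then for $w$ near $w^*$ the $\kappa_j$ strictly positive coordinates persist and exhaust the budget, so no currently-zero coordinate can turn positive; hence $w_{j,i}\le 0 = w_{j,i}^*$ there and $q_{j,i}(w_{j,i}-w_{j,i}^*)\ge 0$. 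If there is slack, $T_j$ must contain a retained coordinate $i'$ with $w_{j,i'}^*\le 0$, so $v_{i'}^*=w_{j,i'}^*\le 0$; since $T_j$ collects the largest entries of $v^*$ while each clipped zero coordinate satisfies $v_i^*\ge 0$ and $v_i^* \le \min_{T_j} v^* \le 0$, the selection forces $v_i^*=0$, i.e. $q_{j,i}=0$ on precisely the coordinates a nearby feasible point could activate. Either way each block term is nonnegative, and summing over $j$ gives $\langle \nabla g(w^*),\, w-w^*\rangle \ge 0$.

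The \emph{main obstacle} is the slack case: local optimality fails unless the gradient vanishes on every coordinate that a nearby feasible $w$ could flip from zero to positive, and this cannot be seen from stationarity alone. The argument that the ``keep the $\kappa_j$ largest'' rule annihilates $q_{j,i}$ on all such activatable coordinates is the crux, and it is the one place where the explicit projection formula — rather than an abstract normal cone — is indispensable. As a bonus, combining the resulting first-order inequality with the strong-convexity lower bound gives $g(w) > g(w^*)$ for nearby $w \ne w^*$, so every fixed point is in fact a strict local minimizer of $g$ on $\Omega$.
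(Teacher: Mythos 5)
Your proof is correct, and its first phase coincides with the paper's: you read off the coordinatewise conditions that the explicit sort--and--clip projection forces at a fixed point (gradient zero on retained coordinates and on strictly negative coordinates, nonpositive gradient on clipped zero coordinates), and you finish using convexity of $g$ (Lemma~\ref{lem:g_convex}). Where you genuinely diverge is the endgame. The paper passes to the single convex branch $\Omega^* = \left\{ w \,:\, M^* w \leq 0 \right\}$ determined by the positive support of $w^*$, shows $-\nabla g\left(w^*\right) \in N_{\Omega^*}\left(w^*\right)$, concludes that $w^*$ is the global minimizer of $g$ on $\Omega^*$, and then asserts that this makes $w^*$ a local minimizer of $g$ on $\Omega$. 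That last inference is precisely the point you flag as the crux: when some block has slack, i.e.\ $w_j^*$ has fewer than $\lfloor \sfrac{n_j p_j}{100} \rfloor$ positive entries, every neighborhood of $w^*$ contains feasible points outside $\Omega^*$ (a zero coordinate nudged positive), so minimality on one branch does not by itself give local minimality on the union. Your tight/slack dichotomy closes exactly this step: in tight blocks nearby feasible points cannot activate new coordinates, while in slack blocks the ``keep the largest'' selection rule forces the gradient to vanish on every clipped zero coordinate (clipping a positive pre-projection value requires a full budget of retained positive entries above it), so activation costs nothing to first order. The paper's observation 3 contains this fact implicitly---its clipping sub-case presupposes a full complement of positive entries of $w_j^*$---but never deploys it to justify the final ``hence a local minimizer of $g$ on $\Omega$.'' Your direct verification of $\langle \nabla g\left(w^*\right), w - w^* \rangle \geq 0$ over all nearby feasible $w$ is therefore more airtight than the paper's branch argument at its one delicate step, and as a bonus strong convexity upgrades the conclusion to a strict local minimum.
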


\begin{proof}
    Fixed points $w^* = (w_{j_1}^*, w_{j_2}^*, \dots)$ for $j_i \in \mathcal{J}^\text{dv}$ of Algorithm~\ref{algo:pg} satisfy
    \begin{equation}
        w_j^* \in \text{proj}_{\Omega_j}\left[w_j^* - t_j \frac{\partial g}{\partial w_j} \left(w^*\right)\right].
    \end{equation}
    Based on the form of the projection onto $\Omega_j$, for each element $i$ of a stationary vector $w_j^*$, we know that
    \begin{enumerate}
        \item if $(w_j^*)_i < 0$, then $[\sfrac{\partial g}{\partial w_j}\left(w^*\right)]_i = (w_j^*)_i - (A_j x(w^*) - d_j^\text{dv})_i = 0$, and the dose to voxel $i$ is less than $d_j^\text{dv}$,
        \item if $(w_j^*)_i > 0$, then $[\sfrac{\partial g}{\partial w_j}(w^*)]_i = 0$, $(w_j^*)_i$ is one of the $\lfloor \sfrac{n_j p_j}{100} \rfloor$ largest positive elements of vector $w_j^*$, and the dose to voxel $i$ is greater than $d_j^\text{dv}$, and
        \item if $(w_j^*)_i = 0$, then either $[\sfrac{\partial g}{\partial w_j}(w^*)]_i = 0$, and the dose to voxel $i$ is exactly $d_j^\text{dv}$, or $-\sfrac{1}{t_j} [\sfrac{\partial g}{\partial w_j}(w^*)]_i > 0$ is less than the $\lfloor \sfrac{n_j p_j}{100} \rfloor$ largest positive elements of $w_j^*$, and the dose to voxel $i$ is greater than $d_j^\text{dv}$ (i.e., voxel $i$ violates the dose--volume constraint on OAR $j$).
    \end{enumerate}
    
    Recall that for each $j \in \mathcal{J}^\text{dv}$, the set $\Omega_j = \left\{ w \in \mathbb{R}^{n_j} \,:\, \left\| w_+ \right\|_0 \leq \sfrac{n_j p_j}{100} \right\}$ is the union of finitely many convex sets, each representing a different combination of the $q_j := \lfloor \sfrac{n_j p_j}{100} \rfloor$ elements of the vector $w_j$ allowed to be positive.
    Specifically, for $\ell = 1,\dots,\sfrac{n_j!}{\left(n_j - q_j\right)!q_j!}$, let $M_j^\ell$ be the $n_j \times n_j$ identity matrix with $q_j$ of its diagonal elements set to zero.
    Then the convex set $\Omega_j^\ell := \left\{ w \in \mathbb{R}^{n_j} \,:\, M_j^\ell w \leq 0 \right\}$ represents the set of vectors that satisfy combination $\ell$, and $\Omega_j = \cup_{\ell = 1,\dots, \sfrac{n_j!}{\left(n_j - q_j\right)!q_j!}} \Omega_j^\ell$ is the union of all such convex sets.
    Consequently, the set $\Omega = \prod_{j \in \mathcal{J}^\text{dv}} \Omega_j$, the Cartesian product of the feasible sets for vectors $w_j$, is also the union of finitely many convex sets, each representing a different combination of the $Q := \sum_{j \in \mathcal{J}^\text{dv}}q_j$ elements of $w$ allowed to be positive.
    Because the objective $g$ is strongly convex on any convex subset of its domain $\mathbb{R}^N$ [Lemma~\ref{lem:g_convex}], there is a unique global minimum of $g$ relative to any of these convex subsets of $\Omega$ \cite{boyd2004convex}, each corresponding to a local minimum of $g$ on $\Omega$.
    As a result, there are finitely many local minimizers of $g$ on $\Omega$.
    
    Suppose $w^*$ is a fixed point of Algorithm~\ref{algo:pg}.
    If $w^* \leq 0$, then the vector satisfies all feasible combinations of the $Q$ elements of $w$ allowed to be positive, and thus lies within the intersection of all corresponding convex subsets of $\Omega$.
    Furthermore, from observations 1 and 3 above, we have $\nabla g\left(w^*\right) = 0$, and using the strong convexity of $g$, we can conclude that $w^*$ is the unique global minimimizer of $g$ on $\Omega$ \cite[Theorem 6.12]{rockafellar2009variational}.
    On the other hand, suppose that at least one element of $w^*$ is positive, and let $M^*$ be the $\mathbb{R}^N \times \mathbb{R}^N$ identity matrix where the diagonal elements corresponding to the at most $Q$ positive elements of $w^*$ are set to zero.
    Then $\Omega^* := \left\{ w \in \mathbb{R}^N \,:\, M^*w \leq 0 \right\}$ is a convex subset of $\Omega$ containing $w^*$.
    Again, from observations 1--3 above, we have $[\nabla g\left(w^*\right)]_i = 0$ where $\left(w^*\right)_i \neq 0$ and $[\nabla g\left(w^*\right)]_i \leq 0$ where $\left(w^*\right)_i = 0$, so for all $w \in \Omega^*$ we have
    \begin{equation}
        -\nabla g\left(w^*\right)^T\left(w - w^*\right) = -\nabla g\left(w^*\right)^T w \leq 0,
    \end{equation}
    because positive elements of $w$ correspond to zero elements of the gradient.
    Therefore $-\nabla g\left(w^*\right)^T \in N_{\Omega^*}\left(w^*\right)$ \cite[Theorem 6.9]{rockafellar2009variational}, and by the convexity of $g$ and $\Omega^*$, we know that $w^*$ is the unique global minimizer of $g$ on $\Omega^*$ \cite[Theorem 6.12]{rockafellar2009variational}, and hence a local minimizer of $g$ on $\Omega$.
\end{proof}

We now establish the convergence of Algorithm~\ref{algo:pg} to a local minimum of $g$ on $\Omega$.

\begin{lemma}
    For all vectors $w, w' \in \mathbb{R}^N$, we have
    \begin{equation}
        g(w) \leq g\left(w'\right) + \nabla g\left(w'\right)^T\left(w - w'\right) + \sum_{j \in \mathcal{J}^\text{dv}}\frac{\alpha_j}{2n_j}\left\| w - w'\right\|_2^2.
    \end{equation}\label{lem:descent}
\end{lemma}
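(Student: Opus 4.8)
The plan is to prove this descent inequality directly from the partial-minimization structure of $g$, bypassing any appeal to a global Lipschitz constant of $\nabla g$. First I would fix $w'$ and let $x(w') \in \argmin_{x \geq 0} f(x, w')$, which exists and is unique by strong convexity of $f$ (Lemma~\ref{lem:f_convex}), so that $g(w') = f(x(w'), w')$. The crux of the argument is that $x(w')$ is feasible (nonnegative) but generally suboptimal for the minimization defining $g(w)$, which yields the one-sided bound
\[
    g(w) = \min_{x \geq 0} f(x, w) \leq f(x(w'), w).
\]

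Next I would exploit that, with $x = x(w')$ held fixed, the function $f(x(w'), \cdot)$ depends on $w$ only through the coupling terms $\tfrac{\alpha_j}{2n_j}\|w_j - r_j\|_2^2$, where $r_j := A_j x(w') - d_j^\text{dv}$; the PTV least-squares terms and the regularizer $\tfrac{\lambda}{2}\|x(w')\|_2^2$ are identical in $f(x(w'), w)$ and in $f(x(w'), w') = g(w')$. Subtracting therefore cancels every $w$-independent contribution, leaving
\[
    g(w) - g(w') \leq \sum_{j \in \mathcal{J}^\text{dv}} \frac{\alpha_j}{2n_j}\left(\|w_j - r_j\|_2^2 - \|w_j' - r_j\|_2^2\right).
\]
I would then expand each difference of squares via $\|a\|_2^2 - \|b\|_2^2 = \|a-b\|_2^2 + 2\langle b, a-b\rangle$ with $a = w_j - r_j$ and $b = w_j' - r_j$, so that $a - b = w_j - w_j'$. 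This produces a quadratic term $\tfrac{\alpha_j}{2n_j}\|w_j - w_j'\|_2^2$ together with a linear term $\tfrac{\alpha_j}{n_j}\langle w_j' - r_j,\, w_j - w_j'\rangle$.

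To finish, I would recognize the gradient formula $\tfrac{\partial g}{\partial w_j}(w') = \tfrac{\alpha_j}{n_j}(w_j' - r_j)$ established earlier through \cite[Theorem 1]{zheng2018relax}, so that the linear terms sum exactly to $\nabla g(w')^T(w - w')$, while the block quadratic terms satisfy $\sum_j \tfrac{\alpha_j}{2n_j}\|w_j - w_j'\|_2^2 \leq \big(\sum_j \tfrac{\alpha_j}{2n_j}\big)\|w - w'\|_2^2$ because each block norm is dominated by the full norm. I do not expect a serious obstacle: the entire proof rests on the single suboptimality inequality $g(w) \leq f(x(w'), w)$, and everything afterward is exact algebra. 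The only points requiring care are that $x(w')$ is well-defined (guaranteed by strong convexity, Lemmas~\ref{lem:f_convex} and~\ref{lem:g_convex}) and that the gradient identity is evaluated at $w'$ rather than $w$. I would also note that the derivation in fact gives the tighter block-wise majorant $\sum_j \tfrac{\alpha_j}{2n_j}\|w_j - w_j'\|_2^2$, which relaxes to the stated inequality.
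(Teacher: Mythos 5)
Your proof is correct, and it takes a genuinely different route from the paper's. The paper proves Lemma~\ref{lem:descent} by the classical descent-lemma argument (modeled on \cite[Lemma 1.2.3]{nesterov2013introductory}): it writes $g(w) - g(w') - \nabla g(w')^T(w - w')$ as an integral of gradient differences and bounds it block by block using the Lipschitz estimates $\text{lip}\left(\sfrac{\partial g}{\partial w_j}\right) \leq \sfrac{\alpha_j}{n_j}$ together with Cauchy--Schwarz. You instead exploit the partial-minimization structure directly: since $x(w') \geq 0$ is feasible but generally suboptimal for the problem defining $g(w)$, the function $w \mapsto f\left(x(w'), w\right)$ is an exact quadratic majorant of $g$ that touches $g$ at $w'$, and whose gradient there coincides with $\nabla g(w')$ because $\sfrac{\partial g}{\partial w_j}(w') = \sfrac{\alpha_j}{n_j}\left(w_j' - r_j\right)$ with $r_j := A_j x(w') - d_j^\text{dv}$, by the formula from \cite[Theorem 1]{zheng2018relax} already established in Section~\ref{sec:approach} --- an envelope-theorem observation. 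Expanding that quadratic around $w'$ is then exact algebra, so the constant $\sfrac{\alpha_j}{2n_j}$ emerges as the exact curvature of the coupling term rather than as a Lipschitz estimate. What your route buys: it avoids the integral representation, Cauchy--Schwarz, and any appeal to Lipschitz continuity of $\nabla g$ (only the pointwise gradient formula at $w'$ is needed), and it produces the tighter block-wise majorant $\sum_{j \in \mathcal{J}^\text{dv}} \frac{\alpha_j}{2n_j}\left\|w_j - w_j'\right\|_2^2$ with no extra work. What the paper's route buys: genericity --- it applies verbatim to any differentiable function with block-Lipschitz partial gradients, independent of the value-function structure. Note that the paper's proof in fact also ends with the block-wise bound, and both versions dominate the stated inequality since $\left\|w_j - w_j'\right\|_2 \leq \left\|w - w'\right\|_2$ for each $j$.
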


\begin{proof}
    Following the proof from \cite[Lemma 1.2.3]{nesterov2013introductory}, for all vectors $w, w' \in \mathbb{R}^N$, we have
    \begin{align}
        g(w) &= g\left(w'\right) + \int_0^1 \nabla g\left[w' + \tau \left(w - w'\right)\right]^T\left(w - w'\right) d\tau, \\
        &= g\left(w'\right) + \nabla g\left(w'\right)^T\left(w - w'\right) + \int_0^1 \left\{ \nabla g\left[w'+ \tau\left(w - w'\right)\right] - \nabla g\left(w'\right)\right\}^T\left(w - w'\right) d\tau.
    \end{align}
    Recall that for OARs $j \in \mathcal{J}^\text{dv}$, we have $\text{lip}\left(\sfrac{\partial g}{\partial w_j}\right) \leq \frac{\alpha_j}{n_j}$.
    Therefore
    \begin{align}
        &\left| g(w) - g\left(w'\right) - \nabla g\left(w'\right)^T\left(w - w'\right) \right| \\
        &\quad= \left| \int_0^1 \left\{ \nabla g\left[w' + \tau\left(w - w'\right)\right] - \nabla g\left(w'\right)\right\}^T\left(w - w'\right) d\tau \right|, \\
        &\quad\leq \int_0^1 \left| \left\{ \nabla g\left[w' + \tau\left(w - w'\right)\right] - \nabla g\left(w'\right)\right\}^T\left(w - w'\right)\right| d\tau, \\
        &\quad= \int_0^1 \left| \sum_{j \in \mathcal{J}^\text{dv}} \left\{ \frac{\partial g}{\partial w_j} \left[w_j' + \tau \left(w_j - w_j'\right) \right] - \frac{\partial g}{\partial w_j}\left(w_j'\right) \right\}^T\left(w_j - w_j'\right) \right| d\tau, \\
        &\quad\leq \int_0^1 \sum_{j \in \mathcal{J}^\text{dv}} \left| \left\{ \frac{\partial g}{\partial w_j} \left[w_j' + \tau \left(w_j - w_j'\right) \right] - \frac{\partial g}{\partial w_j}\left(w_j'\right) \right\}^T\left(w_j - w_j'\right) \right| d\tau, \\
        &\quad \leq \int_0^1 \sum_{j \in \mathcal{J}^\text{dv}} \left\| \frac{\partial g}{\partial w_j} \left[w_j' + \tau\left(w_j - w_j'\right) \right] - \frac{\partial g}{\partial w_j} \left(w_j'\right) \right\|_2 \left\| w_j - w_j'\right\|_2 d\tau, \\
        &\quad\leq \int_0^1 \sum_{j \in \mathcal{J}^\text{dv}} \frac{\tau \alpha_j}{n_j} \left\| w_j - w_j' \right\|_2^2 d\tau = \sum_{j \in \mathcal{J}^\text{dv}} \frac{\alpha_j}{2n_j} \left\| w_j - w_j' \right\|_2^2,
    \end{align}
    which implies that
    \begin{equation}
        g(w) \leq g\left(w'\right) + \nabla g\left(w'\right)^T\left(w - w'\right) + \sum_{j \in \mathcal{J}^\text{dv}} \frac{\alpha_j}{2n_j} \left\| w_j - w_j'\right\|_2^2
    \end{equation}
    for all $w, w' \in \mathbb{R}^N$.
\end{proof}

\begin{lemma}
    For fixed step sizes $t_j \in \left(0,\sfrac{n_j}{\alpha_j}\right]$ for all $j \in \mathcal{J}^\text{dv}$, the sequence of objective values $\left\{g\left(w^{(k)}\right)\right\}_{k \in \mathbb{N}}$ generated by Algorithm~\ref{algo:pg} is nonincreasing.\label{lem:g_decreasing}
\end{lemma}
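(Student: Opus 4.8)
The plan is to combine the descent inequality of Lemma~\ref{lem:descent} with a one-step decrease estimate coming from the projection. First I would observe that the $w$-update in Algorithm~\ref{algo:pg} is exactly a projected gradient step for $g$: since $f$ is strongly convex in $x$ (Lemma~\ref{lem:f_convex}), the minimizer $x^{(k+1)} = \argmin_{x\ge 0} f(x,w^{(k)})$ is the unique $x(w^{(k)})$ appearing in the gradient formula, so
\begin{equation*}
\frac{\alpha_j t_j}{n_j}\bigl[w_j^{(k)} - (A_j x^{(k+1)} - d_j^{\text{dv}})\bigr] = t_j\,\frac{\partial g}{\partial w_j}\bigl(w^{(k)}\bigr),
\end{equation*}
and hence $w_j^{(k+1)} \in \text{proj}_{\Omega_j}\bigl[w_j^{(k)} - t_j\,\tfrac{\partial g}{\partial w_j}(w^{(k)})\bigr]$. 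Writing $\Delta_j := w_j^{(k+1)} - w_j^{(k)}$ and applying Lemma~\ref{lem:descent} with $w = w^{(k+1)}$, $w' = w^{(k)}$ gives
\begin{equation*}
g\bigl(w^{(k+1)}\bigr) \le g\bigl(w^{(k)}\bigr) + \sum_{j\in\mathcal{J}^{\text{dv}}} \frac{\partial g}{\partial w_j}\bigl(w^{(k)}\bigr)^{\!T}\Delta_j + \sum_{j\in\mathcal{J}^{\text{dv}}} \frac{\alpha_j}{2n_j}\|\Delta_j\|_2^2 .
\end{equation*}

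The key step, and the only place the nonconvexity of $\Omega_j$ intervenes, is to bound the inner-product term. Because every iterate lies in $\Omega$ (the initialization is a projection and each update projects back), $w_j^{(k)}$ is itself a feasible competitor for the projection defining $w_j^{(k+1)}$. Thus, with $u_j := w_j^{(k)} - t_j\,\tfrac{\partial g}{\partial w_j}(w^{(k)})$, minimality of the squared distance over $\Omega_j$ yields $\|w_j^{(k+1)} - u_j\|_2^2 \le \|w_j^{(k)} - u_j\|_2^2$. Expanding and cancelling the common $t_j^2\|\tfrac{\partial g}{\partial w_j}(w^{(k)})\|_2^2$ term gives
\begin{equation*}
\frac{\partial g}{\partial w_j}\bigl(w^{(k)}\bigr)^{\!T}\Delta_j \le -\frac{1}{2t_j}\|\Delta_j\|_2^2 .
\end{equation*}
Crucially, this argument never uses convexity of $\Omega_j$ — only that $w_j^{(k)}$ is feasible — so it sidesteps the usual obtuse-angle projection inequality, which is unavailable here.

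Substituting this bound into the descent inequality and collecting terms gives
\begin{equation*}
g\bigl(w^{(k+1)}\bigr) \le g\bigl(w^{(k)}\bigr) - \sum_{j\in\mathcal{J}^{\text{dv}}}\Bigl(\frac{1}{2t_j} - \frac{\alpha_j}{2n_j}\Bigr)\|\Delta_j\|_2^2 .
\end{equation*}
Finally I would invoke the step-size hypothesis $t_j \le n_j/\alpha_j$, equivalently $1/t_j \ge \alpha_j/n_j$, so that each coefficient $\tfrac{1}{2t_j} - \tfrac{\alpha_j}{2n_j}$ is nonnegative; the entire sum is then nonnegative, whence $g(w^{(k+1)}) \le g(w^{(k)})$ for every $k$, and the sequence $\{g(w^{(k)})\}$ is nonincreasing.

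The main obstacle is precisely the nonconvexity of the feasible sets $\Omega_j$: standard projected-gradient descent proofs lean on the variational characterization of projections onto convex sets, which does not hold in our setting. The resolution is to exploit only the distance-minimizing property of the projection together with feasibility of the current iterate $w_j^{(k)} \in \Omega_j$ — a property guaranteed for all iterates of Algorithm~\ref{algo:pg} — which is exactly strong enough to yield the one-step decrease estimate. Everything else is the routine descent-lemma bookkeeping, with the cap $t_j \le n_j/\alpha_j$ chosen so that the quadratic overshoot from Lemma~\ref{lem:descent} is dominated by the gain from the projection.
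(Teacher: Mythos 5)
Your proof is correct and follows essentially the same route as the paper's: both derive the key bound $\frac{\partial g}{\partial w_j}(w^{(k)})^T(w_j^{(k+1)} - w_j^{(k)}) \leq -\frac{1}{2t_j}\|w_j^{(k+1)} - w_j^{(k)}\|_2^2$ by using the feasibility of $w_j^{(k)}$ as a competitor in the distance-minimization defining the projection (no convexity of $\Omega_j$ needed), then combine it with Lemma~\ref{lem:descent} and the step-size cap $t_j \leq n_j/\alpha_j$. The only cosmetic difference is that you explicitly verify the algorithm's $w$-update coincides with a projected gradient step on $g$ via the identification $x^{(k+1)} = x(w^{(k)})$, a point the paper establishes earlier in the text and takes for granted in its proof.
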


\begin{proof}
    For each $j \in \mathcal{J}^\text{dv}$, let vectors $w_j^{(k+1)}, w_j^{(k)} \in \Omega_j$ satisfy $w_j^{(k+1)} \in \text{proj}_{\Omega_j}\left[w_j^{(k)} - t_j \sfrac{\partial g}{\partial w_j}\left(w^{(k)}\right)\right]$.
    Then from the definition of the projection operator, we have
    \begin{align}
        \left\|w_j^{(k+1)} - \left[ w_j^{(k)} - t_j\frac{\partial g}{\partial w_j}\left(w^{(k)}\right)\right]\right\|_2^2 &= \min_{w_j \in \Omega_j} \left\| w_j - \left[ w_j^{(k)} - t_j \frac{\partial g}{\partial w_j} g\left(w^{(k)}\right)\right]\right\|_2^2, \\
        &\leq \left\| w_j^{(k)} - \left[ w_j^{(k)} - t_j \frac{\partial g}{\partial w_j} g\left(w^{(k)}\right)\right]\right\|_2^2, \\
        &= \left\| t_j \frac{\partial g}{\partial w_j} \left(w^{(k)}\right) \right\|_2^2,
    \end{align}
    which simplifies to
    \begin{equation}
        \frac{\partial g}{\partial w_j}\left(w^{(k)}\right)^T \left(w_j^{(k+1)} - w_j^{(k)}\right) \leq -\frac{1}{2t_j} \left\| w_j^{(k+1)} - w_j^{(k)}\right\|_2^2.\label{ineq:p3}
    \end{equation}
    Noting that $\Omega \subset \mathbb{R}^N$, we can combine inequality \eqref{ineq:p3} with Lemma~\ref{lem:descent} to get
    \begin{align}
        g\left(w^{(k+1)}\right) &\leq g\left(w^{(k)}\right) + \nabla g\left(w^{(k)}\right)^T\left(w^{(k+1)} - w^{(k)}\right) + \sum_{j \in \mathcal{J}^\text{dv}}\frac{\alpha_j}{2n_j}\left\| w^{(k+1)} - w^{(k)}\right\|_2^2, \\
        &= g\left(w^{(k)}\right) + \sum_{j \in \mathcal{J}^\text{dv}} \frac{\partial g}{\partial w_j}\left(w^{(k)}\right)^T\left(w_j^{(k+1)} - w_j^{(k)}\right) + \sum_{j \in \mathcal{J}^\text{dv}}\frac{\alpha_j}{2n_j}\left\| w^{(k+1)} - w^{(k)}\right\|_2^2, \\
        &\leq g\left(w^{(k)}\right) - \sum_{j \in \mathcal{J}^\text{dv}} \frac{1}{2t_j} \left\| w_j^{(k+1)} - w_j^{(k)} \right\|_2^2 + \sum_{j \in \mathcal{J}^\text{dv}} \frac{\alpha_j}{2n_j}\left\| w_j^{(k+1)} - w_j^{(k)}\right\|_2^2, \\
        &= g\left(w^{(k)}\right) + \sum_{j \in \mathcal{J}^\text{dv}} \left(\frac{\alpha_j}{2n_j} - \frac{1}{2t_j}\right) \left\| w_j^{(k+1)} - w_j^{(k)} \right\|_2^2.\label{ineq:descent}
    \end{align}
    Therefore we have $g\left(w^{(k+1)}\right) \leq g\left(w^{(k)}\right)$ for all vectors $w_j^{(k+1)}, w_j^{(k)}$ satisfying \\ $w_j^{(k+1)} \in \text{proj}_{\Omega_j}\left[w_j^{(k)} - t_j\frac{\partial g}{\partial w_j}\left(w^{(k)}\right)\right]$ with $t \in \left(0, \sfrac{n_j}{\alpha_j}\right]$ for all $j \in \mathcal{J}^\text{dv}$.
\end{proof}

\vspace{1ex}
\begin{proof}{(Proof of Theorem~\ref{thm:g_converge}.)}
    From Lemma~\ref{lem:g_decreasing}, we know that if $t_j \in \left(0, \sfrac{n_j}{\alpha_j}\right]$ for all $j \in \mathcal{J}^\text{dv}$, then the sequence of objective values $\left\{g\left(w^{(k)}\right)\right\}_{k \in \mathbb{N}}$ is nonincreasing, and $g(w) \geq 0$ for all $w \in \mathbb{R}^N$ by definition, so the sequence converges to some constant $g^* \geq 0$ where $g^* \leq g\left(w^{(k)}\right)$ for all $k \in \mathbb{N}$.
    Furthermore, for all convergent subsequences $w^{(p_k)} \to w_p^*$ as $k \to \infty$, we have $g\left(w_p^*\right) = g^*$.
    Because $w_p^*$ is a fixed point of Algorithm~\ref{algo:pg}, it must also be a local minimizer of $g$ on $\Omega$ [Theorem~\ref{thm:fixedpoint}], so $g^*$ is a local minimum of $g$ on $\Omega$.
\end{proof}

Finally, we prove the convergence of the iterates of Algorithm~\ref{algo:pg} to a local minimizer of $g$ on $\Omega$ for the case when $t_j < \sfrac{n_j}{\alpha_j}$ for all $j \in \mathcal{J}^\text{dv}$.

\vspace{1ex}
\begin{proof}{(Proof of Theorem~\ref{thm:w_converge}.)}
    From Theorem~\ref{thm:g_converge} we have $g\left(w^{(k)}\right) \downarrow g^*$ as $k \to \infty$, so for all $\epsilon > 0$, there exists some positive integer $K$ such that for all $k \geq K$, we have $g\left(w^{(k)}\right) - g^* < \epsilon$.
    Furthermore, from Lemma~\ref{lem:g_convex} we know that $g$ is strongly convex.
    In particular, there exists some scalar $\mu > 0$ such that
    \begin{equation}
        g(w') \geq g(w) + \nabla g(w)^T(w' - w) + \frac{\mu}{2} \| w' - w \|_2^2
    \end{equation}
    for all vectors $w, w' \in \Omega$.
    Let $w^*$ be a fixed point of Algorithm~\ref{algo:pg} that satisfies $g(w^*) = g^*$.
    Then for any $k \geq K$, we have
    \begin{align}
        \frac{\mu}{2} \left\| w^{(k)} - w^* \right\|_2^2 &\leq g\left(w^{(k)}\right) - g(w^*) - \nabla g(w^*)^T\left(w^{(k)} - w^*\right), \\
        &= g\left(w^{(k)}\right) - g^* - \nabla g(w^*)^Tw^{(k)}, \\
        &< \epsilon - \nabla g(w^*)^Tw^{(k)}, \label{eq:wDIffBound}
    \end{align}
    where $\nabla g(w^*)^Tw^* = 0$ because nonzero elements of $w^*$ correspond to zero elements of $\nabla g(w^*)$ (see observations 1 and 2 from Theorem~\ref{thm:fixedpoint}).
    We consider two cases based on the gradient of $g$ at $w^*$.
    
    \textbf{Case 1.} Suppose $\nabla g(w^*) = 0$, and set $\delta = \sqrt{\sfrac{2\epsilon}{\mu}}$.
    Then from Theorem~\ref{thm:g_converge} and inequality~\eqref{eq:wDIffBound}, we know that for all $\delta > 0$, there exists some positive integer $K$ such that for all $k \geq K$, we have
    \begin{equation}
        \left\| w^{(k)} - w^* \right\|_2  < \sqrt{\frac{2\epsilon}{\mu}} = \delta.
    \end{equation}
    Therefore $w^{(k)} \to w^*$ as $k \to \infty$, and the sequence of iterates $\left\{w^{(k)}\right\}_{k\in\mathbb{N}}$ converges to a fixed point of Algorithm~\ref{algo:pg}.
    In fact, because $g$ is strongly convex and $\nabla g(w^*) = 0$, the fixed point $w^*$ is the unique global minimizer of $g$ on $\Omega$, and $g(w^*) = g^*$ is the global minimum.
    
    \textbf{Case 2.} Suppose $[\nabla g(w^*)]_i \neq 0$ for at least one element $i$.
    From observation 3 of Theorem~\ref{thm:fixedpoint}, this will occur exactly when $-\sfrac{1}{t_j}[\sfrac{\partial g}{\partial w_j}(w^*)]_i > 0$ is less than the $q_j$ largest positive elements of $w_j^*$, which means that the subvector $w_j^*$ has reached its maximum number of positive entries.
    In other words, all elements of $\nabla g(w^*)$ are zero, except for possibly in entries belonging to subvectors $w_j^*$ that have reached their maximum number of positive elements; in this case, we will have $w_i^* = 0$ and $[\nabla g(w^*)]_i < 0$.
    
    From Theorem~\ref{thm:g_converge}, we know that all convergent subsequences of $\left\{w^{(k)}\right\}_{k \in \mathbb{N}}$ converge to vectors $w^*$ such that $g\left(w^*\right) = g^*$.
    Consider the convergent subsequence $w^{(p_k)} \to w^*$ as $k \to \infty$.
    For all $\delta > 0$, there exists some positive integer $L$ such that for all $k \geq L$, we have
    \begin{equation}
        \left\| w^{(p_k)} - w^* \right\|_2 < \delta.
    \end{equation}
    Let $\omega$ denote the smallest positive element of $w^*$, and let $\delta < \sfrac{\omega}{2}$.
    Then for all $k \geq L$, the iterates $w^{(p_k)}$ must be positive everywhere that $w^*$ is positive, including all subvectors $w_j^*$ that have reached their maximum number of positive elements.
    In particular, this means that $w^{(p_k)}$ cannot be positive where $[\nabla g(w^*)]_i < 0$, which implies that $\nabla g(w^*)^Tw^{(p_k)} \geq 0$.

    Suppose that for some $k \geq L$ and $n > 0$, we also have
    \begin{equation}
        \left\| w^{(p_k+n)} - w^* \right\|_2 < \delta.
    \end{equation}
    Then the iterate $w^{(p_k+n)}$ is positive everywhere that $w^*$ is positive, which means that $\nabla g(w^*)^Tw^{(p_k+n)} \geq 0$.
    If we can guarantee that the next iterate $w^{(p_k+n+1)}$ also satisfies $\nabla g(w^*)^Tw^{(p_k+n+1)} \geq 0$, and we let both $\sqrt{\sfrac{2\epsilon}{\mu}} < \delta$ and $p_k+n+1 \geq K$, then from inequality~$\eqref{eq:wDIffBound}$ we will have
    \begin{equation}
        \left\| w^{(p_k+n+1)} - w^* \right\|_2 < \sqrt{\frac{2\epsilon}{\mu}} < \delta.
    \end{equation}
    The only way that we could have $\nabla g(w^*)^Tw^{(p_k+n+1)} < 0$ would be if there were some element $i$ such that $w_i^{(p_k+n+1)} > 0$ where $w_i^* = 0$ and $[\nabla g(w^*)]_i < 0$, which would need to occur on a subvector $w_j^*$ that has reached its maximum number of positive elements.
    Because $w^{(p_k+n)}$ is positive everywhere that $w^*$ is positive, the subvector $w_j^{(p_k+n)}$ would also have reached its maximum number of positive elements.
    In order for $w_i^{(p_k+n+1)} > 0$ where $w_i^* = 0$ and $w_i^{(p_k+n)} \leq 0$, we would require $w_{\ell}^{(p_k+n+1)} \leq 0$ in an entry $\ell$ where $w_{\ell}^{(p_k+n)} > 0$, that is, we would need to swap the location of a positive element in the subvector.
    However, since we assumed that
    \begin{equation}
        \left\|w^{(p_k+n)} - w^* \right\|_2 < \delta < \frac{\omega}{2},
    \end{equation}
    this would require
    \begin{equation}
        \left\| w^{(p_k+n+1)} - w^{(p_k+n)} \right\|_2 > \frac{\omega}{2}.
    \end{equation}
    
    Now suppose $t_j < \sfrac{n_j}{\alpha_j}$ for all $j \in \mathcal{J}^\text{dv}$, and define $\beta := \min_{j \in \mathcal{J}^\text{dv}}(\sfrac{1}{2t_j} - \sfrac{\alpha_j}{2n_j}) > 0$.
    Then from inequality~\eqref{ineq:descent}, we have for all $k \geq K$
    \begin{align}
        g\left(w^{(k+1)}\right) &\leq g\left(w^{(k)}\right) + \sum_{j \in \mathcal{J}^\text{dv}} \left(\frac{\alpha_j}{2n_j} - \frac{1}{2t_j}\right) \left\| w_j^{(k+1)} - w_j^{(k)} \right\|_2^2 \\
        &\leq g\left(w^{(k)}\right) - \beta \left\| w^{(k+1)} - w^{(k)} \right\|_2^2, \label{ineq:descent2}
    \end{align}
    which means that
    \begin{equation}
        \left\| w^{(k+1)} - w^{(k)} \right\|_2^2 \leq \frac{1}{\beta} \left[g\left(w^{(k)}\right) - g\left(w^{(k+1)}\right)\right] \leq \frac{1}{\beta} \left[ g\left(w^{(k)}\right) - g^*\right] < \frac{\epsilon}{\beta}.
    \end{equation}
    In particular, if we add the assumption that $\sqrt{\sfrac{\epsilon}{\beta}} < \sfrac{\omega}{2}$, then
    \begin{equation}
        \left\| w^{(p_k+n+1)} - w^{(p_k+n)} \right\|_2 < \sqrt{\frac{\epsilon}{\beta}} < \frac{\omega}{2},
    \end{equation}
    and it is impossible to swap the location of the positive elements in the iterates $w^{(p_k+n)}$ and $w^{(p_k+n+1)}$.
    Therefore $\nabla g(w^*)^Tw^{(p_k+n+1)} \geq 0$, and we have
    \begin{equation}
        \left\| w^{(p_k+n+1)} - w^* \right\|_2 < \delta.
    \end{equation}

    By induction, if we choose $\delta < \sfrac{\omega}{2}$ and $\epsilon < \min\left\{ \sfrac{\mu \delta^2}{2}, \sfrac{\beta\omega^2}{4}\right\}$, then for all integers $k \geq \max\left\{p_L, K\right\}$, we have
    \begin{equation}
        \left\| w^{(k)} - w^* \right\|_2 < \delta,
    \end{equation}
    and the sequence of iterates $\left\{w^{(k)}\right\}_{k \in \mathbb{N}}$ converges to a fixed point $w^*$ of Algorithm~\ref{algo:pg} such that $g(w^*) = g^*$, where $g^*$ is a local minimum of $g$ on $\Omega$.
    
    To establish the convergence rate of Algorithm~\ref{algo:pg}, we rearrange the terms in \eqref{ineq:descent2} and sum over $k$ to get
    \begin{align}
        \sum_{\ell=0}^k \left\| w^{(\ell+1)} - w^{(\ell)} \right\|_2^2 &\leq \frac{1}{\beta}\sum_{\ell=0}^k \left[g\left(w^{(\ell)}\right) - g\left(w^{(\ell+1)}\right)\right] \\
        &= \frac{1}{\beta}\left[g\left(w^{(0)}\right) - g\left(w^{(k+1)}\right)\right] \\
        &\leq \frac{1}{\beta} \left[g\left(w^{(0)}\right) - g^*\right].
    \end{align}
    Define $\zeta := \min_{j \in \mathcal{J}^\text{dv}} \left(t_j\right) > 0$.
    For any $\epsilon > 0$, we let
    \begin{align}
        \min_{\ell = 0, \dots, k} \sum_{j \in \mathcal{J}^\text{dv}} \frac{1}{t_j^2} \left\| w_j^{(\ell + 1)} - w_j^{(\ell)} \right\|_2^2 &\leq \sum_{\ell = 0}^k \sum_{j \in \mathcal{J}^\text{dv}} \frac{\left\| w_j^{(\ell + 1)} - w_j^{(\ell)} \right\|_2^2}{(k+1) t_j^2} \\
        &\leq \sum_{\ell = 0}^k \frac{\left\| w^{(\ell + 1)} - w^{(\ell)} \right\|_2^2}{(k+1) \zeta^2} \\
        &\leq \frac{g\left(w^{(0)}\right) - g^*}{(k+1)\beta\zeta^2} < \epsilon,
    \end{align}
    and solving for $k$, we find that we reach an $\epsilon$-accurate solution after $\mathcal{O}\left(\sfrac{1}{\epsilon}\right)$ iterations.
\end{proof}
\end{section}

\newpage
\section{Figures for Section~\ref{sec:compare}
\label{sec:compareFigs}}

\begin{figure}[H]
    \centering
    \begin{overpic}[scale=0.4]{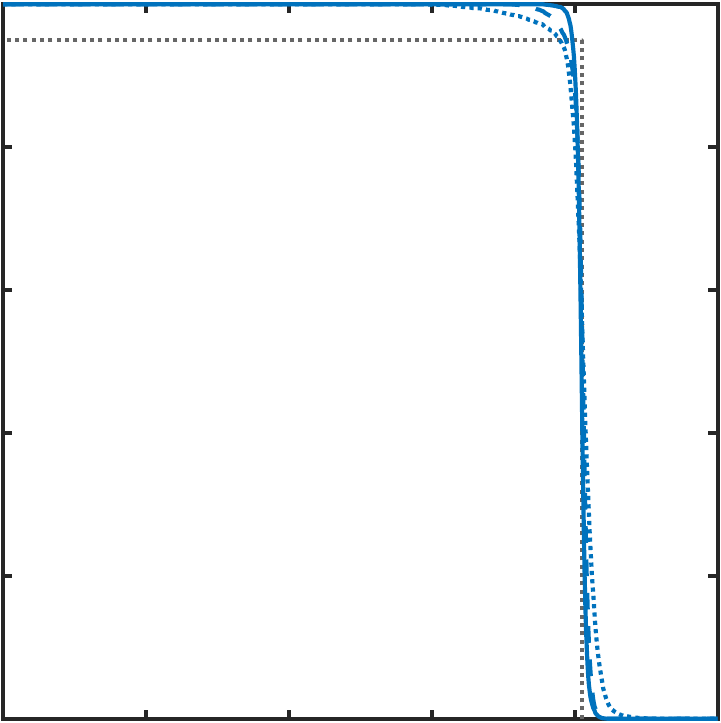}
        \put(-1,-5){\scriptsize0}
        \put(17,-5){\scriptsize20}
        \put(36.5,-5){\scriptsize40}
        \put(57,-5){\scriptsize60}
        \put(76,-5){\scriptsize80}
        \put(94,-5){\scriptsize100}
        \put(-4,-1){\scriptsize0}
        \put(-7,18){\scriptsize20}
        \put(-7,38){\scriptsize40}
        \put(-7,58){\scriptsize60}
        \put(-7,78){\scriptsize80}
        \put(-10,97){\scriptsize100}
        \put(35,-13){\small Dose (Gy)}
        \put(-20,18){\rotatebox{90}{\small Relative Volume (\%)}}
        \put(6,85){\small Prostate}
    \end{overpic} \hspace{1em}
    \begin{overpic}[scale=0.4]{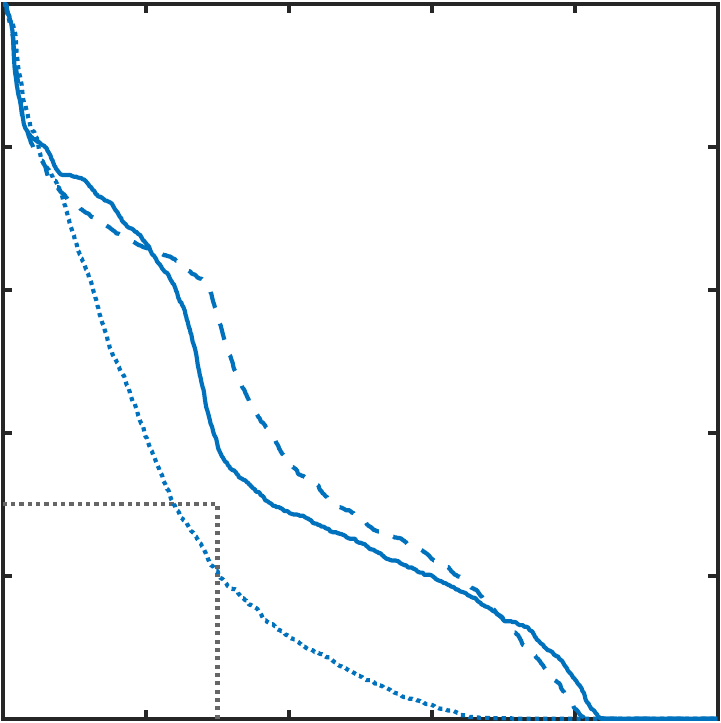}
        \put(-1,-5){\scriptsize0}
        \put(17,-5){\scriptsize20}
        \put(36.5,-5){\scriptsize40}
        \put(57,-5){\scriptsize60}
        \put(76,-5){\scriptsize80}
        \put(94,-5){\scriptsize100}
        \put(35,-13){\small Dose (Gy)}
        \put(70,90){\small Rectum}
        \put(111,90){\color{prostate}\bf-}
        \put(113.5,90){\color{prostate}\bf-}
        \put(116,90){\color{prostate}\bf-}
        \put(122,90){\scriptsize Solution w/ $(\mathcal{P}_2)$}
        \put(111,80){\color{prostate}\bf-}
        \put(116,80){\color{prostate}\bf-}
        \put(122,80){\scriptsize Solution w/ \eqref{eq:iter}}
        \put(111,71){\color{prostate}\bf...}
        \put(122,70){\scriptsize Solution w/ \eqref{eq:slack}}
    \end{overpic}
    \vspace{4ex}
    \caption{Dose--volume histograms for Section~\ref{sec:compareOne} with dose--volume objectives.
    Left: None of the solutions come close to satisfying the lower dose--volume constraint on the PTV, though the solution with $(\mathcal{P}_2)$ achieves the highest PTV D95.
    Right: The solution with \eqref{eq:slack} satisfies the upper dose--volume constraint on the OAR, with only 20.63\% of the rectum volume exceeding 30 Gy, while 38.35\% and 56.55\% of the rectum volume exceeds 30 Gy for the solutions with $(\mathcal{P}_2)$ and \eqref{eq:iter}, respectively.\label{fig:dvh4a}}
\end{figure}

\begin{figure}[H]
    \centering
    \begin{overpic}[scale=0.4]{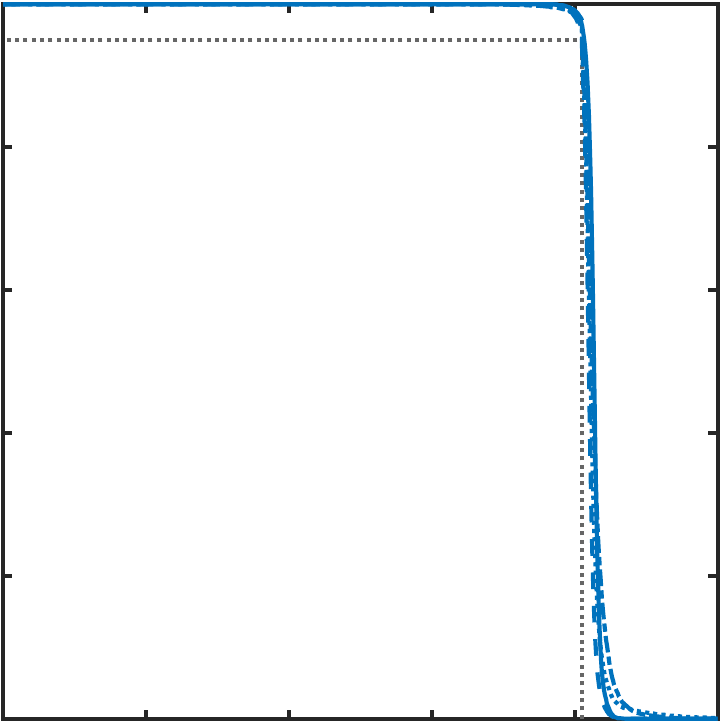}
        \put(-1,-5){\scriptsize0}
        \put(17,-5){\scriptsize20}
        \put(36.5,-5){\scriptsize40}
        \put(57,-5){\scriptsize60}
        \put(76,-5){\scriptsize80}
        \put(94,-5){\scriptsize100}
        \put(-4,-1){\scriptsize0}
        \put(-7,18){\scriptsize20}
        \put(-7,38){\scriptsize40}
        \put(-7,58){\scriptsize60}
        \put(-7,78){\scriptsize80}
        \put(-10,97){\scriptsize100}
        \put(35,-13){\small Dose (Gy)}
        \put(-20,18){\rotatebox{90}{\small Relative Volume (\%)}}
        \put(6,85){\small Prostate}
    \end{overpic} \hspace{1em}
    \begin{overpic}[scale=0.4]{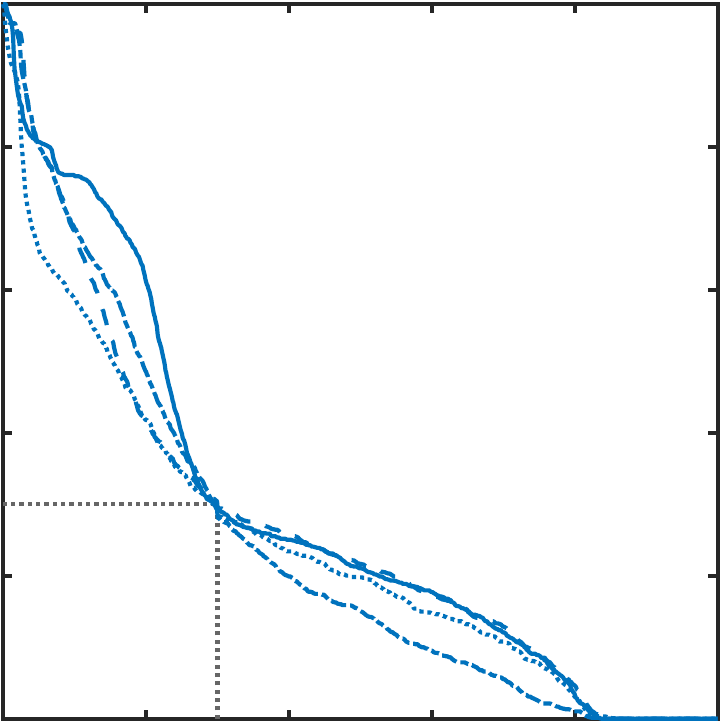}
        \put(-1,-5){\scriptsize0}
        \put(17,-5){\scriptsize20}
        \put(36.5,-5){\scriptsize40}
        \put(57,-5){\scriptsize60}
        \put(76,-5){\scriptsize80}
        \put(94,-5){\scriptsize100}
        \put(35,-13){\small Dose (Gy)}
        \put(70,90){\small Rectum}
        \put(111,90){\color{prostate}\bf-}
        \put(113.5,90){\color{prostate}\bf-}
        \put(116,90){\color{prostate}\bf-}
        \put(122,90){\scriptsize Solution w/ $(\mathcal{P}_2)^a$}
        \put(111,80){\color{prostate}\bf-}
        \put(116,80){\color{prostate}\bf-}
        \put(122,80){\scriptsize Solution w/ $(\mathcal{P}_2)^b$}
        \put(111,71){\color{prostate}\bf...}
        \put(122,70){\scriptsize Solution w/ \eqref{eq:init}}
        \put(111,61){\color{prostate}\bf.}
        \put(114,59.75){\color{prostate}\bf-}
        \put(117,61){\color{prostate}\bf.}
        \put(122,60){\scriptsize Solution w/ \eqref{eq:convex}}
        \put(119,45){\scriptsize$^a$Re-weighted}
        \put(119.5,35){\scriptsize$^b$Polished}
    \end{overpic}
    \vspace{4ex}
    \caption{Dose--volume histograms for Section~\ref{sec:compareOne} with dose--volume constraints.
    Left: All solutions meet the lower dose--volume constraint on the PTV.
    Right: All solutions meet the upper dose--volume constraint on the OAR.\label{fig:dvh4b}}
\end{figure}

\begin{figure}
    \centering
    \begin{overpic}[scale=0.4]{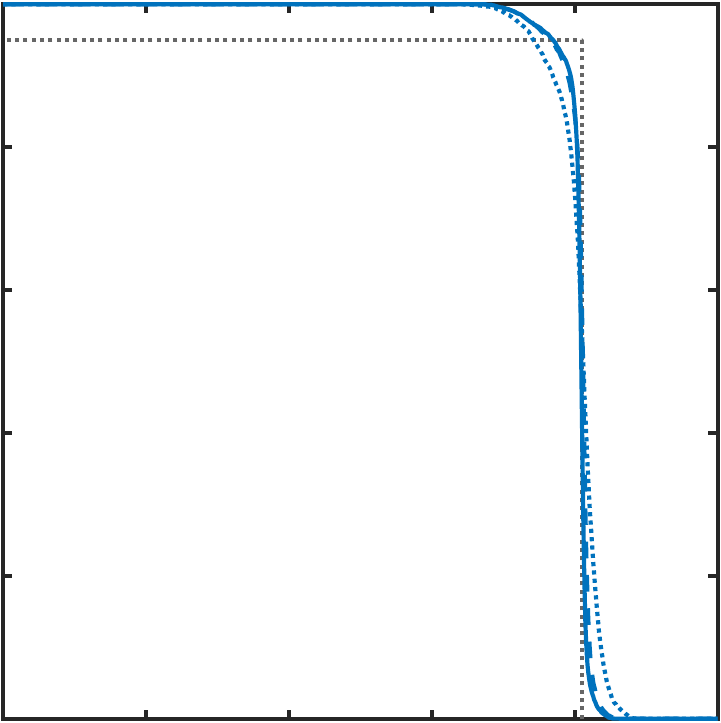}
        \put(-4,-1){\scriptsize0}
        \put(-7,18){\scriptsize20}
        \put(-7,38){\scriptsize40}
        \put(-7,58){\scriptsize60}
        \put(-7,78){\scriptsize80}
        \put(-10,97){\scriptsize100}
        \put(-20,18){\rotatebox{90}{\small Relative Volume (\%)}}
        \put(6,85){\small Prostate}
    \end{overpic} \hspace{0.5em}
    \begin{overpic}[scale=0.4]{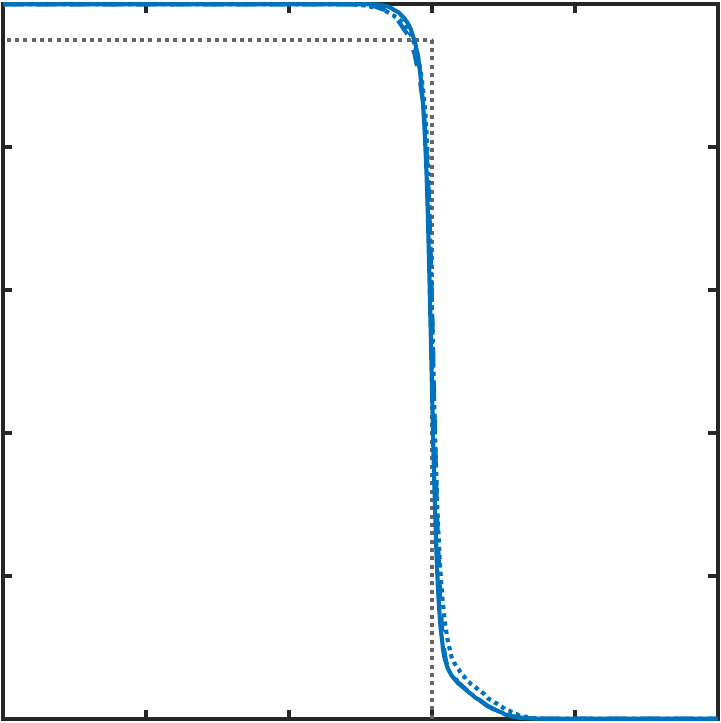}
        \put(6,85){\small Lymph Nodes}
        \put(111,90){\color{prostate}\bf-}
        \put(113.5,90){\color{prostate}\bf-}
        \put(116,90){\color{prostate}\bf-}
        \put(122,90){\scriptsize Solution w/ $(\mathcal{P}_2)$}
        \put(111,80){\color{prostate}\bf-}
        \put(116,80){\color{prostate}\bf-}
        \put(122,80){\scriptsize Solution w/ \eqref{eq:iter}}
        \put(111,71){\color{prostate}\bf...}
        \put(122,70){\scriptsize Solution w/ \eqref{eq:slack}}
    \end{overpic} \\ \vspace*{2.5ex} \hspace{0.1em}
    \begin{overpic}[scale=0.4]{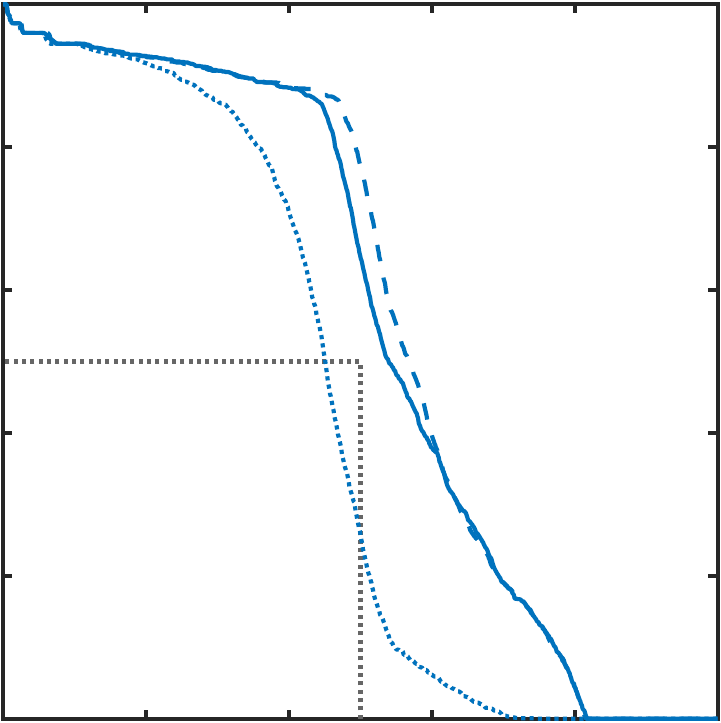}
        \put(-1,-5){\scriptsize0}
        \put(17,-5){\scriptsize20}
        \put(36.5,-5){\scriptsize40}
        \put(57,-5){\scriptsize60}
        \put(76,-5){\scriptsize80}
        \put(94,-5){\scriptsize100}
        \put(-4,-1){\scriptsize0}
        \put(-7,18){\scriptsize20}
        \put(-7,38){\scriptsize40}
        \put(-7,58){\scriptsize60}
        \put(-7,78){\scriptsize80}
        \put(-10,97){\scriptsize100}
        \put(35,-13){\small Dose (Gy)}
        \put(-20,18){\rotatebox{90}{\small Relative Volume (\%)}}
        \put(70,90){\small Rectum}
    \end{overpic} \hspace{0.5em}
    \begin{overpic}[scale=0.4]{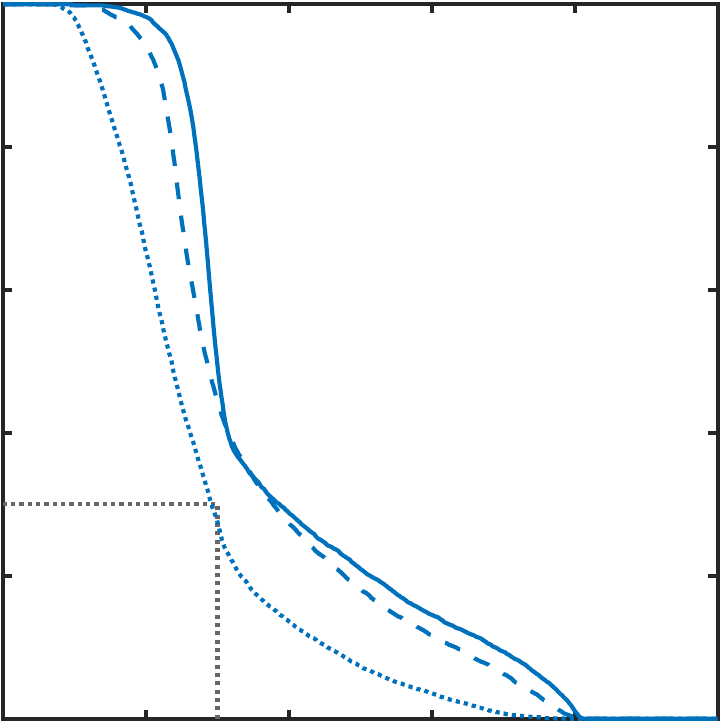}
        \put(-1,-5){\scriptsize0}
        \put(17,-5){\scriptsize20}
        \put(36.5,-5){\scriptsize40}
        \put(57,-5){\scriptsize60}
        \put(76,-5){\scriptsize80}
        \put(94,-5){\scriptsize100}
        \put(35,-13){\small Dose (Gy)}
        \put(70,90){\small Bladder}
    \end{overpic}
    \vspace{4ex}
    \caption{Dose--volume histograms for Section~\ref{sec:compareMultiple} with dose--volume objectives.
    Top row: None of the solutions come close to satisfying the lower dose--volume constraints on the PTVs, though the solution with $(\mathcal{P}_2)$ achieves the highest PTV D95 values.
    Bottom row: The solution with \eqref{eq:slack} satisfies the upper dose--volume constraints on the OARs, with only 25.68\% of the rectum volume exceeding 50 Gy and 27.55\% of the bladder volume exceeding 30 Gy, while 64.58\% and 77.48\% of the rectum volume exceeds 50 Gy and 49.47\% and 44.45\% of the bladder volume exceeds 30 Gy for the solutions with $(\mathcal{P}_2)$ and \eqref{eq:iter}, respectively.\label{fig:dvh5a}}
\end{figure}

\begin{figure}
    \centering
    \begin{overpic}[scale=0.4]{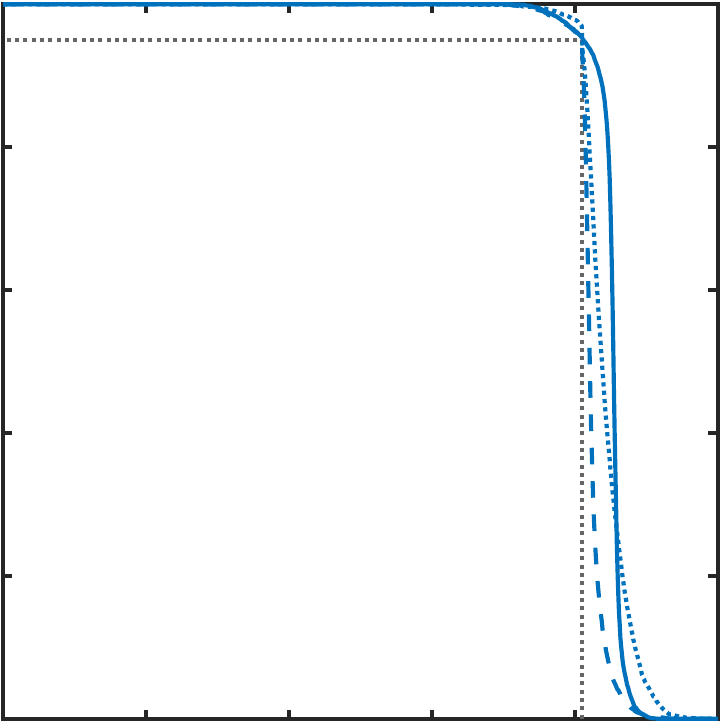}
        \put(-4,-1){\scriptsize0}
        \put(-7,18){\scriptsize20}
        \put(-7,38){\scriptsize40}
        \put(-7,58){\scriptsize60}
        \put(-7,78){\scriptsize80}
        \put(-10,97){\scriptsize100}
        \put(-20,18){\rotatebox{90}{\small Relative Volume (\%)}}
        \put(6,85){\small Prostate}
    \end{overpic} \hspace{0.5em}
    \begin{overpic}[scale=0.4]{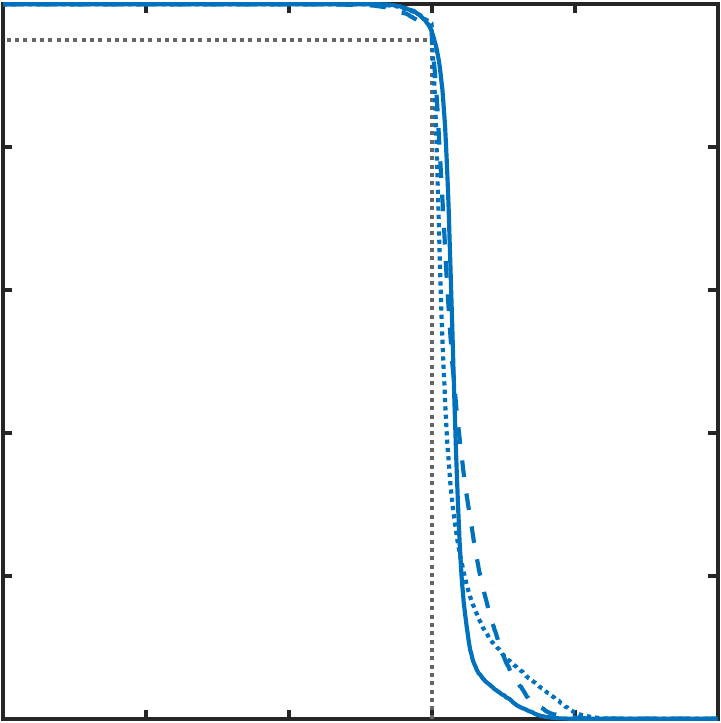}
        \put(6,85){\small Lymph Nodes}
        \put(111,90){\color{prostate}\bf-}
        \put(113.5,90){\color{prostate}\bf-}
        \put(116,90){\color{prostate}\bf-}
        \put(122,90){\scriptsize Solution w/ $(\mathcal{P}_2)^a$}
        \put(111,80){\color{prostate}\bf-}
        \put(116,80){\color{prostate}\bf-}
        \put(122,80){\scriptsize Solution w/ $(\mathcal{P}_2)^b$}
        \put(111,71){\color{prostate}\bf...}
        \put(122,70){\scriptsize Solution w/ \eqref{eq:convex}}
        \put(119,55){\scriptsize$^a$Re-weighted}
        \put(119.5,45){\scriptsize$^b$Polished}
    \end{overpic} \\ \vspace*{2.5ex} \hspace{0.1em}
    \begin{overpic}[scale=0.4]{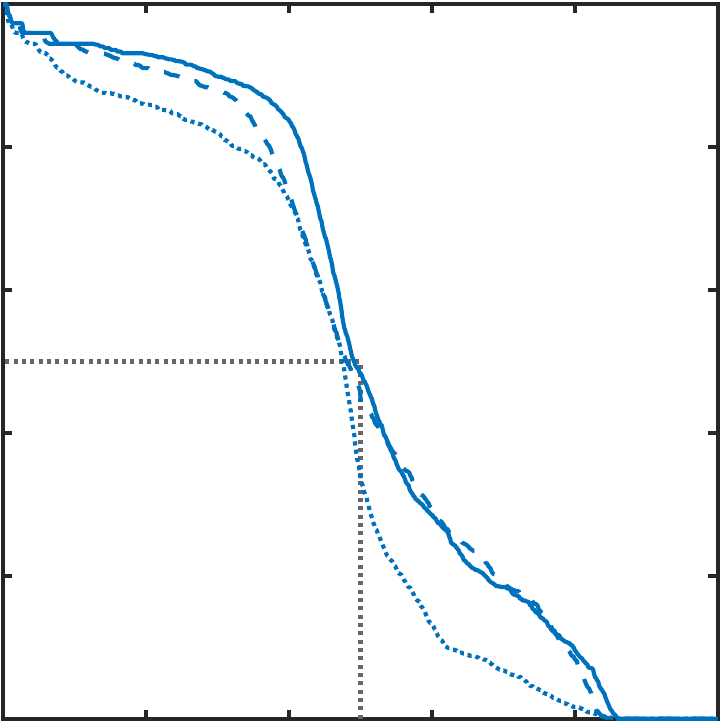}
        \put(-1,-5){\scriptsize0}
        \put(17,-5){\scriptsize20}
        \put(36.5,-5){\scriptsize40}
        \put(57,-5){\scriptsize60}
        \put(76,-5){\scriptsize80}
        \put(94,-5){\scriptsize100}
        \put(-4,-1){\scriptsize0}
        \put(-7,18){\scriptsize20}
        \put(-7,38){\scriptsize40}
        \put(-7,58){\scriptsize60}
        \put(-7,78){\scriptsize80}
        \put(-10,97){\scriptsize100}
        \put(35,-13){\small Dose (Gy)}
        \put(-20,18){\rotatebox{90}{\small Relative Volume (\%)}}
        \put(70,90){\small Rectum}
    \end{overpic} \hspace{0.5em}
    \begin{overpic}[scale=0.4]{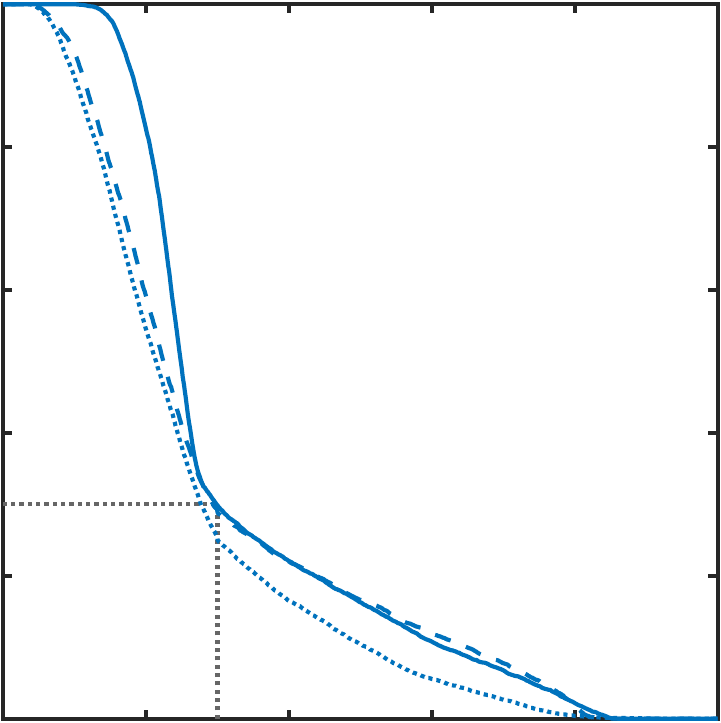}
        \put(-1,-5){\scriptsize0}
        \put(17,-5){\scriptsize20}
        \put(36.5,-5){\scriptsize40}
        \put(57,-5){\scriptsize60}
        \put(76,-5){\scriptsize80}
        \put(94,-5){\scriptsize100}
        \put(35,-13){\small Dose (Gy)}
        \put(70,90){\small Bladder}
    \end{overpic}
    \vspace{4ex}
    \caption{Dose--volume histograms for Section~\ref{sec:compareMultiple} with dose--volume constraints.
    Top row: All solutions meet the lower dose--volume constraints on the PTVs.
    Bottom row: All solutions meet the upper dose--volume constraints on the OARs.\label{fig:dvh5b}}
\end{figure}

\begin{figure}
    \centering
    \begin{overpic}[scale=0.4]{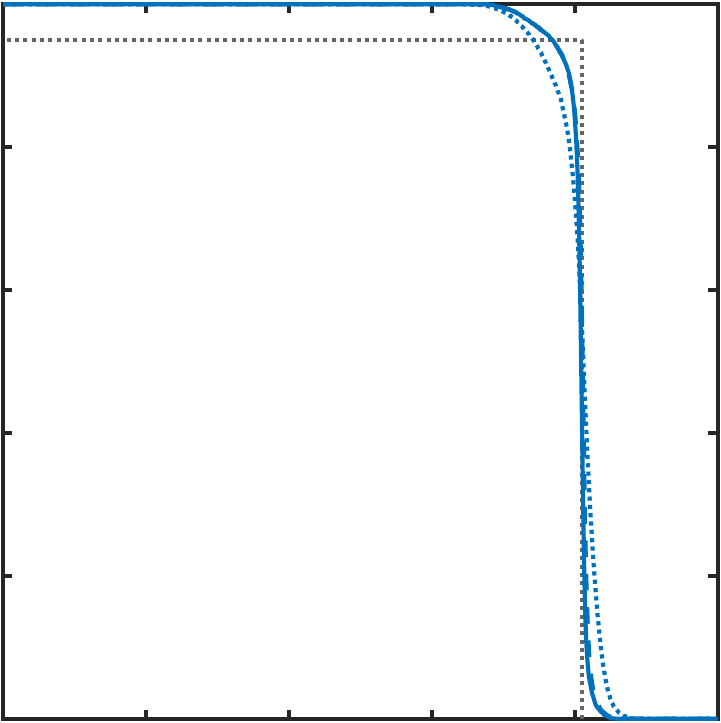}
        \put(-4,-1){\scriptsize0}
        \put(-7,18){\scriptsize20}
        \put(-7,38){\scriptsize40}
        \put(-7,58){\scriptsize60}
        \put(-7,78){\scriptsize80}
        \put(-10,97){\scriptsize100}
        \put(-20,18){\rotatebox{90}{\small Relative Volume (\%)}}
        \put(6,85){\small Prostate}
    \end{overpic} \hspace{0.5em}
    \begin{overpic}[scale=0.4]{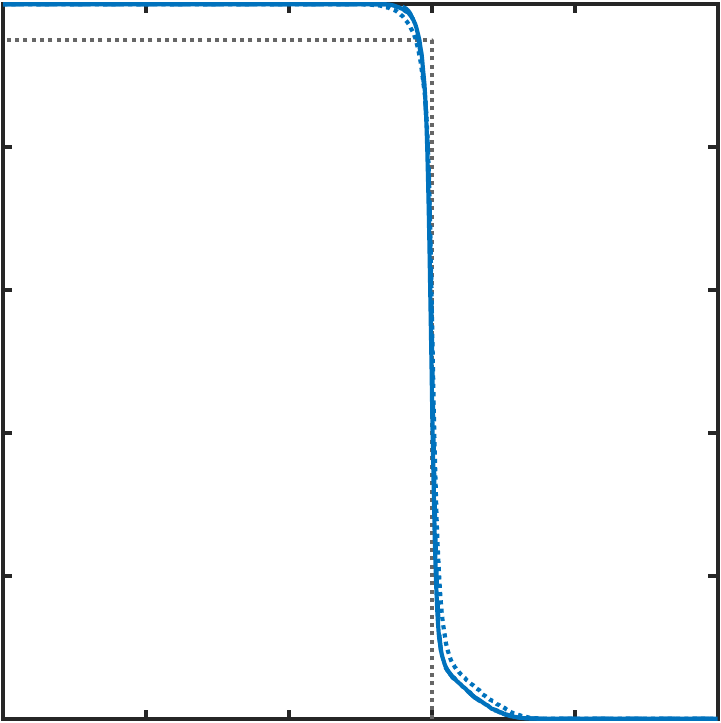}
        \put(6,85){\small Lymph Nodes}
        \put(111,90){\color{prostate}\bf-}
        \put(113.5,90){\color{prostate}\bf-}
        \put(116,90){\color{prostate}\bf-}
        \put(122,90){\scriptsize Solution w/ $(\mathcal{P}_2)$}
        \put(111,80){\color{prostate}\bf-}
        \put(116,80){\color{prostate}\bf-}
        \put(122,80){\scriptsize Solution w/ \eqref{eq:iter}}
        \put(111,71){\color{prostate}\bf...}
        \put(122,70){\scriptsize Solution w/ \eqref{eq:slack}}
    \end{overpic} \\ \vspace*{2.5ex}
    \begin{overpic}[scale=0.4]{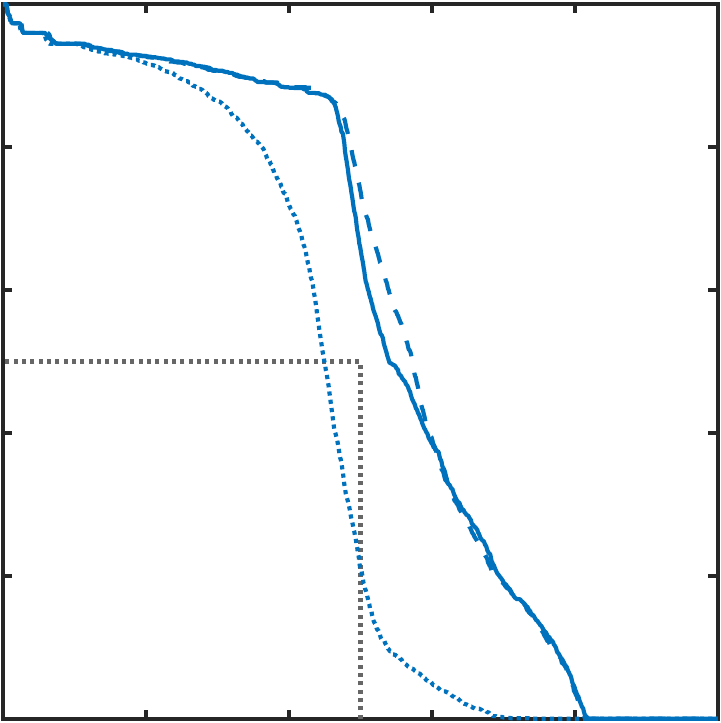}
        \put(-4,-1){\scriptsize0}
        \put(-7,18){\scriptsize20}
        \put(-7,38){\scriptsize40}
        \put(-7,58){\scriptsize60}
        \put(-7,78){\scriptsize80}
        \put(-10,97){\scriptsize100}
        \put(-20,18){\rotatebox{90}{\small Relative Volume (\%)}}
        \put(70,90){\small Rectum}
    \end{overpic} \hspace{0.5em}
    \begin{overpic}[scale=0.4]{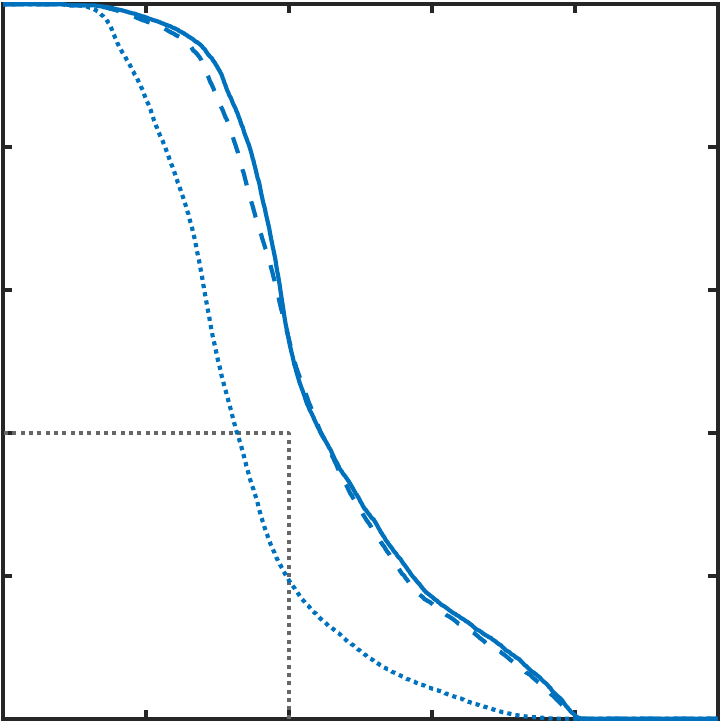}
        \put(70,90){\small Bladder}
    \end{overpic} \\ \vspace*{2.5ex} \hspace{0.1em}
    \begin{overpic}[scale=0.4]{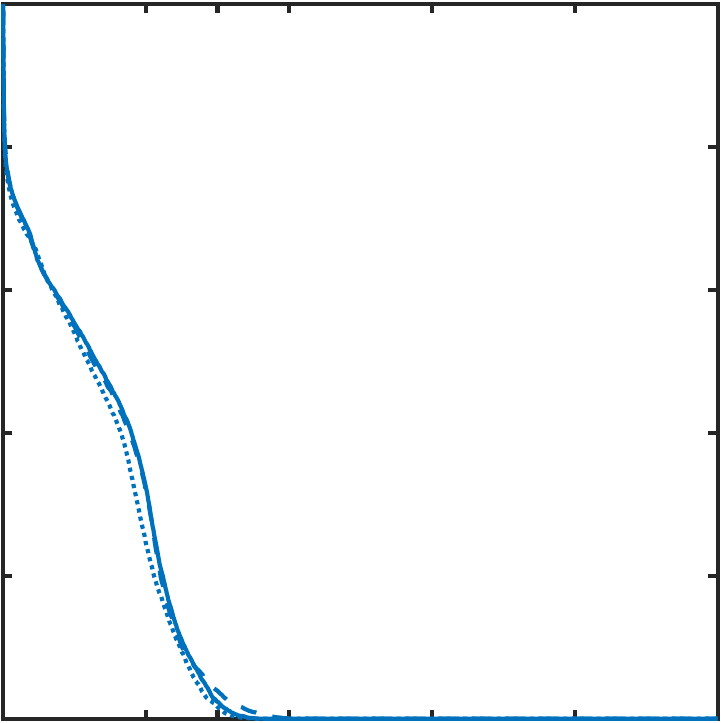}
        \put(-1,-5){\scriptsize0}
        \put(17,-5){\scriptsize20}
        \put(36.5,-5){\scriptsize40}
        \put(57,-5){\scriptsize60}
        \put(76,-5){\scriptsize80}
        \put(94,-5){\scriptsize100}
        \put(-4,-1){\scriptsize0}
        \put(-7,18){\scriptsize20}
        \put(-7,38){\scriptsize40}
        \put(-7,58){\scriptsize60}
        \put(-7,78){\scriptsize80}
        \put(-10,97){\scriptsize100}
        \put(35,-13){\small Dose (Gy)}
        \put(-20,18){\rotatebox{90}{\small Relative Volume (\%)}}
        \put(36,90){\small Left Femoral Head}
    \end{overpic} \hspace{0.5em}
    \begin{overpic}[scale=0.4]{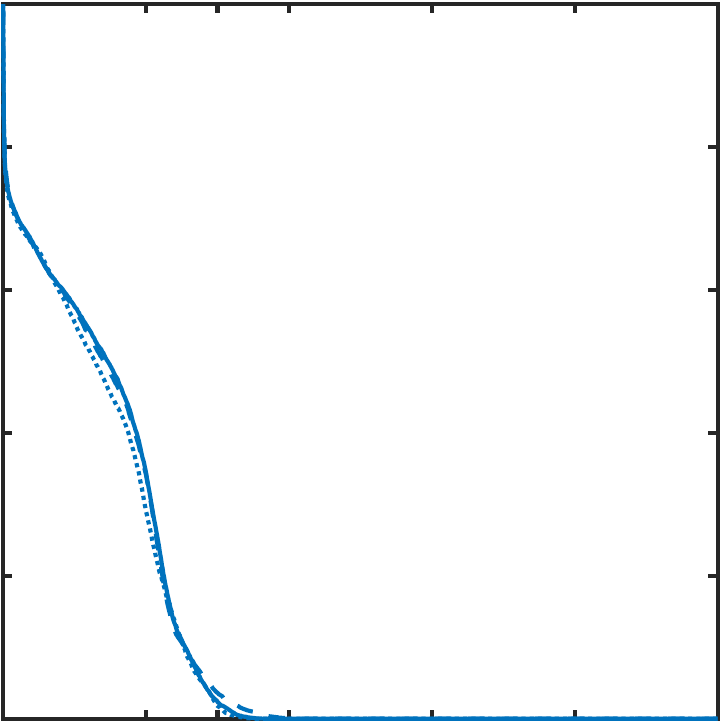}
        \put(-1,-5){\scriptsize0}
        \put(17,-5){\scriptsize20}
        \put(36.5,-5){\scriptsize40}
        \put(57,-5){\scriptsize60}
        \put(76,-5){\scriptsize80}
        \put(94,-5){\scriptsize100}
        \put(35,-13){\small Dose (Gy)}
        \put(31,90){\small Right Femoral Head}
    \end{overpic}
    \vspace{4ex}
    \caption{Dose--volume histograms for Section~\ref{sec:compareMultiple2} with dose--volume objectives.
    Top row: None of the solutions come close to satisfying the lower dose--volume constraints on the PTVs, though the solutions from $(\mathcal{P}_2)$ and \eqref{eq:iter} have higher PTV D95 values than the solution from \eqref{eq:slack}.
    Middle and bottom rows: The solution with \eqref{eq:slack} satisfies the upper dose--volume constraints on the OARs, with only 21.06\% of the rectum volume exceeding 50 Gy, 19.37\% of the bladder volume exceeding 40 Gy, and 1.65\% and 1.79\% of the left and right femoral head volumes exceeding 30 Gy, respectively.
    For the solution with $(\mathcal{P}_2)$, 65.61\% of the rectum volume exceeds 50 Gy, 52.89\% of the bladder volume exceeds 40 Gy, and 2.37\% and 2.38\% of the left and right femoral head volumes exceed 30 Gy, respectively.
    For the solution with \eqref{eq:iter}, 73.89\% of the rectum volume exceeds 50 Gy, 53.02\% of the bladder volume exceeds 40 Gy, and 3.88\% and 3.63\% of the left and right femoral head volumes exceed 30 Gy, respectively.\label{fig:dvh6a}}
\end{figure}

\begin{figure}
    \centering
    \begin{overpic}[scale=0.4]{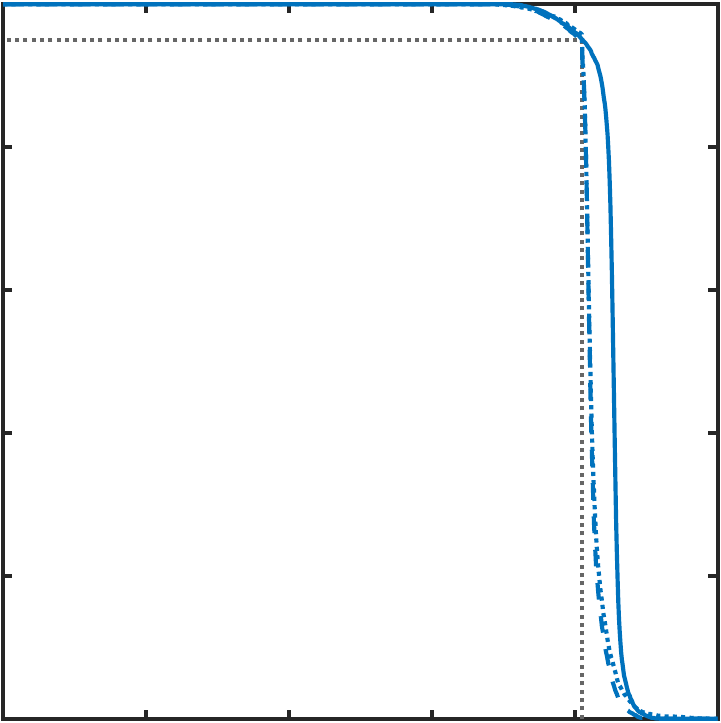}
        \put(-4,-1){\scriptsize0}
        \put(-7,18){\scriptsize20}
        \put(-7,38){\scriptsize40}
        \put(-7,58){\scriptsize60}
        \put(-7,78){\scriptsize80}
        \put(-10,97){\scriptsize100}
        \put(-20,18){\rotatebox{90}{\small Relative Volume (\%)}}
        \put(6,85){\small Prostate}
    \end{overpic} \hspace{0.5em}
    \begin{overpic}[scale=0.4]{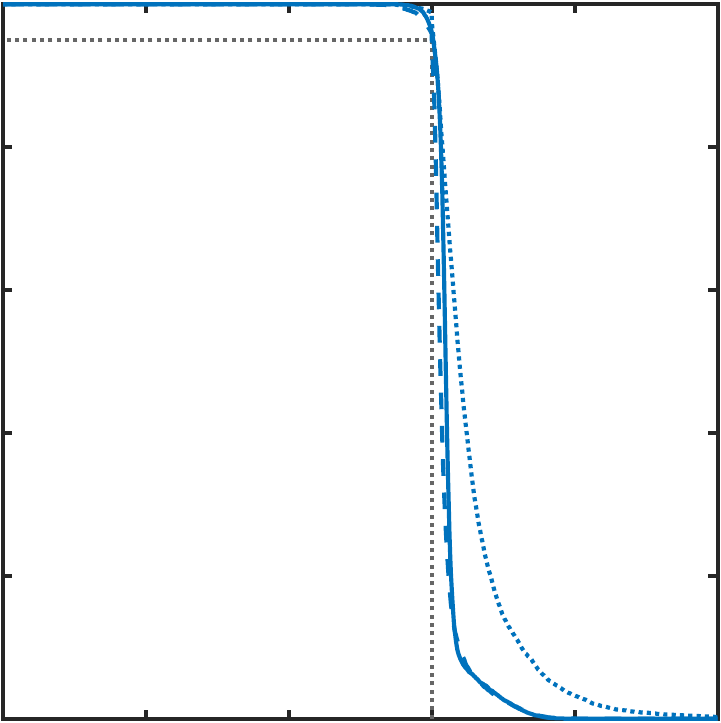}
        \put(6,85){\small Lymph Nodes}
        \put(111,90){\color{prostate}\bf-}
        \put(113.5,90){\color{prostate}\bf-}
        \put(116,90){\color{prostate}\bf-}
        \put(122,90){\scriptsize Solution w/ $(\mathcal{P}_2)^a$}
        \put(111,80){\color{prostate}\bf-}
        \put(116,80){\color{prostate}\bf-}
        \put(122,80){\scriptsize Solution w/ $(\mathcal{P}_2)^b$}
        \put(111,71){\color{prostate}\bf...}
        \put(122,70){\scriptsize Solution w/ \eqref{eq:init}}
        \put(119,55){\scriptsize$^a$Re-weighted}
        \put(119.5,45){\scriptsize$^b$Polished}
    \end{overpic} \\ \vspace*{2.5ex}
    \begin{overpic}[scale=0.4]{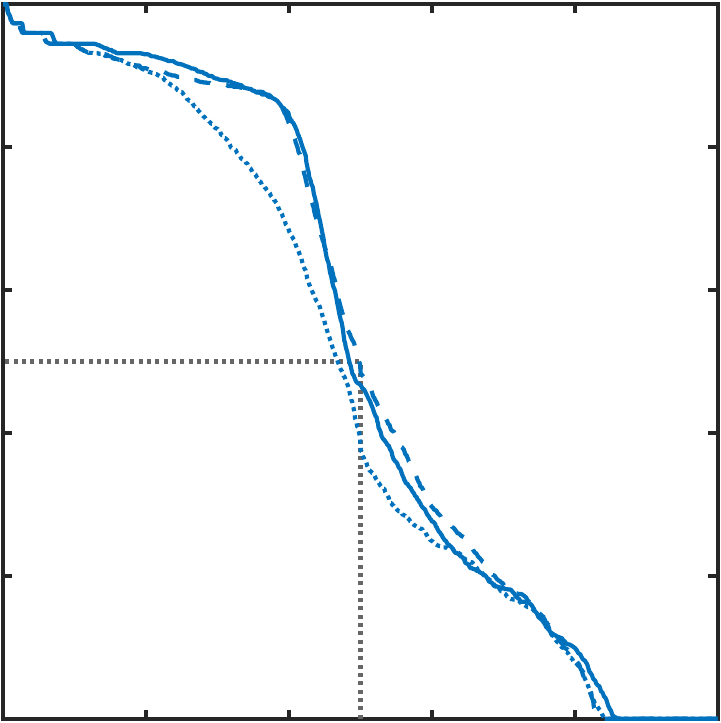}
        \put(-4,-1){\scriptsize0}
        \put(-7,18){\scriptsize20}
        \put(-7,38){\scriptsize40}
        \put(-7,58){\scriptsize60}
        \put(-7,78){\scriptsize80}
        \put(-10,97){\scriptsize100}
        \put(-20,18){\rotatebox{90}{\small Relative Volume (\%)}}
        \put(70,90){\small Rectum}
    \end{overpic} \hspace{0.5em}
    \begin{overpic}[scale=0.4]{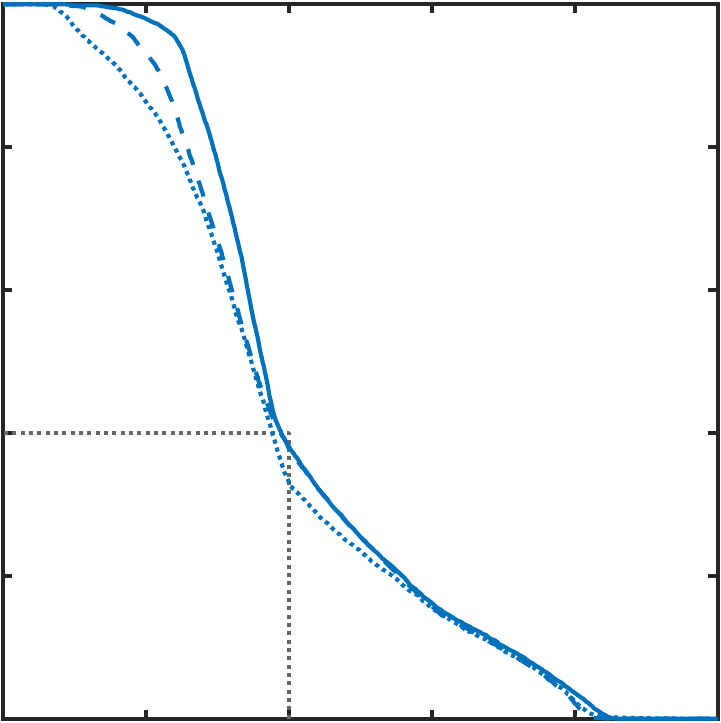}
        \put(70,90){\small Bladder}
    \end{overpic} \\ \vspace*{2.5ex} \hspace{0.1em}
    \begin{overpic}[scale=0.4]{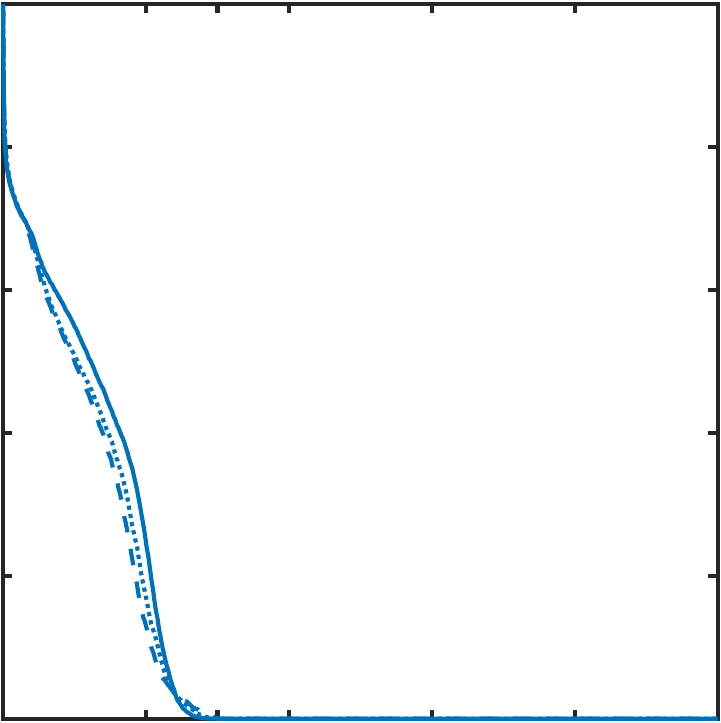}
        \put(-1,-5){\scriptsize0}
        \put(17,-5){\scriptsize20}
        \put(36.5,-5){\scriptsize40}
        \put(57,-5){\scriptsize60}
        \put(76,-5){\scriptsize80}
        \put(94,-5){\scriptsize100}
        \put(-4,-1){\scriptsize0}
        \put(-7,18){\scriptsize20}
        \put(-7,38){\scriptsize40}
        \put(-7,58){\scriptsize60}
        \put(-7,78){\scriptsize80}
        \put(-10,97){\scriptsize100}
        \put(35,-13){\small Dose (Gy)}
        \put(-20,18){\rotatebox{90}{\small Relative Volume (\%)}}
        \put(36,90){\small Left Femoral Head}
    \end{overpic} \hspace{0.5em}
    \begin{overpic}[scale=0.4]{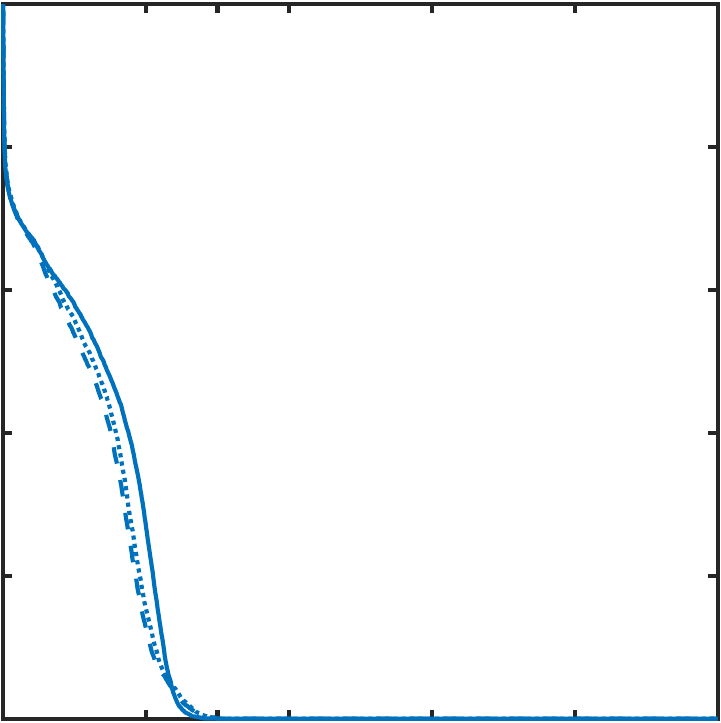}
        \put(-1,-5){\scriptsize0}
        \put(17,-5){\scriptsize20}
        \put(36.5,-5){\scriptsize40}
        \put(57,-5){\scriptsize60}
        \put(76,-5){\scriptsize80}
        \put(94,-5){\scriptsize100}
        \put(35,-13){\small Dose (Gy)}
        \put(31,90){\small Right Femoral Head}
    \end{overpic}
    \vspace{4ex}
    \caption{Dose--volume histograms for Section~\ref{sec:compareMultiple2} with dose--volume constraints.
    Top row: All solutions meet the lower dose--volume constraints on the PTVs.
    Middle and bottom rows: All solutions meet the upper dose--volume constraints on the OARs.\label{fig:dvh6b}}
\end{figure}

\end{document}